\numberwithin{equation}{section}
\numberwithin{figure}{section}
\newtheorem{theorem}{Theorem}[section]
\newtheorem{lemma}[theorem]{Lemma}
\newtheorem{proposition}[theorem]{Proposition}
\newtheorem{fact}[theorem]{Fact}
\newtheorem{remark}[theorem]{Remark}
\newtheorem{example}[theorem]{Example}
\newtheorem{question}[theorem]{Question}
\theoremstyle{definition}
\newtheorem{definition}[theorem]{Definition}
\newcommand\toe[2]{\mathcal{T}_{#1}(#2)}
\newcommand\head[2]{\mathcal{H}_{#1}(#2)}
\newcommand{\C}{{\mathbb{C}}}
\newcommand{\Z}{{\mathbb{Z}}}
\newcommand{\N}{{\mathbb{N}}}
\renewcommand{\t}{\mathfrak{t}}
\newcommand{\into}{\hookrightarrow}
\DeclareMathOperator{\Lie}{Lie}
\DeclareMathOperator{\pt}{pt}
\newcommand{\hsm}{{\hspace{1mm}}}
\definecolor{gold}{rgb}{0.85,.66,0}
\definecolor{cherry}{rgb}{0.9,.1,.2}
\definecolor{burgundy}{rgb}{0.8,.2,.2}
\definecolor{orangered}{rgb}{0.85,.3,0}
\definecolor{orange}{rgb}{0.85,.4,0}
\definecolor{olive}{rgb}{.45,.4,0}
\definecolor{lime}{rgb}{.6,.9,0}
\definecolor{green}{rgb}{.2,.7,0}
\definecolor{grey}{rgb}{.4,.4,.2}
\definecolor{brown}{rgb}{.4,.3,.1}
\DeclareMathOperator{\Hess}{Hess}
\newcommand{\Fill}{{\mathcal F}i \ell \ell} 
\newcommand{\PFill}{{\mathcal P}Fi \ell \ell}
\newcommand{\roll}{{ro\ell \ell}}
\newcommand{\Flags}{\mathcal{F}\ell ags}
\DeclareMathOperator{\Sym}{Sym}
\begin{document}

\title{Poset pinball, the dimension pair algorithm, 
and type $A$ regular nilpotent Hessenberg varieties}

\author{Darius Bayegan}
\address{Department of Pure Mathematics and Mathematical 
Statistics\\ Centre for Mathematical Sciences \\ Wilberforce Road \\
Cambridge CB3 0WA \\ United Kingdom} 

\author{Megumi Harada}
\address{Department of Mathematics and
Statistics\\ McMaster University\\ 1280 Main Street West\\ Hamilton, Ontario L8S4K1\\ Canada}
%\email{Megumi.Harada@math.mcmaster.ca}
%\urladdr{\url{http://www.math.mcmaster.ca/Megumi.Harada/}}
\thanks{The second author is partially supported by an NSERC Discovery Grant,
an NSERC University Faculty Award, and an Ontario Ministry of Research
and Innovation Early Researcher Award.}

\keywords{} 
\subjclass[2000]{Primary: 14M17; Secondary: 55N91}

\date{\today}

%%%%%%%%%%%%%%%%%%%%
% Disclaimer
%%%%%%%%%%%%%%%%%%%%
% \begin{center}
% \framebox{
% {\Large\bf DRAFT (\today): DO NOT DISTRIBUTE.}}
% \end{center}

%%%%%%%%%%%%%%%%%%%%%
%  Abstract
%%%%%%%%%%%%%%%%%%%%%

\begin{abstract}

  In this manuscript we develop the theory of \textbf{poset pinball},
  a combinatorial game recently introduced by Harada and Tymoczko for
  the study of the equivariant cohomology rings of GKM-compatible subspaces of GKM spaces.  Harada and Tymoczko also prove that in certain circumstances, a \textbf{successful outcome of Betti poset pinball} yields a module basis for the equivariant cohomology ring of the GKM-compatible subspace.
Our main
  contributions are twofold. First we construct an algorithm (which we
  call the \textbf{dimension pair algorithm}) which yields the result of a successful outcome of Betti poset pinball 
  for any type $A$
  regular nilpotent Hessenberg and any type $A$ nilpotent Springer
  variety, considered as GKM-compatible subspaces of the flag variety $\Flags(\C^n)$.  The definition of the algorithm is motivated by a
  correspondence between Hessenberg affine cells and certain Schubert
  polynomials which we learned from Erik Insko. Second, in the special case of the type $A$ regular nilpotent Hessenberg
  varieties specified by the Hessenberg function $h(1)=h(2)=3$ and
  $h(i) = i+1$ for $3 \leq i \leq n-1$ and $h(n)=n$, we prove that 
  the pinball result coming from the dimension pair algorithm is
 \textbf{poset-upper-triangular};
  by results of Harada and Tymoczko this implies the corresponding equivariant cohomology classes form a $H^*_{S^1}(\pt)$-module basis for the
  $S^1$-equivariant cohomology ring of the Hessenberg variety.

\end{abstract}

\maketitle

\setcounter{tocdepth}{1}
\tableofcontents

\section{Introduction}\label{sec:intro}

The purpose of this manuscript is to further develop the theory of 
\textbf{poset pinball}, a combinatorial game introduced in
\cite{HarTym10} for the purpose of computing in equivariant
cohomology rings,\footnote{All cohomology rings in this note are with
  $\C$ coefficients.} in certain cases of \textbf{type $A$ 
  nilpotent Hessenberg varieties}. One of the main uses of poset pinball in \cite{HarTym10} is to construct module bases for the equivariant cohomology rings of \textbf{GKM-compatible subspaces} of GKM spaces \cite[Definition 4.5]{HarTym10}. In the context of this manuscript, the ambient GKM space is the flag variety $\Flags(\C^n)$ equipped with the action of the diagonal subgroup $T$ of $U(n,\C)$, and the GKM-compatible subspaces are the nilpotent Hessenberg varieties. It is well-recorded in the literature (e.g. \cite{Tym05} and references therein) that GKM spaces often have geometrically and/or combinatorially natural module bases for their equivariant cohomology rings; the basis of equivariant Schubert classes $\{\sigma_w\}_{w \in S_n}$ for $H^*_T(\Flags(\C^n))$ is a famous example. The results of this manuscript represent first steps towards the larger goal of using poset pinball to construct a similarly computationally effective and convenient module bases for a GKM-compatible subspace by exploiting the structure of the ambient GKM space.

We briefly recall the setting of our results.  Let $N: \C^n \to \C^n$
be a nilpotent operator. Let
$h: \{1,2,\ldots, n\} \to \{1,2,\ldots,
n\}$ be a function satisfying $h(i) \geq i$ for all $1 \leq i \leq n$ and $h(i+1)
\geq h(i)$ for all $1 \leq i < n$. 
The associated Hessenberg variety $\Hess(N,h)$ is then 
defined as the following subvariety of
$\mathcal{F}\ell ags(\C^n)$: 
\[
\Hess(N,h) := \{ V_\bullet = (0 \subseteq V_1 \subseteq V_2 \subseteq \cdots
\subseteq V_{n-1} \subseteq V_n = \C^n) \hsm \mid \hsm  NV_i \subseteq
V_{h(i)} \textup{ for all } i = 1, \ldots, n \}. 
\]
Since we deal exclusively with type $A$ in this paper, henceforth we
omit this phrase from our terminology.  Two special cases of
Hessenberg varieties are of particular interest in this manuscript:
when $N$ is the principal nilpotent operator (in this case $\Hess(N,h)$ is
called a \textbf{regular nilpotent Hessenberg variety}) and when $h$
is the identity function $h(i)=i$ for all $1 \leq i \leq n$ (in this case
$\Hess(N,h)$ is called a \textbf{nilpotent Springer variety} and is
sometimes denoted $\mathcal{S}_N$).  Hessenberg varieties arise in many
areas of mathematics, including geometric representation theory
\cite{Spa76, Shi85, Fun03}, numerical analysis \cite{DeMProSha92},
mathematical physics \cite{Kos96, Rie03}, combinatorics \cite{Ful99},
and algebraic geometry \cite{BriCar04, CarrellKaveh:2008}, so it is of interest
to explicitly analyze their topology, e.g. the structure of their
(equivariant) cohomology rings. We do so through poset pinball and Schubert calculus techniques, as initiated and developed in \cite{HarTym09, HarTym10, BayHar10a} and briefly recalled below.

The following relationship between two group actions on the nilpotent Hessenberg variety and the flag variety respectively allows us to use the theory of GKM-compatible subspaces and poset pinball.
There is a natural $S^1$ subgroup
of the unitary diagonal matrices $T$ which acts on $\Hess(N,h)$ (defined precisely in
Section~\ref{sec:background}). The group $T$, the maximal torus of $U(n,\C)$, acts on $\Flags(\C^n)$ in the standard fashion. It turns out that the $S^1$-fixed points $\Hess(N,h)^{S^1}$ are a subset of the $T$-fixed points $\Flags(\C^n)^T \cong S_n$. 
Moreover, the inclusion of $\Hess(N,h)$
into $\Flags(\C^n)$ and the inclusion of groups $S^1$ into $T$ then induces a natural ring homomorphism
\begin{equation}\label{eq:intro-proj}
H^*_T(\mathcal{F}\ell ags(\C^n)) \to H^*_{S^1}(\Hess(N,h)).
\end{equation} 
As mentioned above, it is well-known in Schubert calculus that the equivariant Schubert classes $\{\sigma_w\}_{w \in S_n}$ are a computationally convenient $H^*_T(\pt)$-module basis for $H^*_T(\Flags(\C^n))$. 
We refer to 
the images in $H^*_{S^1}(\Hess(N,h))$ of the equivariant
Schubert classes $\{\sigma_w\}_{w \in S_n}$ via the projection~\eqref{eq:intro-proj} 
as \textbf{Hessenberg Schubert classes}. Given this setup and following \cite{HarTym10}, the game of poset pinball uses the data of  the fixed points $\Flags(\C^n)^T \cong S_n$ (considered as a partially ordered set with respect to Bruhat order) and the subset
\[
\Hess(N,h)^{S^1} \subseteq \Flags(\C^n)^T \cong S_n
\]
to determine a set of \textbf{rolldowns} in $S_n$. It is shown in \cite{HarTym10} that, under certain circumstances (one of which is discussed in more detail below), such a set of rolldowns in turn specifies a subset of the Hessenberg Schubert classes which form a $H^*_{S^1}(\pt)$-\textbf{module basis} of $H^*_{S^1}(\Hess(N,h))$. Thus poset pinball is an important tool for building computationally effective module bases for the equivariant cohomology of Hessenberg varieties. 
Indeed, the results of \cite{HarTym09} accomplish precisely this goal -- \emph{i.e.} of constructing a module basis via poset pinball techniques -- in the special case of the Peterson variety, which is the regular nilpotent Hessenberg variety with Hessenberg function $h$ defined by $h(i)=i+1$ for $1 \leq i \leq n-1$ and $h(n)=n$.  Exploiting this explicit module basis, in \cite[Theorem 6.12]{HarTym09} the second author and Tymoczko give a manifestly positive \textbf{Monk
  formula} for the product of a degree-$2$ Peterson Schubert class
with an arbitrary Peterson Schubert class, expressed as a
$H^*_{S^1}(\pt)$-linear combination of Peterson Schubert classes.
This is an example of equivariant
Schubert calculus in the realm of Hessenberg varieties, and it is an open problem to generalize the results of \cite{HarTym09} to a wider class of Hessenberg varieties. 

We now describe our main results.  First, we explain in detail an
algorithm which we dub the \textbf{dimension pair algorithm} and which
associates to each $S^1$-fixed point $w \in \Hess(N,h)^{S^1}$ a
permutation in $S_n$, which we call the \textbf{rolldown of $w$} following terminology in \cite{HarTym10} and denoted $\roll(w) \in S_n$.  In the special
cases of regular nilpotent Hessenberg varieties and nilpotent Springer
varieties, we show that the set $\{\roll(w)\}_{w \in
  \Hess(N,h)^{S^1}}$ can be interpreted as the result of a \textbf{successful game of 
  Betti pinball} (in the sense of \cite{HarTym10}). The
main motivation for our construction is that a successful outcome of Betti pinball can, under some circumstances, produce a module basis
for the associated equivariant cohomology ring (cf. \cite[Section
4.3]{HarTym10}). In this sense, our algorithm represents a significant
step towards the construction of module bases for the equivariant
cohomology rings of general nilpotent Hessenberg varieties,
thus extending the theory developed in \cite{HarTym09, HarTym10}. 
Although we formulate our
algorithm in terms of dimension pairs and permissible fillings
following terminology of 
Mbirika \cite{Mbirika:2010}, the essential idea
comes from a correspondence between Hessenberg affine cells and
certain Schubert polynomials which we learned from Erik Insko.

Second, for a specific case of a regular nilpotent Hessenberg
variety which we call a \textbf{$334$-type} Hessenberg
variety, we 
prove that the set of rolldowns $\{\roll(w)\}_{w \in \Hess(N,h)^{S^1}}$ obtained from the dimension pair algorithm is in fact
\textbf{poset-upper-triangular} in the sense of \cite{HarTym10}. As shown in \cite{HarTym10}, this is one of the possible circumstances under which we can conclude that the 
corresponding set of Hessenberg Schubert 
classes forms a module basis for the $S^1$-equivariant
cohomology ring of the variety. 
Thus our result gives rise to a new family of
examples of Hessenberg varieties (and GKM-compatible
subspaces) for which poset pinball successfully produces explicit
module bases. We mention that the dimension pair algorithm also produces
module bases in a special case of Springer varieties
\cite{DewHar10}. Although we do not know whether the dimension pair algorithm
always succeeds in producing module bases for the $S^1$-equivariant
cohomology rings for a general nilpotent Hessenberg variety, the 
evidence thus far is suggestive. 
We leave further investigation to future work.

We give a brief summary of the contents of this manuscript.  In
Section~\ref{sec:background} we recall some definitions and
constructions necessary for later statements. In
Section~\ref{subsec:dimension pair} we describe the
dimension pair algorithm
and prove that the result of the algorithm
satisfies the conditions to be the outcome of a successful game of
Betti poset pinball in the special cases of regular nilpotent
Hessenberg varieties and nilpotent Springer varieties. We briefly
review in Section~\ref{subsec:review pinball} the theory developed in
\cite{HarTym10} which show that, 
if the rolldown set obtained from a successful game of Betti poset pinball also satisfies
poset-upper-triangularity conditions, then it yields a module basis in equivariant
cohomology.  In Sections~\ref{sec:upper triangularity} and~\ref{sec:combinatorics} we prove that
the dimension pair algorithm produces a poset-upper-triangular module
basis in a special class of regular nilpotent Hessenberg varieties
which we call $334$-type Hessenberg varieties.  We close with some
open questions in
Section~\ref{sec:questions}.

\medskip
\noindent \textbf{Acknowledgements.} We thank Erik Insko for
explaining to us the correspondence between the elements of an affine
paving of regular nilpotent Hessenberg varieties and certain Schubert
polynomials which motivates our dimension 
pair algorithm. 
We
thank Barry Dewitt and Aba Mbirika for useful conversations and Rebecca Goldin for reviewing an initial draft of this manuscript and her many excellent suggestions for improving exposition. 
We are particularly indebted to Julianna
Tymoczko for her ongoing support, for answering many questions, and for her suggestions on an earlier draft of this paper. 
% Finally,
% we are grateful for the NSERC Undergraduate Research Student Award
% in the summer of 2010, which allowed the first author to engage in
% this undergraduate student research project under the supervision of
% the second author. 

\section{Background}\label{sec:background}

We begin with necessary definitions and terminology for what
follows. In Section~\ref{subsec:hessenbergs} we recall the geometric
objects and the group actions under consideration. In
Section~\ref{subsec:permissible} we recall some combinatorial
definitions associated to Young diagrams. We recall a bijection
between Hessenberg fixed points and certain fillings of Young diagrams
in Section~\ref{subsec:bijection}. The discussion closely follows
previous work (e.g. \cite{HarTym09, HarTym10} and also \cite{Tym06})
so we keep exposition brief.

\subsection{Hessenberg varieties, highest forms, and fixed
  points}\label{subsec:hessenbergs}

By the \textbf{flag variety} we mean the homogeneous space
$GL(n,\C)/B$ which is also identified with 
\[
\mathcal{F}\ell ags(\C^n) := 
\{ V_{\bullet} = (\{0\} \subseteq V_1 \subseteq V_2 \subseteq \cdots V_{n-1} \subseteq
V_n = \C^n) \hsm \mid \hsm \dim_{\C}(V_i) = i \}. 
\]
A \textbf{Hessenberg function} is a function
$h: \{1,2,\ldots, n\} \to \{1,2,\ldots,
n\}$ satisfying $h(i) \geq i$ for all $1 \leq i \leq n$ and $h(i+1)
\geq h(i)$ for all $1 \leq i < n$. We frequently denote a Hessenberg
function by listing its values in sequence, $h = (h(1), h(2), \ldots,
h(n)=n)$. 
Let $N: \C^n \to \C^n$ be a linear operator. 
The \textbf{Hessenberg variety} 
$\Hess(N,h)$ is defined as the following subvariety of 
$\mathcal{F}\ell ags(\C^n)$: 
\begin{equation}\label{eq:def-Hess}
\Hess(N,h) := \{ V_{\bullet}  \in \mathcal{F}\ell ags(\C^n) \;
\vert \;  N V_i \subseteq
V_{h(i)} \text{ for all } i=1,\ldots,n\} \subseteq \mathcal{F}\ell
ags(\C^n). 
\end{equation}
If $N$ is nilpotent, we say $\Hess(N,h)$ is a \textbf{nilpotent
  Hessenberg variety}, and if $N$ is the principal nilpotent operator
(i.e. has one Jordan block with eigenvalue $0$), then $\Hess(N,h)$ is
called a \textbf{regular nilpotent Hessenberg variety}. If $N$ is
nilpotent and $h$ is the
identity function $h(i)=i$ for all $1 \leq i \leq n$ then $\Hess(N,h)$
is called a \textbf{nilpotent Springer variety} and often denoted
$\mathcal{S}_N$.  In this manuscript we study in some detail the regular
nilpotent case, and as such sometimes notate $\Hess(N,h)$ as
$\Hess(h)$ when $N$ is understood to be the standard principal
nilpotent operator.

Suppose given $N$ a nilpotent matrix in standard Jordan canonical
form. It turns out that for many of our statements below we must use a 
choice of conjugate of $N$ which is in \textbf{highest
  form} \cite[Definition 4.2]{Tym06}. We recall the following. 

\begin{definition} \textbf{(\cite[Definition 4.1 and Definition
    4.2]{Tym06})}
\begin{itemize} 
\item Let $X$ be any $m \times n$ matrix . We call the entry $X_{ik}$ a 
\textbf{pivot} of $X$
if $X_{ik}$ is nonzero 
and if all entries below and to its left vanish, i.e.,
\(X_{ij} = 0\) if \(j < k\) and \(X_{jk} = 0\) if \(j > i.\) 
Moreover, given $i$, define $r_i$ to be the row of $X_{r_i,i}$ if the
entry is a pivot, and $0$ otherwise.

\item Let $N$ be an upper-triangular nilpotent $n \times n$ matrix.
Then we say $N$ is in \textbf{highest 
form} if its pivots form a nondecreasing sequence, namely 
\(r_1 \leq r_2 \leq \cdots \leq r_n.\) 
\end{itemize}
\end{definition}

We do not require the details of the theory of highest forms of linear
operators; for the purposes of the present manuscript
it suffices to remark firstly that when
$N$ is the principal nilpotent matrix then $N$ is already in highest
form, and secondly that any nilpotent matrix can be conjugated by an
appropriate $n \times n$ permutation matrix $\sigma$ so that $N_{hf} :=
\sigma N\sigma^{-1}$ is in highest form. However the following observation will be
relevant in Section~\ref{subsec:bijection}.

\begin{remark}\label{remark:choice of highest form} 
  In this manuscript we always assume that our highest form $N_{hf} =
  \sigma N \sigma^{-1}$
  has been chosen in accordance to the recipe described by Tymoczko in
  \cite[Section 4]{Tym06}. Since the precise method of this
  construction is not relevant for the rest of the present manuscript
  we omit further explanation here. In the
  case when $N$ is principal nilpotent we take $N_{hf} = N$ since $N$ is
  already in highest form and this is the form chosen by Tymoczko in
  \cite{Tym06}.  A more detailed discussion of highest forms as it
  pertains to poset pinball theory is in \cite{DewHar10}.
\end{remark}

For details on the following facts we refer the reader to
e.g. \cite{HarTym09, HarTym10, Tym06} and references therein. 
Let $N$ be an $n \times n$ nilpotent matrix in Jordan
canonical form and let $\sigma$ denote a permutation matrix such
that $N_{hf}:=\sigma N\sigma^{-1}$ is in highest form. 
It is known and straightforward to show that the following $S^1$ 
subgroup of $U(n,\C)$ preserves $\Hess(N,h)$ for $N$ as above and
any Hessenberg function $h$: 
\begin{equation}\label{eq:def-circle}
S^1  =   \left\{ \left. \begin{bmatrix} t^n & 0 & \cdots & 0 \\ 0 & t^{n-1} &  &
      0 \\ 0 & 0 & \ddots & 0 \\ 0 & 0 &  & t \end{bmatrix}
  \; \right\rvert \;  t \in \C, \; \|t\| = 1 \right\}  \subseteq T^n
\subseteq U(n,\C). 
\end{equation}
Here $T^n$ is the standard maximal torus of
$U(n,\C)$ consisting of diagonal unitary matrices.  

This implies that the conjugate circle subgroup $\sigma S^1 \sigma^{-1}$
preserves $\Hess(N_{hf},h)$. By abuse of notation we will
denote both circle subgroups by $S^1$, since it is clear by context
which is meant.  The $S^1$-fixed points of $\Hess(N,h)$ and
$\Hess(N_{hf}, h)$ are isolated, and are a subset of the $T^n$-fixed
points of $\Flags(\C^n)$. Since the set of
$T^n$-fixed points $\Flags(\C^n)^{T^n}$ may be identified with the
Weyl group $W = S_n$, and since $\Hess(N, h)^{S^1}$ (respectively
$\Hess(N_{hf}, h)^{S^1}$) is a subset of $\Flags(\C^n)^{T^n}$, any
Hessenberg fixed point may be thought of as a permutation \(w \in S_n.\)

\subsection{Permissible fillings, dimension pairs, lists of top 
parts, and associated permutations}\label{subsec:permissible}

Recall that there is a bijective correspondence between the set of conjugacy
classes of nilpotent $n \times n$ complex matrices $N$ and Young
diagrams\footnote{We use English notation for Young diagrams.} with $n$ boxes, given by associating to $N$ the Young diagram
$\lambda$ with row lengths the sizes of the Jordan blocks of
$N$ listed in weakly decreasing order.  We will use this bijection to
often treat such $N$ and $\lambda$ as the same data; we sometimes
denote by $\lambda_N$ the Young diagram given as above corresponding
to a nilpotent $N$.

For more details on the following see \cite{Mbirika:2010}.

\begin{definition}\label{definition:permissible filling}
  Let $\lambda$ be a Young diagram with $n$ boxes. Let $h: \{1,2,
  \ldots, n\} \to \{1,2,\ldots, n\}$ be a Hessenberg function. A
  \textbf{filling} of $\lambda$ by the alphabet $\{1,2,\ldots,n\}$ 
is an injective placing of the
  integers $\{1,2,\ldots, n\}$ into the boxes of $\lambda$.  A filling
  of $\lambda$ is called a \textbf{$\mathbf{(h,\lambda)}$-permissible
    filling} if for every horizontal adjacency \(\begin{array}{|c|c|}
    \cline{1-2} k & j \\ \cline{1-2} \end{array}\) in the filling we have \(k \leq
  h(j).\) 
\end{definition}

\begin{remark}
In this manuscript the $\lambda$ and
$h$ will frequently be understood by context. When there is no danger
of confusion we simply refer to \textbf{permissible fillings}. 
\end{remark}

\begin{example}\label{example:permissible filling}
Let $n=5$. Suppose $\lambda = (5)$ 
and $h=(3,3,4,5,5)$. Then
\(\begin{array}{|c|c|c|c|c|} \cline{1-5} 2 & 4 &  3 & 1 &5 \\
  \cline{1-5} \end{array}\) is a permissible filling, whereas 
\(\begin{array}{|c|c|c|c|c|} \cline{1-5} 2 & 3 & 4 & 1 &5 \\
  \cline{1-5} \end{array}\) is not, since $4 \not \leq h(1)$. 
\end{example}

We denote a permissible filling of $\lambda$ by $T$, in analogy with
standard notation for Young tableaux. 
Next we focus attention on certain pairs of entries in a permissible filling
$T$.

\begin{definition}\label{definition:dimension pair}
  Let $h: \{1,2,\ldots,n\} \to \{1,2,\ldots,n\}$ be a Hessenberg
  function and $\lambda$ a Young diagram with $n$ boxes.  A pair
  $(a, b)$ is a \textbf{dimension pair} of an
  $(h,\lambda)$-permissible filling $T$ of $\lambda$ if the following
  conditions hold:
\begin{enumerate} 
\item \(b > a, \)
\item $b$ is either 
\begin{itemize}
\item below $a$ in the same column of $a$, or 
\item anywhere in a column strictly to the left of the column of $a$, 
\end{itemize} 
and 
\item if there exists a box with filling $c$ directly adjacent to the right
of $a$, then $b \leq h(c)$. 
\end{enumerate}
For a dimension pair $(a,b)$ of $T$, we will refer to $b$ as the
\textbf{top part} of the dimension pair. 
\end{definition}

\begin{example}\label{example:dimension pair} 
Let $\lambda, h$ be as in Example~\ref{example:permissible
  filling}. The 
dimension pairs in the permissible filling
\(\begin{array}{|c|c|c|c|c|} \cline{1-5} 2 & 4 &  3 & 1 &5 \\
  \cline{1-5} \end{array}\)
are $(1,2), (1,3)$, and $(1,4)$. Note that $(3,4)$ is not a dimension
pair because $1$ is directly to the right of the $3$ and $4 \not \leq
h(1)$. 
\end{example}

Given a permissible filling $T$ of $\lambda$, we follow \cite{Mbirika:2010}
and denote by $DP^T$ the set of dimension pairs of $T$. 
For each integer $\ell$ with $2 \leq \ell \leq n$, let 
\begin{equation}\label{eq:def xell}
x_\ell := \lvert \{ (a,\ell) \hsm \vert \hsm (a,\ell) \in DP^T \} \rvert
\end{equation}
so $x_\ell$ is the number of times $\ell$ occurs as a top part in
the set of dimension pairs of $T$. From the definitions
it follows that \(0 \leq x_\ell \leq \ell-1 \textup{ for all } 2 \leq \ell \leq n.\)
We call the integral vector 
$\mathbf{x} = (x_2, x_3, \ldots, x_n)$ the \textbf{list of top parts}
of $T$.

To each such $\mathbf{x}$ we associate a permutation in $S_n$ as
follows. As a preliminary step, for each $\ell$ with $2 \leq \ell \leq n$ define 
\[
u_{\ell}(\mathbf{x}) := \begin{cases} s_{\ell-1} s_{\ell-2} \cdots s_{\ell-x_\ell}
  \quad \textup{ if } x_\ell > 0 \\
1 \quad \textup{ if } x_\ell = 0 
\end{cases} 
\]
where $s_i$ denotes the simple transposition $(i,i+1)$ in $S_n$ and
$1$ denotes the identity permutation. Now
define the association 
\begin{equation}\label{eq:permutation from top parts} 
\mathbf{x} \mapsto  \omega(\mathbf{x}) := u_2(\mathbf{x}) u_3(\mathbf{x})
\cdots u_n(\mathbf{x}) \in S_n.
\end{equation}
It is not difficult to see that~\eqref{eq:permutation from top parts}
is a bijection between the set of integral vectors $\mathbf{x} \in
\Z^{n-1}$ satisfying $0 \leq x_{\ell} \leq \ell-1$ for all $2 \leq
\ell \leq n-1$ and the group $S_n$. In fact the word given
by~\eqref{eq:permutation from top parts} is a reduced word
decomposition of $\omega(\mathbf{x})$ and the $x_\ell$ count the number of
inversions in $\omega(\mathbf{x})$ with $\ell$ as the higher integer. 
The following simple fact will be used later. 

\begin{fact}\label{fact:order}
  Suppose $\mathbf{x} = (x_2, \ldots, x_n), \mathbf{y} = (y_2,
  \ldots,y_n) \in \Z^{n-1}_{\geq 0}$ are both lists of top
  parts. Suppose further that for all $2 \leq \ell \leq n$, we have
  $x_\ell \leq y_\ell$. Then $\omega(\mathbf{x}) \leq
  \omega(\mathbf{y})$ in Bruhat order. This follows immediately from
  the definition~\eqref{eq:permutation from top parts}. 
\end{fact}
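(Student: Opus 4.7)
The plan is to deduce the Bruhat inequality directly from the subword characterization of Bruhat order (Tits' theorem): for elements $u,v$ of a Coxeter group, $u \leq v$ if and only if some reduced expression for $u$ appears as a subword of some reduced expression for $v$. Given this tool, the statement should reduce to a purely syntactic comparison of the explicit words defining $\omega(\mathbf{x})$ and $\omega(\mathbf{y})$.

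First I would invoke the assertion, already made in the paragraph preceding the statement, that the concatenated expression
\[
\omega(\mathbf{x}) = u_2(\mathbf{x})\, u_3(\mathbf{x}) \cdots u_n(\mathbf{x}) = \bigl(s_1 s_0 \cdots s_{2-x_2}\bigr)\bigl(s_2 s_1 \cdots s_{3-x_3}\bigr) \cdots \bigl(s_{n-1} s_{n-2} \cdots s_{n-x_n}\bigr)
\]
is a reduced word for $\omega(\mathbf{x})$ of length $\sum_{\ell=2}^{n} x_\ell$, with the convention that the $\ell$-th block is empty when $x_\ell = 0$. An analogous reduced word presents $\omega(\mathbf{y})$.

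Next I would compare the two words block by block. The hypothesis $x_\ell \leq y_\ell$ translates to $\ell - x_\ell \geq \ell - y_\ell$, so the descending sequence of simple reflections $s_{\ell-1}, s_{\ell-2}, \ldots, s_{\ell-x_\ell}$ that spells out $u_\ell(\mathbf{x})$ is literally an initial segment of the sequence $s_{\ell-1}, s_{\ell-2}, \ldots, s_{\ell-y_\ell}$ that spells out $u_\ell(\mathbf{y})$. Concatenating these segment-inclusions across $\ell = 2, 3, \ldots, n$ exhibits the reduced word for $\omega(\mathbf{x})$ displayed above as a subword of the corresponding reduced word for $\omega(\mathbf{y})$. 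The subword characterization of Bruhat order then yields $\omega(\mathbf{x}) \leq \omega(\mathbf{y})$.

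There is no genuine obstacle here — the authors themselves signal that the result is immediate — and the only point that deserves a sentence of justification is the appeal to the subword criterion, together with the already-recorded fact that the concatenation $u_2(\mathbf{x}) \cdots u_n(\mathbf{x})$ is reduced. Everything else is bookkeeping about prefixes of descending strings of adjacent simple reflections.
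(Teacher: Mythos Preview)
Your argument is correct and is exactly the reasoning the paper has in mind when it says the fact ``follows immediately from the definition'': the explicit word $u_2(\mathbf{x})\cdots u_n(\mathbf{x})$ is reduced, each block $u_\ell(\mathbf{x})$ is a prefix of $u_\ell(\mathbf{y})$ when $x_\ell \leq y_\ell$, and the subword criterion for Bruhat order finishes it. One cosmetic point: in your displayed equation the first block should read $(s_1 \cdots s_{2-x_2})$ with $x_2 \in \{0,1\}$, so the symbol $s_0$ should not appear.
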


\begin{example}
Continuing with Examples~\ref{example:permissible filling}
and~\ref{example:dimension pair}, for the permissible filling \(\begin{array}{|c|c|c|c|c|} \cline{1-5} 2 & 4 &  3 & 1 &5 \\
  \cline{1-5} \end{array}\) the set $DP^T$ of top parts of dimension
pairs is $\{2,3,4\}$, yielding the integer vector $\mathbf{x} =
(1,1,1,0)$. The associated permutation $\omega(\mathbf{x})$ is then \(s_1 s_2 s_3.\) 
\end{example}

\begin{example}\label{example:associated perm}
  Let $\lambda, h$ be as in Example~\ref{example:permissible
    filling}. The filling \(\begin{array}{|c|c|c|c|c|} \cline{1-5} 4 & 3 &  2 & 1 &5 \\
  \cline{1-5} \end{array}\) is also permissible, with dimension pairs
$(1,2),(1,3),(1,4),(2,3)$. Hence $\mathbf{x} = (1,2,1,0)$ 
and the associated permutation $\omega(\mathbf{x})$ is $s_1 (s_2 s_1) s_3$. 
\end{example}

\subsection{Bijection between fixed points and permissible fillings}\label{subsec:bijection}

For nilpotent Hessenberg varieties, the $S^1$-fixed points
$\Hess(N,h)^{S^1}$ are in bijective correspondence with the set of
permissible fillings of the Young diagram $\lambda = \lambda_N$, as we
now describe.  We
will use this correspondence in the formulation of our dimension pair
algorithm.

Suppose $\lambda$ is a Young diagram with $n$ boxes. We begin by defining a bijective correspondence between the set
$\Fill(\lambda)$ of \emph{all} fillings (not necessarily permissible)
of $\lambda$ with permutations in $S_n$. Given a filling, read the
entries of the filling by reading along each column from the bottom to
the top, starting with the leftmost column and proceeding to the
rightmost column. The association $\Fill(\lambda) \leftrightarrow S_n$
is then given by interpreting the resulting word as the one-line notation
of a permutation. For example the filling
\[
T = {\def\lr#1{\multicolumn{1}{|@{\hspace{.6ex}}c@{\hspace{.6ex}}|}{\raisebox{-.3ex}{$#1$}}}
\raisebox{-.6ex}{$\begin{array}[b]{ccc}
\cline{1-1}\cline{2-2}\cline{3-3}
\lr{1}&\lr{2}&\lr{3}\\
\cline{1-1}\cline{2-2}\cline{3-3}
\lr{4}&\lr{5}\\
\cline{1-1}\cline{2-2}
\lr{6}\\
\cline{1-1}
\end{array}$}
}
\]
has associated permutation $641523$. It is easily seen that this is a
bijective corresondence. Given a filling $T$ of $\lambda$ we denote
its associated permutation by $\phi_\lambda(T)$.

\begin{remark}\label{remark:regular case}
  In the case when $N$ is the principal nilpotent $n \times n$ matrix,
  the corresponding Young diagram $\lambda = \lambda_N = (n)$ has only
  one row, so the above correspondence simply reads off the (one row
  of the) filling from left to right. In this case we abuse
  notation and denote $\phi_\lambda^{-1}(w)$ by just $w$. For instance, the
  permissible filling of $\lambda=(5)$ in
  Example~\ref{example:associated perm} has associated permutation
  $43215$.
\end{remark}

Now let 
\begin{equation}\label{eq:def PFill} 
\PFill(\lambda, h)
\end{equation}
denote the set of $(h,\lambda)$-permissible fillings of
$\lambda$. Recall that elements in $\Hess(N,h)^{S^1}$ are viewed as
permutations in $S_n$ via the identification $\Flags(\C^n)^{T^n} \cong
S_n$.  The next proposition follows from the definitions and some
linear algebra. It is proven and discussed in more detail in
\cite{DewHar10}, where the notation used is slightly different.

\begin{proposition}\label{proposition:fixed points and fillings}
  Fix $n$ a positive integer.  Let $h: \{1,2,\ldots, n\} \to
  \{1,2,\ldots, n\}$ be a Hessenberg function and $\lambda$ a Young
  diagram with $n$ boxes. Suppose $N_{hf}$ is a nilpotent
  operator in highest form as chosen in \cite{Tym06}
  (cf. Remark~\ref{remark:choice of highest form}) with
  $\lambda_{N_{hf}} = \lambda$. Let
  $\Hess(N_{hf}, h)$ denote
  the associated nilpotent Hessenberg variety. Then the map
  from the $S^1$-fixed points $\Hess(N_{hf}, h)^{S^1}$ to the
  set of permissible fillings $\PFill(\lambda, h)$ 
\begin{equation}\label{eq:def PF_w}
w  \in \Hess(h)^{S^1} \subseteq S_n \mapsto \phi_{\lambda}^{-1}(w^{-1}) \in \PFill(\lambda,h)
\end{equation}
is well-defined and is a bijection. 
\end{proposition}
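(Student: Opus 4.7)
The plan is to prove the proposition by translating the algebraic Hessenberg condition $N_{hf} V_i \subseteq V_{h(i)}$ into a combinatorial condition on the associated filling, exploiting the particular shape of $N_{hf}$ dictated by Tymoczko's highest form recipe. First I would recall that a $T^n$-fixed flag corresponds to a permutation $w \in S_n$ via $V_i^w = \mathrm{span}(e_{w(1)}, \ldots, e_{w(i)})$, so checking whether $V_\bullet^w \in \Hess(N_{hf},h)$ reduces to checking, for each index $j \in \{w(1), \ldots, w(i)\}$, whether $N_{hf} e_j$ lies in $\mathrm{span}(e_{w(1)}, \ldots, e_{w(h(i))})$. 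Equivalently, if $N_{hf} e_j = \sum_k c_k e_k$ with $c_k \neq 0$, we need $w^{-1}(k) \leq h(w^{-1}(j))$ for every such $k$.

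Next I would unpack the action of $N_{hf}$ on basis vectors in terms of the Young diagram $\lambda = \lambda_{N_{hf}}$. Under the highest form recipe chosen in \cite{Tym06} (cf. Remark~\ref{remark:choice of highest form}), the $n$ standard basis vectors are organized so that the rows of $\lambda$ correspond to Jordan chains of $N_{hf}$. The reading convention for $\phi_\lambda$ — bottom-to-top within each column, left-to-right across columns — is precisely the one that makes the position of entry $j$ in $T := \phi_\lambda^{-1}(w^{-1})$ record the flag index $w^{-1}(j)$ at which $e_j$ first appears, and such that a horizontal adjacency $\begin{array}{|c|c|}\hline k & j \\ \hline\end{array}$ in $T$ corresponds exactly to the relation $N_{hf} e_j = e_k + (\text{terms in columns strictly left of } k)$. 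This is the step where Tymoczko's specific choice of highest form is essential: a different conjugate would destroy this clean column-by-column correspondence.

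With this dictionary in hand, the Hessenberg condition at the fixed point $w$ becomes, for each horizontal adjacency in $T$, the inequality $w^{-1}(k) \leq h(w^{-1}(j))$ together with analogous inequalities for the lower-order terms (which correspond to boxes further left in the same column, handled automatically by lower position indices). Translating through $\phi_\lambda^{-1}$ these inequalities become precisely the permissibility condition $k \leq h(j)$ of Definition~\ref{definition:permissible filling}. Well-definedness then holds, and bijectivity is immediate: $\phi_\lambda$ is already a bijection $\Fill(\lambda) \to S_n$, so once the image of $\Hess(N_{hf},h)^{S^1}$ under $w \mapsto \phi_\lambda^{-1}(w^{-1})$ is identified with $\PFill(\lambda,h)$, the restriction is a bijection.

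The main obstacle I anticipate is carrying out the combinatorial bookkeeping in the non-principal case, where $\lambda$ has several rows and $N_{hf}$ is not a simple shift: one must verify that Tymoczko's recipe really does place the basis so that columns of $\lambda$ encode the supports of $N_{hf} e_j$, and that the definition of horizontal adjacency captures all relevant linear constraints (no other constraints are introduced by the lower-order terms). For the principal nilpotent case this is transparent — $\lambda = (n)$, $N_{hf} e_j = e_{j-1}$, and the argument reduces to the single implication $w^{-1}(j-1) \leq h(w^{-1}(j))$ — so the real work lies in generalizing the bookkeeping while respecting the conventions of \cite{Tym06}.
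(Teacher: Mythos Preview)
The paper does not actually supply a proof of this proposition: immediately before the statement it says only that the result ``follows from the definitions and some linear algebra'' and that it ``is proven and discussed in more detail in \cite{DewHar10}.'' Your outline---translate $N_{hf}V_i\subseteq V_{h(i)}$ at a coordinate flag into inequalities $w^{-1}(k)\le h(w^{-1}(j))$ and then match these against the permissibility condition via $\phi_\lambda$---is exactly the ``definitions and linear algebra'' argument the paper alludes to, and you correctly flag that the only nontrivial part is verifying that Tymoczko's specific highest-form conjugate aligns the Jordan chains with the columns of $\lambda$ under the given reading convention.

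One indexing slip to be aware of when you flesh this out: in $T=\phi_\lambda^{-1}(w^{-1})$ the box at reading position $p$ carries the entry $w^{-1}(p)$, so the \emph{position} of entry $j$ is $w(j)$, not $w^{-1}(j)$. The correct dictionary is that basis vectors $e_p$ correspond to \emph{boxes} (positions) of $\lambda$, while the filling records, in box $p$, the flag index $w^{-1}(p)$ at which $e_p$ first enters $V_\bullet^w$. Thus a horizontal adjacency of \emph{entries} $k\,|\,j$ comes from adjacent \emph{boxes} $p'\,|\,p$ with $w^{-1}(p')=k$, $w^{-1}(p)=j$, and the Hessenberg constraint $N_{hf}e_p\in V_{h(w^{-1}(p))}$ yields $w^{-1}(p')\le h(w^{-1}(p))$, i.e.\ $k\le h(j)$. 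Your principal-nilpotent sanity check already witnesses this; just make sure the general bookkeeping (and the statement ``$N_{hf}e_j=e_k+\cdots$'') is phrased in terms of box positions rather than filling entries.
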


\begin{remark}\label{remark:PF_w in case N principal}
  In the case when $N$ is the principal nilpotent $n \times n$ matrix,
  $\lambda$ is the Young diagram with only one row. Thus the
  map~\eqref{eq:def PF_w} above simplifies to $w \mapsto w^{-1}$ where
  we abuse notation (cf. Remark~\ref{remark:regular case}) and denote
  $\phi_{\lambda}^{-1}(w^{-1})$ by $w^{-1}$. 
\end{remark}

\section{The dimension pair algorithm for Betti poset
  pinball for nilpotent Hessenberg varieties}\label{sec:aba}

In this section we first explain the \textbf{dimension pair
  algorithm} which associates to any Hessenberg fixed point a
permutation in $S_n$.  The name is due to the fact that the
construction proceeds by computing dimension pairs in appropriate
permissible fillings. We then interpret this algorithm as a method
for choosing \textbf{rolldowns} associated to the Hessenberg fixed
points in a game of \textbf{Betti poset pinball} in the
sense of \cite{HarTym10}.  The algorithm makes sense for any nilpotent
Hessenberg variety, so it is defined in that generality in
Section~\ref{subsec:dimension pair}. However, our proof that the
algorithm produces a successful outcome of Betti poset pinball in
the sense of \cite{HarTym10} is only for the special cases of regular
nilpotent Hessenberg varieties and nilpotent Springer varieties.  In
Section~\ref{subsec:review pinball} we briefly recall the setup and
necessary results of poset pinball which allow us to conclude that our
poset pinball result yields an explicit module basis for
equivariant cohomology.

\subsection{The dimension pair algorithm for nilpotent Hessenberg
  varieties}\label{subsec:dimension pair}

Let $N_{hf}$ be a nilpotent $n \times n$
matrix in highest form chosen as in Remark~\ref{remark:choice of
  highest form} and let $\lambda := \lambda_{N_{hf}}$. 
Let $h: \{1,2, \ldots, n\} \to \{1,2,\ldots, n\}$
be a Hessenberg function and $\Hess(N_{hf},h)$ the corresponding
nilpotent Hessenberg variety.

The definition of the dimension pair algorithm is pure 
combinatorics. It produces for each Hessenberg fixed point $w \in
\Hess(N_{hf}, h)^{S^1}$ an element in $S_n$. Following terminology of poset
pinball, we denote this function by 
\[
\roll: \Hess(N_{hf}, h)^{S^1} \to S_n.
\]

\begin{definition} \textbf{(``The dimension pair algorithm'')} 
We define $\roll: \Hess(N_{hf}, h)^{S^1} \to S_n$ as follows:  
\begin{enumerate}
\item Let $w \in \Hess(N_{hf}, h)^{S^1}$ and let
  $\phi_{\lambda}^{-1}(w^{-1})$ be its corresponding permissible filling as
  defined in~\eqref{eq:def PF_w}.
\item Let $DP^{\phi_{\lambda}^{-1}(w^{-1})}$ be the set of dimension pairs in the permissible
  filling $\phi_{\lambda}^{-1}(w^{-1})$. 
\item For each $\ell$ with $2 \leq \ell \leq n$, 
set 
\[
x_\ell := \lvert \{  (a,\ell) \hsm \vert \hsm (a, \ell) \in DP^{\phi_{\lambda}^{-1}(w^{-1})} \} \rvert
\]
as in~\eqref{eq:def xell} and define \(\mathbf{x} := (x_2, \ldots, x_n).\) 
\item Define $\roll(w) := (\omega(\mathbf{x}))^{-1}$ where $\omega(\mathbf{x})$
  is the permutation associated to the integer vector $\mathbf{x}$
  defined in~\eqref{eq:permutation from top parts}.
\end{enumerate}
\end{definition}

\begin{example}
  Let $\lambda, h$ be as in Example~\ref{example:permissible
    filling}. The permutation $w = 43215 \in S_n$
  is in $\Hess(N_{hf}, h)^{S^1}$, as can be
  checked. The associated
  permissible filling is \(\begin{array}{|c|c|c|c|c|} \cline{1-5} 4 & 3 &  2 & 1 &5 \\
  \cline{1-5} \end{array}.\) In Example~\ref{example:associated
  perm} we saw that the associated permutation is $s_1 (s_2 s_1)
s_3$, so we conclude $\roll(w) = s_3 (s_1 s_2) s_1$. 
\end{example}

We next show that the rolldown function $\roll: \Hess(h)^{S^1} \to S_n$
defined by the dimension pair algorithm above satisfies the conditions
to be a \textbf{successful outcome of Betti poset pinball} as in
\cite{HarTym10} in certain cases of nilpotent Hessenberg varieties. 
The statement of one of the conditions requires advance knowledge of the
Betti numbers of nilpotent Hessenberg varieties, for which we recall the
following result (reformulated in our language) from \cite{Tym06}.  

\begin{theorem}\label{theorem:paving} \textbf{(\cite[Theorem
    1.1]{Tym06})}
Let 
$N_{hf}: \C^n \to \C^n$ be a nilpotent matrix 
in highest form chosen as in Remark~\ref{remark:choice of highest
  form} and let $\lambda := \lambda_{N_{hf}}$. Let $h:
\{1,2,\ldots,n\} \to \{1,2,\ldots,n\}$ be a Hessenberg function and
let $\Hess(N_{hf}, h)$ denote the corresponding nilpotent Hessenberg
variety. 
There is a paving by (complex) affine cells of $\Hess(N_{hf},h)$ 
such that: 
  \begin{itemize}
  \item the affine cells are in one-to-one correspondence with
    $\Hess(N_{hf},h)^{S^1}$, and
 \item the (complex) dimension of the affine cell $C_w$ corresponding to a
    fixed point \(w \in \Hess(N,h)^{S^1}\) is 
\begin{equation}\label{eq:dim C_w}
\dim_{\C}(C_w) =  \lvert DP^{\phi_{\lambda}^{-1}(w^{-1})} \rvert.
\end{equation}
\end{itemize}
\end{theorem}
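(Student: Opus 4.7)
The plan is to derive the statement directly from Tymoczko's original affine paving theorem \cite[Theorem 1.1]{Tym06}, whose conclusion we need to rewrite in the combinatorial language of dimension pairs introduced in Section~\ref{subsec:permissible}. First I would import Tymoczko's paving construction verbatim: she intersects the Bruhat cells of $\Flags(\C^n)$ with $\Hess(N_{hf},h)$ and exhibits explicit affine coordinates on each nonempty intersection, showing that the resulting cells are in bijection with the fixed points $w \in \Hess(N_{hf},h)^{S^1}$. This immediately gives the first two bullet points of the theorem with no additional work.

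The second step is to match Tymoczko's dimension formula against $\lvert DP^{\phi_\lambda^{-1}(w^{-1})} \rvert$. In \cite{Tym06} the complex dimension of $C_w$ is computed by counting a distinguished set of inversion-like pairs $(i,j)$ associated to $w$ and $h$, arising from the free parameters in her explicit parametrization of $C_w$. Using Proposition~\ref{proposition:fixed points and fillings} to pass from the fixed point $w$ to the permissible filling $T := \phi_\lambda^{-1}(w^{-1})$, each such pair $(i,j)$ translates to a pair of entries $(a,b)$ of $T$: the numerical ordering becomes the condition $b > a$, the condition on matrix positions becomes the "below, or strictly to the left" condition of Definition~\ref{definition:dimension pair}(2), and the Hessenberg inequality becomes condition (3) involving the entry $c$ directly to the right of $a$.

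The main obstacle is purely combinatorial bookkeeping. One must carefully trace how the column-reading convention defining $\phi_\lambda$ converts Tymoczko's inversion data into the geometric conditions of Definition~\ref{definition:dimension pair}, and in particular verify that her Hessenberg constraint reduces precisely to the "adjacent column" condition (3). This is where Remark~\ref{remark:choice of highest form} plays a role, since the exact form of $N_{hf}$ determined by Tymoczko's recipe is what makes the combinatorial match work. As a sanity check, the regular nilpotent case $\lambda = (n)$ (where the filling is simply the one-line notation of $w^{-1}$ by Remark~\ref{remark:PF_w in case N principal}) and the Springer case (where $\lambda$ has one column and condition (3) is essentially vacuous because no box lies directly to the right of $a$) can each be verified directly. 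The general case then follows by reading off dimension pairs column by column and pairing them, in order, with Tymoczko's inversions.
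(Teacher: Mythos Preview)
The paper does not actually give a proof of this theorem: it is stated as a direct citation of \cite[Theorem 1.1]{Tym06}, with the parenthetical remark ``(reformulated in our language)'' acknowledging that the dimension formula has been rewritten in terms of dimension pairs. So there is no paper proof to compare against; the authors treat the translation as known.

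Your proposal is the right way to justify that translation, and your outline is accurate: Tymoczko's paving comes from intersecting Schubert cells with $\Hess(N_{hf},h)$, and her cell-dimension count is an inversion-type count that one must match with $\lvert DP^{\phi_\lambda^{-1}(w^{-1})}\rvert$. The one thing worth noting is that this matching is not new to the present paper; the dimension pair formalism and its agreement with Tymoczko's cell dimensions is exactly what Mbirika sets up in \cite{Mbirika:2010}, and the authors are implicitly relying on that. So rather than redoing the bookkeeping yourself, you could simply cite \cite{Mbirika:2010} for the identification of Tymoczko's dimension count with the number of dimension pairs of the associated permissible filling. Your sanity checks in the regular nilpotent and Springer cases are correct and are indeed the two cases where the translation is most transparent.
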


In particular, Theorem~\ref{theorem:paving} implies that the odd Betti
numbers of $\Hess(N_{hf},h)$ are $0$, and the $2k$-th even Betti
number is precisely the number of fixed points $w$ in
$\Hess(N_{hf},h)^{S^1}$ such that $\lvert DP^{\phi_{\lambda}^{-1}(w^{-1})}
\rvert = k$.  Given the regular nilpotent Hessenberg variety
$\Hess(N_{hf}, h)$, denote by $b_k$ its $2k$-th Betti number, i.e.
\[
b_k := \dim_\C H^{2k}(\Hess(N_{hf},h)).
\]
We may now formulate the conditions that guarantee that $\roll:
\Hess(N_{hf}, h)^{S^1} \to S_n$ is a successful outcome of Betti
pinball. For more details we refer the reader to 
\cite[Section 3]{HarTym10}. It suffices to check the
following: 

\begin{enumerate}
\item $\roll: \Hess(N_{hf},h)^{S^1} \to S_n$ is injective, 
\item for every $w \in \Hess(N_{hf}, h)^{S^1}$, we have $\roll(w) \leq w$ in
  Bruhat order, and 
\item for every $k \geq 0, k \in \Z$, we have 
\[
b_k = \left\lvert \left\{\roll(w) \hsm \vert \hsm  w \in \Hess(N_{hf},
    h)^{S^1} \textup{ with }
\ell(\roll(w)) = k \right\} \right\rvert
\]
where $\ell(\roll(w))$ denotes the Bruhat length of $\roll(w) \in
S_n$. 
\end{enumerate}

We prove each claim in turn. For the first assertion we restrict to
two special cases of Hessenberg varieties. 

\begin{lemma}\label{lemma:injective} 
Suppose that $\Hess(N_{hf},h)$ is either a regular nilpotent
Hessenberg variety or a nilpotent Springer variety. 
Then the function $\roll: \Hess(N_{hf}, h)^{S^1} \to S_n$ is injective. 
\end{lemma}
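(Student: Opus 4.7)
The plan is to factor $\roll$ through a chain of bijections and thereby reduce injectivity to a purely combinatorial statement about permissible fillings. Explicitly, the construction of $\roll$ decomposes as
\[
w \ \longmapsto \ T_w := \phi_\lambda^{-1}(w^{-1}) \ \longmapsto \ \mathbf{x}(T_w) \ \longmapsto \ \omega(\mathbf{x}(T_w))^{-1} = \roll(w).
\]
The first arrow is a bijection from $\Hess(N_{hf}, h)^{S^1}$ onto $\PFill(\lambda, h)$ by Proposition~\ref{proposition:fixed points and fillings}, the map $\omega$ is a bijection from the set of integer vectors $\mathbf{x}$ with $0 \leq x_\ell \leq \ell-1$ onto $S_n$ via the reduced-word formula~\eqref{eq:permutation from top parts}, and inversion is a bijection of $S_n$. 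Hence injectivity of $\roll$ is equivalent to injectivity of the combinatorial map $T \mapsto \mathbf{x}(T)$ on $\PFill(\lambda, h)$.

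I would then handle the two special cases in parallel by giving an explicit combinatorial reconstruction of $T$ from $\mathbf{x}(T)$. In the regular nilpotent case, $\lambda = (n)$ is a single row, so $T$ is just a permutation $a_1 \cdots a_n$ satisfying the Hessenberg constraint $a_i \leq h(a_{i+1})$, and $x_\ell$ counts values $a < \ell$ to the right of $\ell$ whose right-neighbor $c$ (if any) satisfies $h(c) \geq \ell$. In the nilpotent Springer case ($h(i)=i$), permissibility forces the rows of $T$ to be strictly increasing, the right-neighbor condition in the definition of a dimension pair specializes to $b \leq c$, and the ``below in same column'' clause contributes nontrivially. In both cases the plan is to recover $T$ by a downward induction on the entry value: having determined the positions of $n, n-1, \ldots, \ell+1$ in $T$, use $x_\ell$ together with the already-known larger entries and the Hessenberg or permissibility constraints to locate $\ell$ uniquely among the remaining cells.

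The main obstacle is verifying the inductive step in each case. The difficulty is that $x_\ell$ is not simply a count of inversions but a count of inversions satisfying a right-neighbor condition that depends on entries other than $\ell$ itself; in the Springer case $x_\ell$ further mixes ``below in same column'' and ``strictly-left of'' contributions coming from the multi-row shape of $\lambda$, and these must be disentangled. A robust route is probably to argue by contradiction: suppose two distinct permissible fillings $T \neq T'$ satisfy $\mathbf{x}(T) = \mathbf{x}(T')$, locate a distinguished position (for instance, the rightmost cell, or the leftmost column in which they disagree) where $T$ and $T'$ differ, and then exhibit a mismatch between the corresponding values $x_\ell$ by a local accounting of how the dimension pairs with top part $\ell$ change under the altered placement. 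Once this reconstruction (or the equivalent injectivity) is in hand, the lemma follows immediately from the reduction in the first paragraph.
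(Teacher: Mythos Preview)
Your reduction in the first paragraph is exactly the paper's reduction: since $w \mapsto \phi_\lambda^{-1}(w^{-1})$, $\mathbf{x} \mapsto \omega(\mathbf{x})$, and inversion are all bijections, injectivity of $\roll$ reduces to injectivity of the map $T \mapsto \mathbf{x}(T)$ on $\PFill(\lambda,h)$.

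The difference is in what comes next. The paper does not carry out the combinatorial reconstruction you outline; it simply cites Mbirika \cite[Section 3.2]{Mbirika:2010}, who in both the regular nilpotent and the Springer cases constructs an explicit inverse to $T \mapsto \mathbf{x}(T)$ (phrased there in terms of monomials built from the list of top parts, which is equivalent data). Your proposed downward-induction reconstruction is in spirit the same program Mbirika executes, so you are not taking a genuinely different route---you are proposing to reprove a result that already exists in the literature. As written, your proposal is not yet a proof: the second and third paragraphs are a plan, and you yourself flag that the inductive step is nontrivial because $x_\ell$ depends on the right-neighbor condition and, in the Springer case, mixes column-below and strictly-left contributions. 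If you want a self-contained argument you must actually carry out that step; otherwise the cleanest completion is to invoke Mbirika's inverse, as the paper does.
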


\begin{proof}
Since the association $\mathbf{x} = (x_2, x_3,
  \ldots, x_n) \mapsto \omega(\mathbf{x})$ 
 given in~\eqref{eq:permutation from top parts} is a
  bijection it suffices to show that
  the map which sends a Hessenberg fixed point $w
  \in \Hess(h)^{S^1}$ to the list of top parts $\mathbf{x}$ of its
  associated 
  permissible filling is injective. Mbirika shows that, in the cases of
  regular nilpotent Hessenberg varieties and nilpotent Springer varieties, 
  there exists an inverse to this map 
 (Mbirika works with monomials in $n-1$
  variables constructed from the list of top parts, but this is equivalent
  data) \cite[Section 3.2]{Mbirika:2010}. The result follows.
\end{proof}

\begin{lemma}\label{lemma:rolldowns bruhat less}
  For every $w \in \Hess(h)^{S^1}$, we have $\roll(w) \leq w$ in
  Bruhat order. 
\end{lemma}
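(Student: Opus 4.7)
The plan is to reduce the Bruhat inequality $\roll(w) \leq w$ to a componentwise inequality of two lists of top parts and then apply Fact~\ref{fact:order}. Since Bruhat order on $S_n$ is preserved by inversion and $\roll(w) = \omega(\mathbf{x})^{-1}$ by construction, the desired statement is equivalent to $\omega(\mathbf{x}) \leq w^{-1}$. The strategy is therefore to exhibit a list of top parts $\mathbf{y}$ such that $\omega(\mathbf{y}) = w^{-1}$ and such that $\mathbf{x} \leq \mathbf{y}$ componentwise.

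For each $\sigma \in S_n$ and each $\ell \in \{2, \ldots, n\}$, define the (right) Lehmer code entry
\[
L_\ell(\sigma) := \lvert \{ i : 1 \leq i < \ell, \; \sigma(i) > \sigma(\ell) \} \rvert,
\]
so that $L_\ell(\sigma) \in \{0, 1, \ldots, \ell-1\}$ and $L(w) := (L_2(w), \ldots, L_n(w))$ is a legitimate list of top parts in the sense of~\eqref{eq:permutation from top parts}. The first key step is to verify the identity $\omega(L(w)) = w^{-1}$. Unpacking the text following~\eqref{eq:permutation from top parts}, the integer $x_\ell$ attached to $\omega(\mathbf{x})$ counts values strictly less than $\ell$ that appear to the right of $\ell$ in the one-line notation of $\omega(\mathbf{x})$; equivalently, $x_\ell = L_\ell(\omega(\mathbf{x})^{-1})$. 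Since $\omega$ and $L$ are both bijections from $\prod_{\ell=2}^n \{0, \ldots, \ell-1\}$ to $S_n$, this forces $\omega(L(w)) = w^{-1}$. Taking $\mathbf{y} := L(w)$, Fact~\ref{fact:order} reduces the lemma to proving the pointwise inequality $x_\ell \leq L_\ell(w)$ for every $\ell \in \{2, \ldots, n\}$.

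For this pointwise inequality I would use Remark~\ref{remark:PF_w in case N principal}: in the regular nilpotent case the filling $\phi_\lambda^{-1}(w^{-1})$ is the single row $a_1 a_2 \cdots a_n$ with $a_j = w^{-1}(j)$. Writing $p_\ell := w(\ell)$ for the position of the value $\ell$ in this row, the change of variables $i \leftrightarrow j := w(i)$ gives
\[
L_\ell(w) = \lvert \{ i < \ell : w(i) > p_\ell \} \rvert = \lvert \{ j > p_\ell : a_j < \ell \} \rvert.
\]
On the other hand, by Definition~\ref{definition:dimension pair}, the pair $(a_j, \ell)$ is a dimension pair with top part $\ell$ precisely when $a_j < \ell$, $j > p_\ell$, and in addition either $j = n$ or $\ell \leq h(a_{j+1})$. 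Thus $x_\ell$ counts exactly the subset of $\{ j > p_\ell : a_j < \ell \}$ cut out by the extra Hessenberg constraint on the box immediately to the right of $a_j$, and so $x_\ell \leq L_\ell(w)$ is immediate. The most delicate bookkeeping step of the entire argument is pinning down the ``higher integer'' convention used in~\eqref{eq:permutation from top parts} so as to establish the Lehmer-code identity $\omega(L(w)) = w^{-1}$; once that convention is settled, the remainder of the proof is a direct subset comparison.
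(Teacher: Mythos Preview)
Your argument is correct and follows essentially the same route as the paper: reduce to $\omega(\mathbf{x}) \leq w^{-1}$ via inversion of Bruhat order, realize $w^{-1}$ as $\omega(\mathbf{y})$ where $\mathbf{y}$ records inversion counts, observe that dimension pairs form a subset of inversions so $x_\ell \leq y_\ell$, and invoke Fact~\ref{fact:order}. Your Lehmer-code framing and the identity $\omega(L(w)) = w^{-1}$ are exactly the content of the paper's sentence ``the association~\eqref{eq:permutation from top parts} applied to the vector $\mathbf{y}$ recovers the permutation $u$,'' made fully explicit.

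One small point of scope: in your final paragraph you invoke Remark~\ref{remark:PF_w in case N principal} and carry out the subset comparison only in the single-row (regular nilpotent) case. The paper's proof instead asserts in one line that $DP^{\phi_\lambda^{-1}(w^{-1})}$ is contained in the inversion set of $w^{-1}$ for \emph{arbitrary} $\lambda$, which is what Remark~\ref{remark:injective} relies on. This general claim follows from the same mechanism you used: condition~(2) of Definition~\ref{definition:dimension pair} says $b$ is read before $a$ in the column-reading order defining $\phi_\lambda$, which is precisely the inversion condition for $w^{-1}$, and condition~(3) only cuts the set down further. So your argument extends verbatim to general $\lambda$ once you replace ``$j > p_\ell$'' by ``$b$ precedes $a$ in the reading order of $\phi_\lambda$.''
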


\begin{proof}
Since Bruhat order is preserved under taking inverses, 
it suffices to prove that $\omega(\mathbf{x})$ is Bruhat-less than 
$w^{-1}$. 
For any 
permutation $u \in S_n$, set
\[
y_\ell := \{(a,\ell) \hsm \vert \hsm (a,\ell) \textup{ is an inversion
  in } u\}
\]
and let \(\mathbf{y} := (y_2, y_3, \ldots, y_n).\) Then the
association~\eqref{eq:permutation from top parts} applied to the
vector $\mathbf{y}$ recovers the permutation $u$. 
By definition of $\phi_{\lambda}$ and the definition of dimension
pairs, 
the set $DP^{\phi_{\lambda}^{-1}(w^{-1})}$ 
is always a subset of the set of inversions of the
permutation $w^{-1}$. 
From Fact~\ref{fact:order} 
it follows that the permutation $\omega(\mathbf{x})$ 
is Bruhat-less than $w^{-1}$ as desired. 
\end{proof}

\begin{lemma}\label{lemma:betti acceptable} 
Let 
$N_{hf}: \C^n \to \C^n$ be a nilpotent matrix 
in highest form chosen as in Remark~\ref{remark:choice of highest
  form} and let $\lambda := \lambda_{N_{hf}}$. 
Let $h: \{1,2,\ldots,n\} \to \{1,2,\ldots, n\}$ be a Hessenberg
function and $\Hess(N_{hf}, h)$ the associated nilpotent Hessenberg
variety. 
For every $k \geq 0, k \in \Z$, we have 
\[
b_k = \left\lvert \left\{\roll(w) \hsm \vert \hsm  w \in \Hess(h)^{S^1} \textup{ with }
\ell(\roll(w)) = k \right\} \right\rvert,
\]
where $\ell(\roll(w))$ denotes the Bruhat length of $\roll(w) \in
S_n$. 
\end{lemma}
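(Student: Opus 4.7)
The plan is to combine Tymoczko's affine paving theorem (Theorem~\ref{theorem:paving}) with a direct identification of the Bruhat length of $\roll(w)$ with the cardinality of the dimension pair set $DP^{\phi_{\lambda}^{-1}(w^{-1})}$, and then invoke the injectivity of $\roll$ from Lemma~\ref{lemma:injective}.

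First, I would translate the Betti numbers via the affine paving. Because Theorem~\ref{theorem:paving} produces an affine paving of $\Hess(N_{hf},h)$ indexed by the $S^1$-fixed points, the Betti numbers are computed by counting cells of each dimension, and in particular the odd Betti numbers vanish. Combined with~\eqref{eq:dim C_w}, this yields
\[
b_k = \bigl\lvert \{w \in \Hess(N_{hf}, h)^{S^1} \;\vert\; \lvert DP^{\phi_{\lambda}^{-1}(w^{-1})} \rvert = k \} \bigr\rvert.
\]

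Second, I would identify $\ell(\roll(w))$ with the number of dimension pairs. Let $\mathbf{x} = (x_2, \ldots, x_n)$ be the list of top parts associated to $\phi_{\lambda}^{-1}(w^{-1})$. By the remark immediately after~\eqref{eq:permutation from top parts}, the expression $u_2(\mathbf{x}) u_3(\mathbf{x}) \cdots u_n(\mathbf{x})$ is a reduced word for $\omega(\mathbf{x}) \in S_n$, so
\[
\ell(\omega(\mathbf{x})) = \sum_{\ell=2}^n x_\ell = \lvert DP^{\phi_{\lambda}^{-1}(w^{-1})} \rvert,
\]
the last equality being the partition of $DP^{\phi_{\lambda}^{-1}(w^{-1})}$ according to top part. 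Since Bruhat length is invariant under inversion and $\roll(w) = \omega(\mathbf{x})^{-1}$, we obtain $\ell(\roll(w)) = \lvert DP^{\phi_{\lambda}^{-1}(w^{-1})} \rvert$.

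Combining the two displays shows $b_k = \lvert \{w \in \Hess(N_{hf}, h)^{S^1} \;\vert\; \ell(\roll(w)) = k\} \rvert$. To finish, I would apply Lemma~\ref{lemma:injective}: the injectivity of $\roll$ (for the regular nilpotent and Springer cases in which it is proven) implies this cardinality coincides with that of the image set $\{\roll(w) \;\vert\; w \in \Hess(N_{hf},h)^{S^1},\, \ell(\roll(w)) = k\}$, as required. The only nontrivial input beyond bookkeeping is the reduced-word property of $u_2(\mathbf{x})\cdots u_n(\mathbf{x})$, which bridges the combinatorics of dimension pairs with Bruhat length; without it the step identifying $\ell(\roll(w))$ with the cell dimension would fail. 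The injectivity hypothesis is the other pressure point, and in settings where it is not available one obtains only the inequality $\lvert \text{image}\rvert \leq b_k$.
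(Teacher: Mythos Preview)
Your approach is essentially the same as the paper's: both arguments combine Theorem~\ref{theorem:paving} (to express $b_k$ as a count of fixed points with a prescribed number of dimension pairs) with the observation that the word $u_2(\mathbf{x})\cdots u_n(\mathbf{x})$ is reduced, so that $\ell(\roll(w)) = \lvert DP^{\phi_\lambda^{-1}(w^{-1})}\rvert$.

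The one point of divergence is your final step invoking Lemma~\ref{lemma:injective}. The paper's proof does \emph{not} appeal to injectivity; it stops at $b_k = \lvert\{w \in \Hess(N_{hf},h)^{S^1} : \ell(\roll(w)) = k\}\rvert$ and declares the result. This is consistent with Remark~\ref{remark:injective}, which asserts that Lemma~\ref{lemma:betti acceptable} holds for \emph{general} nilpotent $N_{hf}$ and $h$, i.e., even in cases where injectivity is not established. The intended reading of the right-hand side in the lemma statement is therefore as an indexed family (rolldowns counted with the multiplicity of their preimages), not as a bare image set. Under that reading your injectivity step is unnecessary, and your proof is valid in full generality rather than only in the regular nilpotent and Springer cases. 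If instead one insists on the literal image-set reading, then your invocation of Lemma~\ref{lemma:injective} is the correct way to close the gap, and the paper's proof is slightly elliptical on this point; but then the lemma only holds where injectivity is known, contrary to Remark~\ref{remark:injective}.
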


\begin{proof}
  By construction, $\roll(w)$ has a reduced word decomposition
  consisting of precisely $\lvert DP^{\phi_{\lambda}^{-1}(w^{-1})}
  \rvert$ simple transpositions. Hence its Bruhat length is $\lvert
  DP^{\phi_{\lambda}^{-1}(w^{-1})} \rvert$. By
  Theorem~\ref{theorem:paving}, $b_k$ is precisely the number of fixed
  points $w$ with $\lvert DP^{\phi_{\lambda}^{-1}(w^{-1})} \rvert =
  k$ so the result follows. 
\end{proof}

The following is immediate from the above lemmas and the
definition of Betti pinball given in \cite[Section 3]{HarTym10}.

\begin{proposition}\label{proposition:betti acceptable}
Suppose that $\Hess(N_{hf},h)$ is either a regular nilpotent
Hessenberg variety or a nilpotent Springer variety. 
Then the association $w \mapsto \roll(w)$ given by the dimension pair
  algorithm is a possible outcome of a successful game of Betti poset
  pinball played with ambient partially ordered set $S_n$ equipped
  with Bruhat order, rank function $\rho = \ell: S_n \to \Z$ given by
  Bruhat length, initial subset $\Hess(h)^{S^1} \subseteq S_n$, and
  target Betti numbers $b_k := \dim_\C H^{2k}(\Hess(h);\C)$.
\end{proposition}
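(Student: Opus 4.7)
The plan is to verify, one by one, the three conditions recorded in the paragraph immediately preceding Lemma~\ref{lemma:injective} that characterise a successful outcome of Betti poset pinball (as formulated in \cite[Section 3]{HarTym10}). The ambient data is already fixed: the partially ordered set is $S_n$ under Bruhat order, the rank function is the Bruhat length $\ell$, the initial subset is $\Hess(N_{hf},h)^{S^1} \subseteq \Flags(\C^n)^{T^n} \cong S_n$, and the target Betti numbers are the $b_k = \dim_\C H^{2k}(\Hess(N_{hf},h))$ of Theorem~\ref{theorem:paving}. Because each of the three conditions is exactly the statement of one of Lemmas~\ref{lemma:injective}, \ref{lemma:rolldowns bruhat less}, and~\ref{lemma:betti acceptable}, the proposition will follow by simply assembling these results and invoking the definition from \cite{HarTym10}.

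In slightly more detail: condition~(1), injectivity of $\roll$ on $\Hess(N_{hf},h)^{S^1}$, is Lemma~\ref{lemma:injective}; condition~(2), that $\roll(w) \leq w$ in Bruhat order for every fixed point $w$, is Lemma~\ref{lemma:rolldowns bruhat less}; and condition~(3), that the Bruhat-length distribution of the set $\{\roll(w) : w \in \Hess(N_{hf},h)^{S^1}\}$ matches the even Betti numbers $b_k$, is Lemma~\ref{lemma:betti acceptable}. I would spell out the correspondence between these three conditions and the three lemmas, note that taking inverses preserves Bruhat order and Bruhat length (so passing between $\omega(\mathbf{x})$ and $\roll(w) = \omega(\mathbf{x})^{-1}$ causes no difficulty), and then conclude by citing the definition of Betti pinball.

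The only substantive obstacle in the argument is the restricted hypothesis that $\Hess(N_{hf},h)$ be either regular nilpotent or a nilpotent Springer variety, and this restriction enters only through Lemma~\ref{lemma:injective}: the inverse to the list-of-top-parts construction that yields injectivity is known only in those two cases by \cite[Section 3.2]{Mbirika:2010}. Conditions~(2) and~(3) are proved for arbitrary nilpotent Hessenberg varieties in Lemmas~\ref{lemma:rolldowns bruhat less} and~\ref{lemma:betti acceptable}, so the genuine combinatorial obstruction to extending the proposition beyond the two special cases is precisely the injectivity step. I would flag this in the write-up as the only nontrivial point; everything else is a direct bookkeeping assembly of the preceding lemmas.
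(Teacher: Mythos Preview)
Your proposal is correct and matches the paper's own proof exactly: the paper simply states that the proposition ``is immediate from the above lemmas and the definition of Betti pinball given in \cite[Section 3]{HarTym10},'' which is precisely your assembly of Lemmas~\ref{lemma:injective}, \ref{lemma:rolldowns bruhat less}, and~\ref{lemma:betti acceptable}. Your observation that the hypothesis restricting to the regular nilpotent or Springer case enters only through the injectivity Lemma~\ref{lemma:injective} is also exactly what the paper records in Remark~\ref{remark:injective}.
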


\begin{remark}\label{remark:injective}
Lemmas~\ref{lemma:rolldowns bruhat less} and~\ref{lemma:betti
  acceptable} hold for general nilpotent $N_{hf}$ and Hessenberg
functions $h$. Hence to prove that Proposition~\ref{proposition:betti
  acceptable} holds for more general cases of nilpotent Hessenberg
varieties, it suffices to check that the injectivity assertion (1)
above 
holds. We do not know counterexamples where the injectivity 
fails. It would be of interest to clarify
the situation for more general $N_{hf}$ and $h$. 
\end{remark}

\subsection{Betti pinball, poset-upper-triangularity, and module
  bases}\label{subsec:review pinball}

In the context of a
\textbf{GKM-compatible subspace of a GKM space} \cite[Definition
4.5]{HarTym10}, it is explained in \cite[Section 4]{HarTym10} that the
outcome of a game of poset pinball may be interpreted as specifying a
set of equivariant cohomology classes which, under additional
conditions, yields a module basis for the equivariant cohomology of
the GKM-compatible subspace.  In this paper, the GKM space is the flag
variety $\Flags(\C^n)$ with the standard $T^n$-action and the
GKM-compatible subspace is $\Hess(N_{hf},h)$ with the $S^1$-action
specified above. Consider the $H^*_{T^n}(\pt)$-module basis for
$H^*_{T^n}(\Flags(\C^n))$ given by the equivariant Schubert
classes $\{\sigma_w\}_{w \in S_n}$. The dimension pair algorithm then
specifies the set
\[
\{ p_{\roll(w)} \hsm \vert \hsm w \in \Hess(N_{hf},h)^{S^1} \} \subseteq
H^*_{S^1}(\Hess(N_{hf},h))
\]
where for any $u \in S_n$ the class $p_u := \pi(\sigma_u)$ is defined to be the image
of $\sigma_u$ under the natural projection map 
\[
\pi: H^*_{T^n}(\Flags(\C^n)) \to H^*_{S^1}(\Hess(N_{hf},h))
\]
induced by the inclusion of groups $S^1 \into T^n$ and
the $S^1$-equivariant inclusion of
spaces $\Hess(N_{hf},h) \into \Flags(\C^n)$. We refer to the images
$p_u$ as \textbf{Hessenberg Schubert classes}.

Following the methods of
\cite{HarTym10} we view $H^*_{T^n}(\Flags(\C^n))$ and
$H^*_{S^1}(\Hess(N_{hf},h))$ as subrings of 
\[
H^*_{T^n}((\Flags(\C^n))^{T^n}) \cong \bigoplus_{w \in S_n}
H^*_{T^n}(\pt) 
\quad \textup{respectively} \quad
H^*_{S^1}((\Hess(N_{hf},h))^{S^1}) \cong \bigoplus_{w \in \Hess(N_{hf},h)^{S^1}}
H^*_{S^1}(\pt).
\]
We denote by $\sigma_w(w'), p_{\roll(w)}(w')$ the value of
the $w'$-th coordinate in the direct sums above, for $w, w' \in S_n$
or $w, w' \in \Hess(N_{hf},h)^{S^1}$ respectively.  If
\begin{equation}\label{eq:p upper triangular}
p_{\roll(w)}(w) \neq 0, \quad \textup{ and } \quad p_{\roll(w)}(w') = 0
\textup{ if } w \not \leq w' 
\end{equation}
for all $w, w' \in \Hess(N_{hf},h)^{S^1}$ then 
the set $\{p_{\roll(w)} \hsm \vert \hsm w \in \Hess(N_{hf},h)^{S^1} \}$ in $H^*_{S^1}(\Hess(N_{hf},h))$ is
called \textbf{poset-upper-triangular} (with respect to the partial order on
$\Hess(N_{hf},h)^{S^1} \subseteq S_n$ induced from Bruhat order) 
\cite[Definition 2.3]{HarTym10}. Finally, recall that the cohomology
degree of
an equivariant Schubert class $\sigma_w$ (and hence also the
corresponding Hessenberg Schubert
class $p_{w}$) is $2 \cdot \ell(w)$.

The following is immediate from \cite[Proposition 4.14]{HarTym10} and
the above discussion. 

\begin{proposition}\label{proposition: if triangular then basis}
  Let $\Hess(N_{hf},h)$ be either a regular nilpotent Hessenberg
  variety or a nilpotent Springer variety. Let $\roll:
  \Hess(N_{hf},h)^{S^1} \to S_n$ be the dimension pair algorithm
  defined above. Suppose~\eqref{eq:p upper triangular} holds for all
  $w \in \Hess(N_{hf},h)^{S^1}$. Then the Hessenberg Schubert classes
  $\{p_{\roll(w)} \hsm \vert \hsm w \in \Hess(N_{hf},h)^{S^1}\}$ form
  a $H^*_{S^1}(\pt)$-module basis for the $S^1$-equivariant cohomology
  ring $H^*_{S^1}(\Hess(N_{hf},h))$.
\end{proposition}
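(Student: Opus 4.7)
The plan is to reduce the statement to a direct application of \cite[Proposition 4.14]{HarTym10}, which gives sufficient conditions under which a distinguished family of equivariant cohomology classes on a GKM-compatible subspace forms a module basis for its equivariant cohomology. That proposition requires two hypotheses of its input data: (i) the rolldown assignment $w \mapsto \roll(w)$ must be the outcome of a successful game of Betti poset pinball, played with ambient partially ordered set equal to the GKM one-skeleton combinatorics of the ambient GKM space, initial subset equal to the fixed point set of the GKM-compatible subspace, and target Betti numbers matching those of the GKM-compatible subspace; and (ii) the resulting set of classes must be poset-upper-triangular in the sense of~\eqref{eq:p upper triangular}.

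First I would verify that the geometric and group-theoretic setup of the present proposition fits the GKM-compatible framework of \cite{HarTym10}. Specifically, the ambient GKM space is the flag variety $\Flags(\C^n)$ with the standard $T^n$-action and equivariant cohomology module basis given by the equivariant Schubert classes $\{\sigma_w\}_{w \in S_n}$, while the GKM-compatible subspace is $\Hess(N_{hf},h)$ equipped with the circle action described in Section~\ref{subsec:hessenbergs}. The Hessenberg Schubert classes $p_u = \pi(\sigma_u)$ are defined precisely as the images under the natural projection $\pi: H^*_{T^n}(\Flags(\C^n)) \to H^*_{S^1}(\Hess(N_{hf},h))$ induced by the inclusions $S^1 \into T^n$ and $\Hess(N_{hf},h) \into \Flags(\C^n)$. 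This matches the input required by \cite[Proposition 4.14]{HarTym10} verbatim.

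Next, hypothesis (i) is precisely the content of Proposition~\ref{proposition:betti acceptable}, which has already been established for both the regular nilpotent Hessenberg varieties and nilpotent Springer varieties considered here: the ambient poset is $S_n$ with Bruhat order, the rank function is Bruhat length, the initial subset is $\Hess(N_{hf},h)^{S^1} \subseteq S_n$, and the target Betti numbers $b_k = \dim_\C H^{2k}(\Hess(N_{hf},h))$ match the cardinalities of the level sets of $\ell \circ \roll$ by Lemma~\ref{lemma:betti acceptable}. Hypothesis (ii) is assumed outright in the statement. Applying \cite[Proposition 4.14]{HarTym10} then immediately yields that the Hessenberg Schubert classes $\{p_{\roll(w)}\}_{w \in \Hess(N_{hf},h)^{S^1}}$ form an $H^*_{S^1}(\pt)$-module basis of $H^*_{S^1}(\Hess(N_{hf},h))$, as desired.

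There is no substantive obstacle; the entire argument consists of assembling previously established pieces. The only care required is bookkeeping: one must confirm that the cohomological degrees match (the class $p_{\roll(w)}$ lies in degree $2\ell(\roll(w))$, which equals $2 \lvert DP^{\phi_\lambda^{-1}(w^{-1})} \rvert$ by the construction of $\roll$ and hence accounts for exactly the right Betti number by Theorem~\ref{theorem:paving}) and that the partial order on $\Hess(N_{hf},h)^{S^1}$ used in~\eqref{eq:p upper triangular} is the one inherited from Bruhat order on $S_n$, as used in \cite[Proposition 4.14]{HarTym10}. Both points are immediate from the definitions.
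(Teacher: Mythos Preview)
Your proposal is correct and follows essentially the same approach as the paper: the paper's proof simply states that the result is immediate from \cite[Proposition 4.14]{HarTym10} together with the preceding discussion (in particular Proposition~\ref{proposition:betti acceptable}), which is exactly the reduction you carry out in more detail.
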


Therefore, in order to prove that the Hessenberg Schubert classes
above form a module basis as desired, it suffices to show that they
satisfy the upper-triangularity conditions~\eqref{eq:p upper
  triangular} for all $w, w' \in \Hess(N_{hf},h)^{S^1}$. The proof of this assertion, for a special class of
regular nilpotent Hessenberg varieties closely related to Peterson
varieties, is the content of 
Sections~\ref{sec:upper triangularity} and~\ref{sec:combinatorics}.

We close the section with a brief discussion of matchings. Following
\cite[Section 4.3]{HarTym10}, define 
\[
\deg_{\Hess(N_{hf},h)}(w) := \dim_\C(C_w)
\]
to be the (complex) dimension of the affine cell $C_w$ containing 
the fixed point $w$ in Tymoczko's paving by affines of $\Hess(N_{hf},h)$ in
Theorem~\ref{theorem:paving}. Then from the discussion above we know 
\[
\deg_{\Hess(N_{hf},h)}(w) = \lvert DP^{\phi_{\lambda}^{-1}(w^{-1})} \rvert = \ell(\roll(w)),
\]
and since the cohomology degree of $p_{\roll(w)}$ is $2 \cdot
\ell(\roll(w))$, we see that the association $w \mapsto \roll(w)$ from
$\Hess(N_{hf},h)^{S^1} \to S_n$ is also a \textbf{matching} in the sense of
\cite{HarTym10} with respect to $\deg_{\Hess(N_{hf},h)}$ and rank function
$\rho$ on $S_n$ given by Bruhat length. Thus the fact that the
$\{p_{\roll(w)} \hsm \vert \hsm w \in \Hess(N_{hf},h)^{S^1} \}$ form a module
basis can also be deduced from \cite[Theorem 4.18]{HarTym10}.

\section{Poset-upper-triangularity of rolldown classes 
for $334$-type Hessenberg varieties}\label{sec:upper triangularity}

In this section and in Section~\ref{sec:combinatorics}
we analyze in detail the dimension pair algorithm in the case
of a Hessenberg variety which is 
closely related to the Peterson variety, and in particular prove that
the algorithm produces a poset-upper-triangular module basis for its
$S^1$-equivariant cohomology ring.
Here and below the nilpotent operator $N$ under consideration is always the principal
nilpotent, so we omit the $N$ from the notation and write
$\Hess(h)$. Similarly the corresponding Young diagram is always
$\lambda = (n)$ so we omit the $\lambda$ from notation and write
$\PFill(h)$ instead of $\PFill(\lambda, h)$. 

We fix for this discussion 
the Hessenberg function given by 
\begin{equation}\label{eq:334 Hessenberg} 
h(1)=h(2)=3, \quad h(i) = i+1 \textup{ for } 3 \leq i \leq n-1, \quad
\textup{ and } 
h(n)= n.
\end{equation}
The only difference between this function $h$ and the
Hessenberg function for the Peterson variety studied in
\cite{HarTym09} is that the value of $h(1)$ is $3$ instead of $2$. In
this sense this $h$ is ``close'' to the Peterson case.
Thus it is natural that much of our analysis follows that 
for Peterson varieties in \cite{HarTym09}, although it is still
necessary to 
introduce new ideas and terminology to handle the Hessenberg fixed
points in $\Hess(h)^{S^1}$ which do not arise in the Peterson case. 

The Hessenberg function $h$ in~\eqref{eq:334 Hessenberg} is
trivial if $n=3$ since in that case $h(1)=h(2)=h(3)=3$ which implies
that the corresponding Hessenberv variety $\Hess(h)$ is equal to the
full flag variety $\mathcal{F}\ell ags(\C^3)$. Hence we assume $n \geq
4$ throughout.  Under this assumption and following the notation
introduced in Section~\ref{sec:background}, the Hessenberg function is
of the form $h=(3,3,4, \cdots)$. As such, for the purposes of this
manuscript, we refer to this family of regular nilpotent Hessenberg
varieties as \textbf{$\mathbf{334}$-type Hessenberg varieties}.

Our main result is the following theorem. 

\begin{theorem}\label{theorem:perfect}
Let $n \geq 4$ and let $\Hess(h)$ be the $334$-type Hessenberg variety in
$\mathcal{F}\ell ags(\C^n)$.
Let $\roll: \Hess(h)^{S^1} \to S_n$ be the dimension pair
algorithm defined in Section~\ref{sec:aba}. 
Then 
\begin{equation}\label{eq:upper}
p_{\roll(w)}(w) \neq 0, \quad \textup{ and } \quad p_{\roll(w)}(w') = 0
\textup{ if } w \not \leq w' 
\end{equation}
for all $w, w'  \in \Hess(h)^{S^1}$. 
In particular 
the Hessenberg Schubert classes $\{p_{\roll(w)} \hsm
\vert \hsm w \in \Hess(h)^{S^1}\}$ form a $H^*_{S^1}(\pt)$-module
basis for the $S^1$-equivariant cohomology ring
$H^*_{S^1}(\Hess(h))$. 
\end{theorem}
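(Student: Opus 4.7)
My plan is to reduce the two upper-triangularity conditions in~\eqref{eq:upper} to a combinatorial statement about Bruhat order in $S_n$ via the Billey formula, and then prove that statement by an explicit analysis of the $334$-type fixed points and their rolldowns. Recall that Billey's formula expresses the restriction $\sigma_u(v) \in H^*_{T^n}(\pt)$ of an equivariant Schubert class as a sum of monomials in the positive roots $t_i - t_j$ ($i<j$), and this sum is nonzero exactly when $u \leq v$ in Bruhat order. The inclusion $S^1 \into T^n$ from~\eqref{eq:def-circle} restricts $t_i$ to $(n-i+1)t$, so each positive root $t_i-t_j$ restricts to the strictly positive multiple $(j-i)t$ of the generator of $H^*_{S^1}(\pt)$. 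Hence $p_{\roll(w)}(w') \neq 0$ if and only if $\roll(w) \leq w'$, and~\eqref{eq:upper} reduces to the assertion that for all $w, w' \in \Hess(h)^{S^1}$, one has $\roll(w) \leq w'$ in Bruhat order if and only if $w \leq w'$.

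The ``if'' direction, and in particular the nonvanishing $p_{\roll(w)}(w) \neq 0$, is immediate from Lemma~\ref{lemma:rolldowns bruhat less}. The substantive content is the ``only if'' direction. To attack it, I would first give an explicit enumeration of $\Hess(h)^{S^1}$ via the bijection of Proposition~\ref{proposition:fixed points and fillings} with permissible fillings of the single-row diagram $(n)$. Since $h$ differs from the Peterson Hessenberg function only at $h(1)=3$, the permissible fillings split into two classes: those also permissible for the Peterson function, yielding ``Peterson-type'' fixed points, and ``new'' fillings made possible by the slack at position~$1$. For each class I would compute the list of top parts $\mathbf{x}$ directly from Definition~\ref{definition:dimension pair}, thereby exhibiting explicit reduced words for $\roll(w)^{-1} = \omega(\mathbf{x})$.

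With these explicit descriptions in hand, the ``only if'' direction would be established by a case analysis, using the tableau (or subword) criterion for Bruhat order. The case where both $w$ and $w'$ are Peterson-type should reduce to analysis of the type worked out in \cite{HarTym09}, while the cases involving new fixed points require genuinely new arguments. The main obstacle will be controlling the interaction of the inversions introduced by $h(1)=3$ with the Peterson-type rolldown structure: because $\roll(w)$ can be strictly shorter than $w$ and its reduced word may share simple transpositions with rolldowns of other fixed points, the implication $\roll(w) \leq w' \Rightarrow w \leq w'$ is not a formal consequence of the reduced-word data. I anticipate that carefully matching each inversion of $w'$ forced by $\roll(w) \leq w'$ to an inversion of $w$ -- exploiting the very specific form of the $334$-type Hessenberg function -- will deliver the result, and that Sections~\ref{sec:upper triangularity} and~\ref{sec:combinatorics} are devoted precisely to this combinatorial matching.
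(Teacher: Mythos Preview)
Your overall strategy is correct and matches the paper's: reduce~\eqref{eq:upper} to the Bruhat-order statement $\roll(w) \leq w' \Leftrightarrow w \leq w'$ for $w,w' \in \Hess(h)^{S^1}$, then prove that by classifying the fixed points (Peterson-type versus the ``new'' ones coming from $h(1)=3$) and running a case analysis with the tableau/subword criterion. That is exactly what the paper does in Section~\ref{sec:upper triangularity}, and your anticipated breakdown into Peterson-type and non-Peterson-type fixed points, with the latter further split, is what the paper carries out (they call the two new families $312$-type and $231$-type).

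Where you genuinely diverge from the paper is in the nonvanishing assertion $p_{\roll(w)}(w)\neq 0$. You argue that each $r(j,\mathbf{b})$ in Billey's formula is a \emph{positive} root (a standard fact for reduced words), hence restricts under $\pi_{S^1}$ to a strictly positive multiple of $t$, so every summand of $p_{\roll(w)}(w')$ is a positive multiple of $t^{\ell(\roll(w))}$ and the sum is nonzero precisely when $\roll(w)\leq w'$. Combined with Lemma~\ref{lemma:rolldowns bruhat less} this immediately gives $p_{\roll(w)}(w)\neq 0$. The paper instead devotes all of Section~\ref{sec:combinatorics} to computing $p_{\roll(w)}(w)$ explicitly (Proposition~\ref{proposition:prollw at w}) and deduces nonvanishing from the closed formulas. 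Your positivity argument is shorter and conceptually cleaner for the bare nonvanishing statement; the paper's approach buys more, namely explicit values that are the seed for a future Monk-type formula (as they say at the start of Section~\ref{sec:combinatorics}). Both are valid; if your only goal is Theorem~\ref{theorem:perfect}, your route avoids Section~\ref{sec:combinatorics} entirely.
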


For ease of exposition, and because the arguments required are of a
somewhat different nature, we prove Theorem~\ref{theorem:perfect} by
proving the two assertions in~\eqref{eq:upper} separately, as
follows.

\begin{proposition}\label{proposition:prollw at w nonzero} 
  Let $n, h, \Hess(h)$ and $\roll$ be as above. Then 
  \begin{equation}
    \label{eq:prollw at w nonzero}
    p_{\roll(w)}(w) \neq 0 
  \end{equation}
for all $w \in \Hess(h)^{S^1}$. 
\end{proposition}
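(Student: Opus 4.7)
The plan is to prove Proposition~\ref{proposition:prollw at w nonzero} by combining Billey's formula for the localization of an equivariant Schubert class with a short sign-tracking argument through the restriction $H^*_{T^n}(\pt) \to H^*_{S^1}(\pt)$. Recall that, given any reduced word decomposition $w = s_{i_1} s_{i_2} \cdots s_{i_m}$ and any $u \in S_n$, Billey's formula expresses
\[
\sigma_u(w) = \sum_{J} \prod_{k \in J} s_{i_1} s_{i_2} \cdots s_{i_{k-1}}(\alpha_{i_k}),
\]
where $\alpha_j = t_j - t_{j+1}$ and the sum is taken over index subsets $J \subseteq \{1, \ldots, m\}$ whose corresponding ordered subwords are reduced expressions for $u$. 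Since the ambient word is reduced, the inversion characterization of length forces each factor $s_{i_1} \cdots s_{i_{k-1}}(\alpha_{i_k})$ to be a positive root, i.e., of the form $t_a - t_b$ with $a < b$.

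Next I would observe that the circle subgroup given in~\eqref{eq:def-circle} is parametrized so that the induced ring homomorphism $H^*_{T^n}(\pt) \to H^*_{S^1}(\pt) \cong \C[t]$ sends $t_i \mapsto (n-i+1)t$. Consequently every positive root $t_a - t_b$ with $a < b$ restricts to the strictly positive integer multiple $(b-a)t$. Since $p_u(w)$ agrees with the image of $\sigma_u(w)$ under this restriction, Billey's formula yields that $p_u(w)$ equals a sum of strictly positive integer multiples of $t^{\ell(u)}$, with one summand for each subword $J$ that represents a reduced expression for $u$.

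I would then specialize to $u = \roll(w)$. By Lemma~\ref{lemma:rolldowns bruhat less} we have $\roll(w) \leq w$ in Bruhat order, and hence by the standard subword characterization of Bruhat order there exists at least one subword of any reduced expression of $w$ that realizes a reduced expression for $\roll(w)$. Thus the sum defining $p_{\roll(w)}(w)$ contains at least one strictly positive term; since every term has the same sign, no cancellation can occur, and $p_{\roll(w)}(w)$ must be a strictly positive rational multiple of $t^{\ell(\roll(w))}$. This yields the desired nonvanishing.

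The plan involves no significant combinatorial obstacle; the primary care is in matching sign conventions and in confirming that the circle in~\eqref{eq:def-circle} is positive enough that no root $t_a - t_b$ with $a < b$ restricts to zero in $\C[t]$. It is worth noting that this argument never invokes any specific property of the $334$-type Hessenberg function beyond the Bruhat-comparability $\roll(w) \leq w$ supplied by Lemma~\ref{lemma:rolldowns bruhat less}; the characteristic combinatorics of the $334$-type Hessenberg variety will instead be essential for the companion vanishing statement $p_{\roll(w)}(w') = 0$ when $w \not \leq w'$, which I expect to be the substantially more delicate half of Theorem~\ref{theorem:perfect}.
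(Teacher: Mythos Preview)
Your argument is correct. Each factor $s_{i_1}\cdots s_{i_{k-1}}(\alpha_{i_k})$ in Billey's formula is indeed a positive root whenever the ambient word is reduced, and the circle of~\eqref{eq:def-circle} restricts $t_a - t_b$ to $(b-a)t$, so every summand is a strictly positive multiple of $t^{\ell(\roll(w))}$. Combined with $\roll(w) \leq w$ from Lemma~\ref{lemma:rolldowns bruhat less}, the nonvanishing follows immediately.

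Your route is genuinely different from the paper's, and in fact more economical for this particular proposition. The paper spends all of Section~\ref{sec:combinatorics} computing $p_{\roll(w)}(w)$ \emph{explicitly} for each of the four fixed-point types (Peterson containing $321$, Peterson not containing $321$, $312$-type, $231$-type): it first determines the value of each individual summand via case analysis (Lemma~\ref{lemma:summands p si} and Lemma~\ref{lemma:billey summands}), then counts the number of summands (Lemma~\ref{lemma:number of summands}), and finally multiplies to obtain closed-form expressions (Proposition~\ref{proposition:prollw at w}), from which nonvanishing is read off. The authors acknowledge this is overkill for the bare nonvanishing statement---they say explicitly that the formula is ``not strictly necessary'' but is recorded as a first step toward a Monk formula for $334$-type Hessenberg varieties. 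Your positivity argument bypasses all of the type-by-type combinatorics and, as you note, works for any nilpotent Hessenberg variety once $\roll(w) \leq w$ is known; what it sacrifices is the explicit value of $p_{\roll(w)}(w)$, which the paper wants for later Schubert-calculus applications.
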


\begin{proposition}\label{proposition:upper vanishing}
  Let $n, h, \Hess(h)$ and $\roll$ be as above. Then 
  \begin{equation}
    \label{eq:upper vanishing}
    p_{\roll(w)}(w') = 0
\textup{ if } w \not \leq w' 
  \end{equation}
for all $w, w' \in \Hess(h)^{S^1}$. 
\end{proposition}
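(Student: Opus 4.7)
The plan is to reduce the geometric vanishing statement to a purely combinatorial Bruhat-order assertion, and then verify that assertion using the explicit description of $334$-type fixed points and their rolldowns developed in Section~\ref{sec:combinatorics}.

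First, I would invoke the standard vanishing criterion for equivariant Schubert classes: $\sigma_u(v)=0$ in $H^*_{T^n}(\pt)=\C[t_1,\ldots,t_n]$ whenever $u\not\leq v$ in Bruhat order on $S_n$. Since $p_{\roll(w)}=\pi(\sigma_{\roll(w)})$ and localization at fixed points commutes with the projection $\pi:H^*_{T^n}(\Flags(\C^n))\to H^*_{S^1}(\Hess(h))$, one has $p_{\roll(w)}(w')=\sigma_{\roll(w)}(w')|_{S^1}$ in $H^*_{S^1}(\pt)$. In particular, if $\sigma_{\roll(w)}(w')=0$ already in $H^*_{T^n}(\pt)$, then its $S^1$-restriction vanishes as well. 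Consequently, Proposition~\ref{proposition:upper vanishing} will follow from the purely combinatorial contrapositive:
\begin{equation}\label{eq:bruhat contrapositive}
 \roll(w)\leq w' \textup{ in Bruhat order on } S_n \quad\Longrightarrow\quad w\leq w', \qquad \textup{for all } w,w'\in\Hess(h)^{S^1}.
\end{equation}

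To prove~\eqref{eq:bruhat contrapositive}, I would begin by giving explicit one-line descriptions of both $w$ and $\roll(w)$ for each $w\in\Hess(h)^{S^1}$. By Proposition~\ref{proposition:fixed points and fillings} together with Remark~\ref{remark:PF_w in case N principal}, the fixed points are in bijection with permutations $v=w^{-1}$ whose one-line notation is a permissible filling, i.e., satisfies $v(j-1)\leq h(v(j))$ for all $j\geq 2$. For the $334$-type function, this constraint is nontrivial only at positions $j$ with $v(j)\in\{1,2\}$, and a direct analysis partitions the permissible fillings into a short list of combinatorial types; for each type, the dimension pair algorithm produces via~\eqref{eq:permutation from top parts} an explicit reduced word for $\omega(\mathbf{x})$, hence an explicit one-line formula for $\roll(w)=\omega(\mathbf{x})^{-1}$. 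This classification will be the content of Section~\ref{sec:combinatorics}. With the formulas in hand, I would then verify~\eqref{eq:bruhat contrapositive} by invoking the tableau criterion for Bruhat order ($u\leq v$ iff for every $k$ the $k$-th sorted prefix of $u$ is entrywise no larger than that of $v$): the implication reduces to a case analysis over pairs of types $(w,w')$, in which, assuming the tableau inequalities between $\roll(w)$ and $w'$, one deduces the tableau inequalities between $w$ and $w'$ directly from the filling of $w$.

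The main obstacle will be this last step. Because $\roll(w)\leq w$ always, by Lemma~\ref{lemma:rolldowns bruhat less}, the map $w\mapsto\roll(w)$ typically loses Bruhat information, and so a priori $\roll(w)$ might admit strictly more fixed-point Bruhat-successors than $w$. The heart of the argument will be to show that no such extra successors arise within $\Hess(h)^{S^1}$---equivalently, that the dimension pairs of $w^{-1}$ encode enough of the inversion data of $w$ to recover every needed tableau inequality against another fixed point. The pure Peterson subfamily of fixed points is essentially covered by the analysis in~\cite{HarTym09}, so the substantive new work lies in handling the additional fixed points that appear only because $h(1)=3$ (rather than $2$), and in the mixed comparisons between these and the Peterson-type fixed points.
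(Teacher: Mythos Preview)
Your proposal is correct and follows essentially the same route as the paper: reduce~\eqref{eq:upper vanishing} to the Bruhat-order implication $\roll(w)\leq w'\Rightarrow w\leq w'$ via the upper-triangular vanishing of equivariant Schubert classes (this is the paper's Lemma~\ref{lemma:fixed points versus fillings}), then classify the $334$-type fixed points into Peterson, $312$, and $231$ types, compute their rolldowns explicitly, and verify the implication by a case analysis using the tableau criterion. The paper streamlines the case analysis by introducing the associated subset $\mathcal{A}(w)\subseteq\{1,\ldots,n-1\}$ and proving that in most type-pairings the implication reduces to $\mathcal{A}(w)\subseteq\mathcal{A}(w')$ (Lemmas~\ref{lemma:Aw and transpositions}--\ref{lemma:when set containment enough}), with the tableau criterion invoked only for the residual mixed cases (Lemmas~\ref{lemma:not bruhat less} and~\ref{lemma:initial segments and Aw}); but this is an organizational refinement of exactly the strategy you outline, not a different argument.
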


The proof of Proposition~\ref{proposition:prollw at w nonzero} is the
content of Section~\ref{sec:combinatorics}. The main result of the
present section is the upper-triangularity property asserted in
Proposition~\ref{proposition:upper vanishing}. 
Its proof requires 
a number of preliminary
results. We first begin by reformulating the problem
in terms of Bruhat relations among the fixed points.

\begin{lemma}\label{lemma:fixed points versus fillings} 
Let $n, h, \Hess(h)$ and $\roll$ be as above. If for all $w, w' \in
\Hess(h)^{S^1}$ we have 
\begin{equation}\label{eq:fillings Bruhat order}
\roll(w) \leq w' \Leftrightarrow w \leq w'
\end{equation}
in Bruhat order,
then the Hessenberg
Schubert classes $\{p_{\roll(w)} \hsm
\vert \hsm w \in \Hess(h)^{S^1}\}$ satisfy~\eqref{eq:upper vanishing}. 
\end{lemma}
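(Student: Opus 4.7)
The plan is to reduce the assertion to the classical Bruhat vanishing property of equivariant Schubert classes at torus-fixed points of the flag variety: for any $u \in S_n$ and any $w' \in S_n \cong \Flags(\C^n)^{T^n}$, the localization satisfies $\sigma_u(w') = 0$ whenever $u \not\leq w'$ in Bruhat order. Once this is invoked, the lemma follows by unpacking the definition of $p_{\roll(w)}(w')$ via the projection $\pi$ and then applying the hypothesized equivalence.

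First I would unpack $p_{\roll(w)}(w')$. Viewing
\[
H^*_{T^n}(\Flags(\C^n)) \subseteq \bigoplus_{u \in S_n} H^*_{T^n}(\pt), \qquad H^*_{S^1}(\Hess(h)) \subseteq \bigoplus_{u \in \Hess(h)^{S^1}} H^*_{S^1}(\pt),
\]
and using the compatibility of the projection $\pi: H^*_{T^n}(\Flags(\C^n)) \to H^*_{S^1}(\Hess(h))$ with the inclusions $\Hess(h)^{S^1} \subseteq S_n$ and $S^1 \into T^n$, the value $p_{\roll(w)}(w')$ at a fixed point $w' \in \Hess(h)^{S^1}$ is computed by first localizing $\sigma_{\roll(w)}$ at $w'$ to obtain an element of $H^*_{T^n}(\pt)$ and then applying the natural restriction $H^*_{T^n}(\pt) \to H^*_{S^1}(\pt)$. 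In particular, $\sigma_{\roll(w)}(w') = 0$ in $H^*_{T^n}(\pt)$ implies $p_{\roll(w)}(w') = 0$. Now suppose $w \not\leq w'$. The hypothesized equivalence $\roll(w) \leq w' \Leftrightarrow w \leq w'$ yields $\roll(w) \not\leq w'$, and Bruhat vanishing applied to $u = \roll(w)$ gives $\sigma_{\roll(w)}(w') = 0$. Therefore $p_{\roll(w)}(w') = 0$, which is precisely the vanishing in~\eqref{eq:upper vanishing}.

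Since the proof of the lemma itself is essentially a one-line application of Bruhat vanishing together with the assumed equivalence, the main obstacle is not inside the lemma but in verifying its hypothesis. One direction, $w \leq w' \Rightarrow \roll(w) \leq w'$, follows immediately from Lemma~\ref{lemma:rolldowns bruhat less} together with transitivity of Bruhat order. The substantive direction is the converse $\roll(w) \leq w' \Rightarrow w \leq w'$, which must be established by a careful combinatorial analysis of the permissible fillings and their dimension pairs in the $334$-type case; this is where the specific structure of $h=(3,3,4,5,\ldots,n-1,n,n)$ must be exploited, and where the analogy with the Peterson analysis of \cite{HarTym09} must be supplemented with new arguments to account for the extra fixed points that do not arise in the Peterson setting.
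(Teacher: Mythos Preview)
Your argument is correct and matches the paper's proof: both reduce to the Bruhat upper-triangularity of equivariant Schubert classes $\sigma_u(w')=0$ for $u\not\leq w'$, then use the commutativity of the restriction diagram to transfer this vanishing to $p_{\roll(w)}(w')$, and finally invoke the hypothesized equivalence to convert $w\not\leq w'$ into $\roll(w)\not\leq w'$. Your closing paragraph about verifying the hypothesis is extra context beyond the lemma itself, but it is accurate and agrees with how the paper proceeds in the remainder of the section.
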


\begin{proof}
Recall that the equivariant Schubert classes are
poset-upper-triangular with respect to Bruhat order on $S_n$. In
particular, for all $w, w' \in S_n$ we have $\sigma_w(w') = 0$ if $w' \not \geq w$. Since the Hessenberg
Schubert classes are images of the Schubert classes and the diagram 
\begin{equation}\label{eq:comm diagram for Hess}
 \xymatrix{
H^*_{T^n}(\Flags(\C^n)) \ar @{^{(}->}[r] \ar[d] & H^*_{T^n}((\Flags(\C^n))^{T^n}) \cong
\bigoplus_{w \in W} H^*_{T^n}(\pt) \ar[d] \\
H^*_{S^1}(\Hess(h)) \ar @{^{(}->}[r] &
H^*_{S^1}((\Hess(h))^{S^1}) \cong \bigoplus_{w \in \Hess(h)^{S^1}}
H^*_{S^1}(\pt) 
}
\end{equation}
commutes, it follows that if for all $w, w' \in \Hess(h)^{S^1}$, we have 
\begin{equation}\label{eq:inequalities with fixed points} 
\roll(w) \leq w' \Leftrightarrow w \leq w'
\end{equation}
in Bruhat order then~\eqref{eq:upper vanishing} follows.  
\end{proof}

The rest of this section is devoted to the proof of~\eqref{eq:fillings
  Bruhat order}, which by Lemma~\ref{lemma:fixed points versus
  fillings} then proves Proposition~\ref{proposition:upper
  vanishing}.

\subsection{Fixed points and associated subsets for the
  $334$-type Hessenberg variety}

In this section we enumerate the fixed points in
the $334$-type Hessenberg variety and also associate to each fixed
point in 
$\Hess(h)^{S^1}$ a subset of $\{1,2,\ldots,n-1\}$. As we show
below, the set of fixed points in the Peterson variety is a subset of
the fixed points of the $334$-type Hessenberg variety, so the main
task is to describe the new fixed points which arise in the $334$-type
case. We begin with a general observation.

\begin{lemma}\label{lemma:inclusion of Hess} 
Let $n \in \N$ and let $h, h': \{1,2,\ldots,n\} \to \{1,2,\ldots, n\}$ be
two Hessenberg functions. If $h(i) \geq h'(i)$ for all $i, 1 \leq i
\leq n$, then 
\[
\Hess(h') \subseteq \Hess(h). 
\]
The inclusion $\Hess(h') \into \Hess(h)$ is $S^1$-equivariant and 
in particular $\Hess(h')^{S^1} \subseteq \Hess(h)^{S^1}$ and 
$\PFill(h') \subseteq \PFill(h)$. 
\end{lemma}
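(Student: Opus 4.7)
The plan is to dispatch this lemma by a sequence of definition-chasing steps; no substantive idea is needed beyond monotonicity of the flag $V_1 \subseteq V_2 \subseteq \cdots$ with respect to index, monotonicity of the Hessenberg condition $NV_i \subseteq V_{h(i)}$ with respect to $h$, and the fact that the $S^1$-action on both Hessenberg varieties is, by construction, the restriction of one and the same $S^1$-action on the ambient flag variety.

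First I would handle the set-theoretic inclusion $\Hess(h') \subseteq \Hess(h)$ directly from the definition~\eqref{eq:def-Hess}. Given $V_\bullet \in \Hess(h')$, by definition $NV_i \subseteq V_{h'(i)}$ for every $1 \leq i \leq n$. The hypothesis $h'(i) \leq h(i)$ combined with the nesting $V_{h'(i)} \subseteq V_{h(i)}$ yields $NV_i \subseteq V_{h(i)}$, so $V_\bullet \in \Hess(h)$. This step is immediate.

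Next I would observe the $S^1$-equivariance: the circle subgroup~\eqref{eq:def-circle} is a fixed subgroup of $U(n,\C)$ which acts on all of $\Flags(\C^n)$, and the Hessenberg varieties $\Hess(h)$ and $\Hess(h')$ inherit their $S^1$-actions by restriction of this ambient action. Since the inclusion $\Hess(h') \hookrightarrow \Hess(h)$ is just the set-theoretic inclusion inside $\Flags(\C^n)$, it automatically intertwines the two restricted $S^1$-actions. Taking $S^1$-fixed points is then functorial, so the inclusion $\Hess(h')^{S^1} \subseteq \Hess(h)^{S^1}$ follows from the previous two steps.

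Finally, for the containment $\PFill(h') \subseteq \PFill(h)$, I would unwind Definition~\ref{definition:permissible filling}: an $(h',\lambda)$-permissible filling $T$ satisfies $k \leq h'(j)$ for every horizontal adjacency $\begin{array}{|c|c|}\cline{1-2} k & j \\ \cline{1-2}\end{array}$, and the pointwise bound $h'(j) \leq h(j)$ upgrades this to $k \leq h(j)$, making $T$ also $(h,\lambda)$-permissible. The hardest aspect of this lemma is simply keeping the chain of trivial implications organized; there is no genuine obstacle.
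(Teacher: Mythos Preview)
Your proposal is correct and follows essentially the same approach as the paper: the flag nesting $V_{h'(i)} \subseteq V_{h(i)}$ gives the containment $\Hess(h') \subseteq \Hess(h)$, and equivariance follows because both $S^1$-actions are restrictions of the same ambient action on $\Flags(\C^n)$. If anything, your version is slightly more thorough, since you explicitly verify the $\PFill(h') \subseteq \PFill(h)$ inclusion from Definition~\ref{definition:permissible filling}, whereas the paper's proof leaves that and the fixed-point containment implicit.
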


\begin{proof}
  Let $V_{\bullet} = (V_i)$ denote an element in $\mathcal{F}\ell
  ags(\C^n)$. By definition the regular nilpotent Hessenberg
  variety $\Hess(h')$ associated to $h'$ is
\begin{equation}
\Hess(h')  := \{ V_{\cdot} \in \mathcal{F}\ell ags(\C^n) \hsm \vert \hsm
NV_i \subseteq V_{h'(i)}, \textup{ for all } 1 \leq i \leq n \}
\end{equation}
where $N$ is the principal nilpotent operator. 
Since $V_i \subseteq V_{i+1}$ for all $1 \leq i \leq n-1$ by
definition of flags and $V_n = \C^n$ for all flags, if $h'(i) \leq
h(i)$ for all $i$ then $NV_i \subseteq V_{h'(i)}$ automatically
implies $NV_i \subseteq V_{h(i)}$. We conclude $\Hess(h') \subseteq
\Hess(h)$. The 
$S^1$-equivariance of 
the inclusion $\Hess(h') \into \Hess(h)$ follows from the definition
of the 
$S^1$-action of~\eqref{eq:def-circle}.
\end{proof}

Applying Lemma~\ref{lemma:inclusion of Hess} to the Hessenberg
function 
\begin{equation}\label{def: peterson hessenberg}
h'(i) = i+1 \textup{ for } 1 \leq i \leq n-1, h'(n)=n
\end{equation}
corresponding to the Peterson variety $\Hess(h')$ 
and $h$ the $334$-type Hessenberg
function~\eqref{eq:334 Hessenberg}, we conclude that all fixed points in
$\Hess(h')^{S^1}$ also arise as fixed points in
$\Hess(h)^{S^1}$. We refer to the elements of $\Hess(h')^{S^1}$
(viewed as elements of $\Hess(h)^{S^1}$) as \textbf{Peterson-type
fixed points}.
It therefore remains to describe $\Hess(h)^{S^1} \setminus
\Hess(h')^{S^1}$. It turns out to be convenient to do this by first
describing $\PFill(h) \setminus \PFill(h')$.

We first introduce some terminology. Given a permutation \(w = (w(1)
\hsm w(2) \hsm \cdots w(n))\) in one-line notation and some $i, \ell$,
we say that the entries $\{w(i), w(i+1), \ldots, w(i+\ell)\}$ form a
\textbf{decreasing staircase}, or simply a \textbf{staircase}, if
$w(j+1) = w(j)-1$ for all $i \leq j < i + \ell$. For example for $w =
4327516$, the segment $432$ is a staircase, but $751$, though the entries
decrease, is not. We will say that a consecutive series of staircases
is an \textbf{increasing sequence of staircases} (or simply
\textbf{increasing staircases}) if each entry in a given staircase is
smaller than any entry in any following staircase (reading from left
to right). For instance, \(w = 654987321\) is a sequence of staircases
$654$, $987$, and $321$, but is not an increasing sequence of
staircases since the entries $4,5,6$ are not smaller than the entries
in the later staircase $321$. However, $w = 321654987$ is an
increasing sequence of (three) staircases $321$, $654$, and $987$.

It is shown in \cite{HarTym09} that the $S^1$-fixed points of the
Peterson variety $\Hess(h')$ consist precisely of those permutations
\(w \in S_n\) such that the one-line notation of $w$ is an increasing
sequence of staircases. Since such $w$ are equal to their own
inverses, the permissible fillings $\PFill(h')$ corresponding to
$\Hess(h')$ are precisely those which are increasing sequences of
staircases (cf. Remark~\ref{remark:PF_w in case N principal}).  We now
describe the permissible fillings $\PFill(h)$ which are \emph{not}
Peterson-type fillings. We use the language of $h$-tableau trees
introduced by Mbirika; see \cite[Section 3.1]{Mbirika:2010}
for definitions.  Recall from Remark~\ref{remark:regular case} that we
identify permissible fillings with permutations in $S_n$ via one-line
notation.

\begin{lemma}\label{lemma:non-Peterson fixed points} 
Let $n \geq 4$ and let $\Hess(h)$ be the $334$-type Hessenberg variety in
$\mathcal{F}\ell ags(\C^n)$. Let $w \in \PFill(h)$ be a permissible
filling for $\Hess(h)$ which is not of Peterson type, i.e., \(w \in
\PFill(h) \setminus \PFill(h').\) 
Then precisely one of the following hold: 
\begin{itemize}
  \item The one-line notation of $w$ is of the form 
\[
w' \hsm 3 \hsm 1 \hsm 2 \hsm  w'' 
\]
where $w'$ is a (possibly empty) staircase such that $w' \hsm 3$
is also a staircase, and $w''$ is an increasing sequence of staircases. We
refer to these as \textbf{$312$-type permissible fillings.}

 \item The one-line notation of $w$ is of the form 
\[
2 \hsm w' \hsm 3 \hsm 1 \hsm w''
\]
where $w'$ is a (possibly empty) staircase such that $w' \hsm 3$
is also a staircase, and $w''$ is an increasing sequence of staircases. We
refer to these as \textbf{$231$-type permissible fillings.} 
\end{itemize} 
Moreover, any filling satisfying either of the above conditions appears 
in $\PFill(h) \setminus \PFill(h')$.
\end{lemma}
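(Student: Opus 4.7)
The plan is to characterize membership in $\PFill(h)$ via a simple local condition on consecutive entries, pinpoint exactly how this differs from the Peterson case, and then deduce the rigid structure forced on the remaining non-Peterson permissible fillings.

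Since $\lambda = (n)$ is a single row, by Definition~\ref{definition:permissible filling} a filling $w = (w(1), \ldots, w(n))$ in one-line notation lies in $\PFill(h)$ iff $w(i) \leq h(w(i+1))$ for every $1 \leq i \leq n-1$. Comparing $h$ in~\eqref{eq:334 Hessenberg} with $h'$ in~\eqref{def: peterson hessenberg}, the two agree on $\{2, 3, \ldots, n\}$ and differ only at $i = 1$, where $h(1) = 3$ and $h'(1) = 2$. Consequently, an element of $\PFill(h)$ fails to lie in $\PFill(h')$ iff its one-line notation contains some adjacency of the form $3, 1$, and since $3$ and $1$ each occur exactly once, such an adjacency is unique when present. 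The next step is to locate $2$. If $2$ is not in the first position, then its immediate predecessor must satisfy $w(j-1) \leq h(2) = 3$, so $w(j-1) \in \{1, 3\}$. The value $3$ is already followed by $1$, not $2$, so we must have $w(j-1) = 1$, which forces the substring $312$ in $w$ (the $312$-type). Otherwise $2$ occupies the first position, giving the $231$-type. The two cases are therefore mutually exclusive and exhaustive.

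To pin down the prefix and suffix structure, I would then apply Peterson-type constraints iteratively to every adjacency other than the exceptional $3,1$. Inducting backwards from the $3$, its left neighbor (if any) must be at most $h(3) = 4$ and cannot be $1, 2, 3$, so it equals $4$; the next one to the left must be $\leq h(4) = 5$ and unused, so it equals $5$; and so on. This forces the prefix $w'$ (before $3$, or between the initial $2$ and $3$ in the $231$-case) to equal $(k, k-1, \ldots, 4)$ for some $k \geq 4$ or to be empty, so that $w' \, 3$ is a staircase. In the $231$-case one also checks that the transition $2 \to w'[1]$ is automatically Peterson-OK since $2 \leq a+1 = h(a)$ for every $a \geq 1$. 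For the suffix $w''$ appearing after $12$ (in the $312$-case) or after $1$ (in the $231$-case), every internal adjacency must be Peterson-OK, so $w''$ is an increasing sequence of staircases by the Peterson characterization of \cite{HarTym09} applied to the subpermutation supported on the remaining values $\{k+1, \ldots, n\}$.

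The converse direction is then routine verification: in any filling of either of the two listed shapes, every adjacency is either internal to a prefix/suffix staircase (Peterson-OK), at a staircase boundary ascent (Peterson-OK), at one of the explicit junctions $w' \to 3$, $1 \to 2$, $2 \to w''$, or $2 \to w'$ (each checked above), or the exceptional $3, 1$ adjacency (permitted since $h(1) = 3$); and the presence of the $3,1$ adjacency ensures the filling is not of Peterson type. The main obstacle is establishing the correct dichotomy for the position of $2$ while accounting for the uniqueness of the $31$ adjacency; once that is settled, the prefix and suffix shapes are essentially forced by the iterated adjacency constraint, with the only real freedom being the integer $k \geq 4$ and the choice of Peterson filling on $\{k+1, \ldots, n\}$.
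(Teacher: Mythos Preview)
Your argument is correct and follows a genuinely different route from the paper's. The paper invokes Mbirika's $h$-tableau tree machinery \cite{Mbirika:2010}: it compares the trees for $h$ and $h'$, observes that they agree except at Level~2 where the $334$-tree has three descending edges rather than two, and then traces the two ``extra'' Level~3 vertices $2\,\bullet\,3\,1\,\bullet$ and $\bullet\,3\,1\,2\,\bullet$ down their branches to identify the non-Peterson leaves. You instead argue directly from the adjacency condition $w(i)\le h(w(i+1))$: since $h$ and $h'$ disagree only at the input $1$, a filling in $\PFill(h)\setminus\PFill(h')$ is characterized by the presence of the adjacency $3,1$; the constraint $h(2)=3$ then pins the location of $2$ to either the first position or immediately after the $1$; and iterating the Peterson constraint leftward from $3$ and rightward into $w''$ forces the staircase structure. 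Your approach is more elementary and entirely self-contained, requiring no external combinatorial framework, whereas the paper's tree argument embeds the lemma in machinery that would scale more uniformly to other Hessenberg functions. One small point worth tightening in a final write-up: in the $231$-case your backwards induction ``cannot be $1,2,3$, so it equals $4$'' tacitly assumes the left neighbour lies in $w'$ rather than being the initial $2$; you handle the boundary transition $2\to w'[1]$ separately, but it would read more cleanly to state up front that the induction runs only over positions $2,\ldots,p-1$.
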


\begin{proof}[Proof of Lemma~\ref{lemma:non-Peterson fixed points}]
For any Hessenberg function \(h: \{1, 2, \ldots, n\} \to \{1,2,\ldots,
n\},\) Mbirika shows in \cite[Section 3.2]{Mbirika:2010} that the Level $n$ fillings
in an $h$-tableau tree are precisely the permissible fillings with
respect to $h$.
For the Peterson Hessenberg function in~\eqref{def: peterson hessenberg}
Mbirika's corresponding $h$-tableau tree has the property
that for every $k$ with $1 \leq k \leq n-1$ and every vertex at
Level $k$, there are precisely $2$ edges going down from that vertex
to a Level $k+1$ vertex. (This is because the corresponding
\emph{degree tuple} $\beta$ \cite[Definition 3.1.1]{Mbirika:2010} has
$\beta_i=2$ for all $1 \leq i \leq n-1$.) In the case of the $334$-type Hessenberg
function, by definition the $h$-tableau tree also has precisely $2$ edges going down
from every vertex at Level $k$ for all $k \neq 2$, $1 \leq k \leq
n-1$. However, at Level 2, each vertex has not $2$ but $3$ edges pointing down
to a vertex at Level 3. 

From \cite[Section 3]{Mbirika:2010} (cf. in particular \cite[Definition
3.1.9]{Mbirika:2010}) it can be seen that for the case of the Peterson
Hessenberg function, the corresponding $h$-tableau tree at Level 2 has
vertices $\bullet \, 2 \, 1 \, \bullet$ and $1 \bullet 2 \, \bullet$, whereas for
the $334$-type Hessenberg function, the Level 2 vertices have the form
$\bullet \, 2 \bullet 1 \, \bullet$ and $\bullet \, 1 \bullet 2 \, \bullet$. Here
the bullets indicate the locations of the $h$-permissible positions
available for the placement of the next index $3$, in the sense of
\cite[Section 3]{Mbirika:2010} (cf. in particular \cite[Lemma
3.1.8]{Mbirika:2010}). In particular, since we saw above that the edges going
down from Level 3 onwards are identical in both the Peterson and
$334$-type Hessenberg case, it follows that the branches of the tree
emanating downwards from the two Level 3 vertices $3 \, 2 \, 1 \,
\bullet$, $2 \, 
1 \, 3 \, \bullet$ 
(coming from $\bullet \, 2 \bullet 1 \, \bullet$) and the two vertices
$1 \bullet 3 \, 2 \, \bullet$, $1 \, 2 \bullet 3 \, \bullet$ (coming
from $\bullet \, 1 \bullet 2 \, \bullet$) are identical to
the corresponding branches in the $h$-tableau tree for the Peterson
Hessenberg function. Hence all
permissible fillings at the final Level $n$ of these branches are
of Peterson type. In contrast, the branches emanating from $2 \bullet
3 \, 1 \, \bullet$
and $\bullet \, 3 \, 1 \, 2 \, \bullet$ do not appear in the Peterson
$h$-tableau tree, and none of the fillings appearing at Level
$n$ in these branches can be Peterson permissible fillings since a $3$ appears directly
before a $1$. Hence it is precisely these branches which account for
the permissible fillings which are not of Peterson type. As noted above, the rest
of the branch only has 2 edges going down from each vertex with
$h$-permissible positions determined exactly as in the Peterson
case. In particular, except for the exceptional $3$ appearing directly
to the left of a $1$, the fillings must consist of decreasing
staircases and all possible arrangements of decreasing staircases do appear. The result follows.

\end{proof}

\begin{example}
  Suppose $n=8$. Then $w = 54312876$ is an example of a $312$-type
  permissible filling where $w'=54$ and $w'' = 876$. An example of a
  $231$-type permissible filling is $w = 25431876$ where $w'=54$ and
  $w''=876$. Neither of these are permissible with respect to the
  Peterson Hessenberg function $h'$ since a $3$ appears directly to
  the left of a $1$. Nevertheless, both of these fillings are closely
  related to the Peterson-type permissible filling $w = 54321876$;
  this relationship is closely analyzed and used below.
\end{example}

We now give explicit descriptions of the corresponding
non-Peterson-type elements in
$\Hess(h)^{S^1}$, obtained by taking inverses of the permissible
fillings described in Lemma~\ref{lemma:non-Peterson fixed points}. 

\begin{definition}\label{definition:nonPeterson fixed points}
  Let $w \in \Hess(h)^{S^1}$. We say $w$ is a \textbf{$312$-type (respectively
  $231$-type) fixed point}
  if its inverse $w^{-1}$ is a permissible filling of 
  $312$-type (respectively $231$-type). 
\end{definition}

As observed above, 
since Peterson-type permissible fillings are equal to their own
inverses, in that case there is no distinction between the fillings
and their associated fixed points. For the $312$ and $231$-types,
however, this is not the case. We record the following. The proof is a 
straightforward computation and is left to the reader.

\begin{lemma}\label{lemma:one line notation for nonPeterson}
Let $w$ be a $312$-type (respectively $231$-type) 
permissible filling. Let $a_2$ be the integer such that $a_2+1$ is the
first entry (respectively second entry) in the one-line notation of
$w$. Let $w^{-1}$ be the corresponding $312$-type (respectively $231$
type) fixed point. Then: 
\begin{itemize}
\item the one-line
  notation of $w^{-1}$ is the same as that of $w$ for all $\ell$-th entries
  with $\ell > a_2+1$, 
\item if $w$ is $312$-type,  then the first $a_2+1$ entries of the
  one-line notation of $w^{-1}$ are
\begin{equation}\label{eq:312 one line notation}
a_2 \hsm a_2+1 \hsm a_2-1 \hsm a_2 - 2 \hsm \cdots \hsm 2 \hsm 1
\end{equation}
\item if $w$ is $231$-type, then the first $a_2+1$ entries of the
  one-line notation of $w^{-1}$ are
\begin{equation}\label{eq:231 one line notation}
a_2+1 \hsm 1 \hsm a_2 \hsm a_2 - 1 \hsm \cdots 3 \hsm 2 
\end{equation}
\end{itemize}
\end{lemma}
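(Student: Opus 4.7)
The plan is to read off $w^{-1}$ directly from the explicit one-line notation for $w$ furnished by Lemma~\ref{lemma:non-Peterson fixed points}. First I would record the key structural observation: since $w'\,3$ is a staircase of length $a_2 - 1$ (so $w' = (a_2{+}1)(a_2)\cdots 4$ when nonempty), the block $w'\,3\,1\,2$ in the $312$-type case (respectively $2\,w'\,3\,1$ in the $231$-type case) occupies the first $a_2+1$ positions and is a permutation of the set $\{1,2,\ldots,a_2+1\}$, while $w''$ is a permutation of the complementary set $\{a_2+2,\ldots,n\}$. Consequently $w$ preserves the partition $\{1,\ldots,a_2+1\} \sqcup \{a_2+2,\ldots,n\}$, and the same then holds for $w^{-1}$, which reduces the problem to analyzing each block separately.

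For the first bullet, I would note that within the block $\{a_2+2,\ldots,n\}$ the permutation $w''$ is an increasing sequence of staircases. Each individual staircase $(j+k)(j+k-1)\cdots j$ is the order-reversal of a consecutive set of values placed in the corresponding consecutive set of positions, hence an involution; and consecutive staircases act on disjoint blocks of positions and disjoint blocks of values (this is exactly the ``increasing'' condition). Therefore the restriction of $w$ to positions $\ell > a_2+1$ is its own inverse, proving that $w$ and $w^{-1}$ agree on all such entries.

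For the remaining two bullets I would compute $w^{-1}(j)$ for $1 \leq j \leq a_2+1$ by locating the position of $j$ in the explicit one-line notation of $w$. In the $312$-type case $w$ reads $(a_2{+}1)(a_2)\cdots 4\,3\,1\,2$ on its first $a_2+1$ entries, so inspection gives $w^{-1}(1)=a_2$, $w^{-1}(2)=a_2+1$, and $w^{-1}(j) = a_2+2-j$ for $3 \leq j \leq a_2+1$; reading these off in order yields the sequence $a_2,\,a_2{+}1,\,a_2{-}1,\,a_2{-}2,\,\ldots,\,2,\,1$ claimed in~\eqref{eq:312 one line notation}. The $231$-type case is entirely parallel: $w$ reads $2\,(a_2{+}1)(a_2)\cdots 4\,3\,1$ on its first $a_2+1$ entries, so $w^{-1}(1)=a_2+1$, $w^{-1}(2)=1$, and $w^{-1}(j) = a_2+3-j$ for $3 \leq j \leq a_2+1$, giving~\eqref{eq:231 one line notation}.

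There is essentially no real obstacle here; the lemma is a bookkeeping unpacking of the explicit forms in Lemma~\ref{lemma:non-Peterson fixed points}. The only points requiring mild care are (i) verifying that the block decomposition into $\{1,\ldots,a_2+1\}$ and $\{a_2+2,\ldots,n\}$ is respected by $w$ (and hence by $w^{-1}$), which uses that $w'\,3$ is a staircase \emph{starting} at $a_2+1$, and (ii) the observation that any decreasing staircase appearing in $w''$ is an involution on its block of values, which is what forces $w$ and $w^{-1}$ to agree past position $a_2+1$.
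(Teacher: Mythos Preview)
Your proof is correct and is exactly the straightforward computation the paper has in mind; in fact the paper does not supply a proof at all, stating only that ``the proof is a straightforward computation and is left to the reader.'' Your block decomposition into $\{1,\ldots,a_2+1\}$ and $\{a_2+2,\ldots,n\}$, the observation that each decreasing staircase in $w''$ is an involution, and the direct reading-off of $w^{-1}(j)$ from the explicit form of $w$ together constitute precisely such a computation.
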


In the case of the Peterson variety, there is a convenient bijective correspondence
between the set of $S^1$-fixed points of the Peterson variety 
and subsets $\mathcal{A}$ of $\{1,2,\ldots,n-1\}$ given as follows
\cite[Section 2.3]{HarTym09}.
Let $w$ be a Peterson-type fixed point.
Then the corresponding subset is 
\begin{equation}\label{eq:Peterson subset} 
\mathcal{A} := \{i : 1 \leq i \leq n-1 \textup{ and } w(i) = w(i + 1) + 1 \}
\subseteq⊆ \{1, 2, \ldots , n-1\}. 
\end{equation}
In the case of the $334$-type Hessenber variety, it is also useful to
assign a subset of $\{1,2,\ldots,n-1\}$ to each fixed point as
follows.

\begin{definition}\label{definition: associated subset}
Let $w \in \Hess(h)^{S^1}$. The \textbf{associated subset of
  $\{1,2,\ldots,n\}$ corresponding to
  $w$}, notated $\mathcal{A}(w)$, is defined as follows: 
\begin{itemize}
\item Suppose $w$ is of Peterson type. Then $\mathcal{A}(w)$
  is defined to be the set $\mathcal{A}$ in~\eqref{eq:Peterson subset}. 
\item Suppose $w$ is $312$-type. Consider the permutation  
  $w' := w s_1$ (i.e. swap the $a_2$ and the $a_2+1$ in the one-line
  notation~\eqref{eq:312 one line notation}). This is a fixed point of Peterson type. Define
  $\mathcal{A}(w) := \mathcal{A}(w')$.
 \item Suppose $w$ is $231$-type. Consider the permutation 
\[
w' = w s_2 s_3 \cdots s_{a_2}
\]
(i.e. move the $1$ to the right of the $2$ in the one-line
notation~\eqref{eq:231 one line notation}). This is a fixed point of Peterson type. Define
  $\mathcal{A}(w) := \mathcal{A}(w')$. 
\end{itemize}
\end{definition}

\begin{example}\label{example:Aw}
Suppose $n=8$. 
\begin{itemize}
\item Suppose $w$ is the Peterson-type fixed point $w =
  54321876$. Then $\mathcal{A}(w) = \{1,2,3,4\} \cup \{6,7\}$. This
  agrees with the association $w \mapsto \mathcal{A}(w)$ used in
  \cite{HarTym09}. 
\item Suppose $w$ is the $312$-type fixed point $w = 34217658$
  (corresponding to the $312$-type permissible filling
  $43127658$). Then $w' = w s_1 = 43217658$ and $\mathcal{A}(w) :=
  \mathcal{A}(w') = \{1,2,3\} \cup \{5,6\}$.
\item Suppose $w$ is the $231$-type fixed point $w =
  51432768$ (corresponding to the $231$-type permissible filling
  $25431768$). 
Then $w' = 54321768$ and \(\mathcal{A}(w) :=
  \mathcal{A}(w') = \{1,2,3,4\}\cup \{6\}.\) 
\end{itemize}
\end{example} 

\begin{remark}\label{remark: not injective} 
The three fixed points $w =
54321876$, $w = 45321876$, and $w = 51432876$, which are respectively of Peterson type, $312$
type, and $231$-type, all have the same
associated subset $\mathcal{A}(w) = \{1,2,3,4\} \cup
\{6,7\}$. 
\end{remark}

It is useful to observe that the $312$-type and $231$-type fixed points
have associated subsets that always contain $1$ and $2$. 

\begin{lemma}\label{lemma:contains 1 and 2}
Let $w$ be a $334$-type Hessenberg fixed point. Suppose
further that $w$ is not of Peterson type. Then $\{1,2\} \subseteq \mathcal{A}(w)$.
\end{lemma}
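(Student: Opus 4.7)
The plan is to handle the two cases of non-Peterson-type fixed points separately. By Lemma~\ref{lemma:non-Peterson fixed points} and Definition~\ref{definition:nonPeterson fixed points}, $w$ is either of $312$-type or of $231$-type, and in each case Definition~\ref{definition: associated subset} produces a Peterson-type permutation $w'$ by explicitly prescribed right-multiplications by simple transpositions, with $\mathcal{A}(w) := \mathcal{A}(w')$. Since $\mathcal{A}(w')$ is read off from consecutive descents of size one in the one-line notation of $w'$ via~\eqref{eq:Peterson subset}, the strategy is simply to show that in both cases the one-line notation of $w'$ begins with a decreasing staircase of length at least three; this will immediately force $1, 2 \in \mathcal{A}(w')$.

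For the $312$-type case, observe first that the structural description of a $312$-type filling $w^{-1} = w' \hsm 3 \hsm 1 \hsm 2 \hsm w''$ (from Lemma~\ref{lemma:non-Peterson fixed points}) forces the first entry of $w^{-1}$ to be at least $3$, so the integer $a_2$ appearing in Lemma~\ref{lemma:one line notation for nonPeterson} satisfies $a_2 \geq 2$. By that lemma, the first $a_2+1$ entries of $w$ are $a_2, a_2+1, a_2-1, a_2-2, \ldots, 2, 1$. Applying $s_1$ on the right swaps the first two entries, so the first $a_2+1$ entries of $w' = w s_1$ are $a_2+1, a_2, a_2-1, \ldots, 2, 1$, a single decreasing staircase of length $a_2+1 \geq 3$. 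Since $w'$ is Peterson type, \eqref{eq:Peterson subset} then gives $\{1, 2, \ldots, a_2\} \subseteq \mathcal{A}(w')$, and in particular $\{1,2\} \subseteq \mathcal{A}(w) = \mathcal{A}(w')$.

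For the $231$-type case the argument is parallel. The structural form $2 \hsm w' \hsm 3 \hsm 1 \hsm w''$ of the filling again forces $a_2 \geq 2$. By Lemma~\ref{lemma:one line notation for nonPeterson}, the first $a_2+1$ entries of $w$ are $a_2+1, 1, a_2, a_2-1, \ldots, 3, 2$. Right-multiplication by $s_2 s_3 \cdots s_{a_2}$ slides the entry $1$ past each of $a_2, a_2-1, \ldots, 2$ in sequence, producing first $a_2+1$ entries $a_2+1, a_2, a_2-1, \ldots, 2, 1$ for $w'$, again a staircase of length at least three, which gives $\{1,2\} \subseteq \mathcal{A}(w')=\mathcal{A}(w)$.

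Neither step presents a serious obstacle; the whole proof is a bookkeeping exercise using the explicit formulas of Lemma~\ref{lemma:one line notation for nonPeterson} and Definition~\ref{definition: associated subset}. The only thing that requires a small amount of care is the verification that $a_2 \geq 2$ in each case, which amounts to noting that the defining substrings $3\,1\,2$ and $2 \cdots 3\,1$ in the filling contain a $3$ in the relevant initial position, so that the quantity "$a_2+1$" defined in Lemma~\ref{lemma:one line notation for nonPeterson} is at least $3$; this is what ensures the staircase produced in $w'$ is long enough to place both $1$ and $2$ in $\mathcal{A}(w')$.
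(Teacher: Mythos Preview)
Your proof is correct and follows essentially the same approach as the paper: in each of the two non-Peterson cases you verify that $a_2 \geq 2$, deduce that the associated Peterson-type permutation $w'$ begins with a decreasing staircase of length at least three, and conclude that $\{1,2\} \subseteq \mathcal{A}(w') = \mathcal{A}(w)$. Your version spells out the bookkeeping (the explicit effect of the right-multiplications on the one-line notation and the reason $a_2 \geq 2$) more carefully than the paper does, but the argument is the same.
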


\begin{proof}
  From the explicit descriptions of the one-line notation of the $312$
  type (respectively $231$-type) fixed points given above, we know that the
  initial segment $a_2 \, a_2+1 \, \cdots \, 2 \, 1$ (respectively
  $a_2+1 \, 1 \, a_2 \, \cdots \, 3 \, 2$) in the one-line notation is such that $a_2 \geq
  2$. From Definition~\ref{definition: associated subset} it follows
  that the first decreasing staircase of the associated Peterson-type
  fixed point $ws_1$ (respectively $w s_2 s_3 \cdots s_{a_2}$) is of
  length at least $3$.  In particular, the first staircase starts with
  an integer $k$ which is $\geq 3$. The result follows.
\end{proof}

As noted in Remark~\ref{remark: not injective},
the association $w \mapsto \mathcal{A}(w)$ given in
Definition~\ref{definition: associated subset} is \emph{not} 
one-to-one and hence in particular not a bijective correspondence. This makes our analysis more
complicated than in \cite{HarTym09}, but the notion is still useful for our arguments below.

\subsection{Reduced word decompositions for $334$-type fixed points
  and rolldowns}

In this section we fix particular choices of reduced word
decompositions for the fixed points in $\Hess(h)^{S^1}$ which we use
in our arguments below. We also compute, and fix choices of
reduced words for, the rolldowns $\roll(w)$ of the fixed points. 

The association $w \mapsto \mathcal{A}(w)$ of the previous section
allows us to describe these reduced word decompositions in relation to
that of the Peterson-type fixed points.  Let $a$ be a positive integer
and $k$ a non-negative integer. Recall that a reduced word
decomposition of the maximal element (the full inversion) 
in the subgroup $S_{\{a,a+1,\ldots,a+k+1\}}
\subseteq S_n$
is given by 
\begin{equation}\label{eq:standard reduced word}
s_a (s_{a+1} s_a) (s_{a+2} s_{a+1} s_a) \cdots (s_{a+k} s_{a+k-1} \cdots s_{a+1} s_a).
\end{equation}
For the purposes of this manuscript, we call this the \emph{standard
  reduced word (decomposition)} for the maximal element. (This is 
different from the choice of reduced word decomposition used in
\cite[Section 2.3]{HarTym09}.) 
We denote a consecutive set of integers $\{a,
a+1,\ldots, a+k\}$ for $a$ positive and $k$ a non-negative
integer by $[a, a+k]$. We say that $[a,a+k]$ is a \textbf{maximal consecutive
substring} of $\mathcal{A}$ if $[a,a+k] \subseteq \mathcal{A}$ and
neither $a-1$ nor $a+k+1$ are in $\mathcal{A}$. It is straightforward
that any subset $\mathcal{A}$ of $\{1,2,\ldots,
n-1\}$ uniquely decomposes into a disjoint union of maximal
  consecutive substrings
\begin{equation}\label{eq:A into substrings}
{\mathcal{A}} = [a_1, a_2] \cup [a_3,a_4] \cup \cdots \cup
[a_{m-1},a_m].
\end{equation}
For instance, for $\mathcal{A} = \{1,2,3, 5,6,9,10,11\}$, the
decomposition is $\mathcal{A} = [1,3] \cup [5,6] \cup [9,11]$. For any
$[a,b]$, denote by $w_{[a,b]}$ the full inversion in the subgroup
$S_{[a,b+1]}$. Then it follows from Definition~\ref{definition:
  associated subset} (see also \cite[Section 2.3]{HarTym09}) that 
the Peterson-type fixed point associated to
$\mathcal{A}$, which we denote by $w_{\mathcal{A}}$, is the product
\begin{equation}\label{eq:wA-reduced-word} 
  w_{\mathcal{A}} := w_{[a_1, a_2]} w_{[a_3, a_4]} w_{[a_5, a_6]}
  \cdots w_{[a_{m-1}, a_m]}. 
\end{equation}
We fix a choice of reduced word
decomposition of $w_{\mathcal{A}}$ given by taking the product of the
standard reduced words~\eqref{eq:standard reduced word} for each of the full inversions $w_{[a_j,
  a_{j+1}]}$ appearing in~\eqref{eq:wA-reduced-word}. For the purposes
of this manuscript we call this the
\textbf{standard reduced word decomposition of a Peterson-type
fixed point $w_{\mathcal{A}}$}.

\begin{example}\label{example:Peterson reduced word}
  Let $n=7$ and let $w = 4321765$ be a Peterson-type fixed point.
  Then the two decreasing staircases are $4321$ and $765$, the
  associated subset $\mathcal{A}(w)$ is $\{1,2,3\} \cup \{5,6\}$ with
  maximal consecutive strings $[1,3]:=\{1,2,3\}$ and
  $[5,6]:=\{5,6\}$. The standard reduced word decomposition of $w$ is
\begin{equation}\label{eq:wprime reduced word}
w_{\{1,2,3\}\cup\{5,6\}} = w_{[1,3]}w_{[5,6]} = s_1 (s_2 s_1) (s_3 s_2 s_1) s_5 (s_6 s_5).
\end{equation}
\end{example}

We now fix a reduced word decomposition of the non-Peterson-type
fixed points.

\begin{lemma}\label{lemma:nonPeterson reduced words}
Let $w \in \Hess(h)^{S^1}$ be a fixed point which is not of Peterson
type and let $\mathcal{A}(w) = [a_1, a_2] \cup [a_3, a_4] \cup \cdots
\cup [a_{m-1}, a_m]$ be the associated subset with its decomposition
into maximal consecutive substrings.
\begin{itemize} 
\item If $w$ is $312$-type then a reduced word decomposition for $w$ is given by 
\begin{equation}\label{eq:312 fixed point reduced word}
s_1 (s_2 s_1) \cdots (s_{a_2} s_{a_2-1} \cdots s_3 s_2) w_{[a_3,a_4]}
\cdots w_{[a_{m-1},a_m]}
\end{equation}
and 
\item if $w$ is $231$-type then a reduced word
  decomposition for $w$ is given by
\begin{equation}\label{eq:231 fixed point reduced word}
   s_2 (s_3 s_2) \cdots (s_{a_2-1} s_{a_2-2} \cdots s_3 s_2) (s_{a_2}
   s_{a_2-1} \cdots s_2 \hsm s_1) w_{[a_3,a_4]}
\cdots w_{[a_{m-1},a_m]} 
\end{equation}
\end{itemize} 
where the $w_{[a_\ell, a_{\ell+1}]}$ in the above expressions are assumed to be given the reduced word
decomposition described in~\eqref{eq:standard reduced word}.
\end{lemma}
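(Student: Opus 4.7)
The plan is to separate the proposed reduced word into two pieces --- the initial segment built from $s_1, \ldots, s_{a_2}$ and the remaining ``Peterson tail'' $w_{[a_3, a_4]} \cdots w_{[a_{m-1}, a_m]}$ --- and to handle each piece independently.

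First I would observe that because $a_2+1 \notin \mathcal{A}(w)$ (by definition of the decomposition of $\mathcal{A}(w)$ into maximal consecutive substrings), the first index of the next substring satisfies $a_3 \geq a_2 + 2$. Consequently every simple transposition $s_j$ appearing in the tail has $j \geq a_2+2$, so it commutes with each $s_i$ for $i \leq a_2$ and moreover acts on a disjoint set of positions from the initial factor. This reduces both the ``gives the correct one-line notation'' and the ``is reduced'' claims to the corresponding claims for each piece separately. The tail is already known to be a reduced word expression for the Peterson-type permutation supported on positions $\{a_3, \ldots, a_m+1\}$ by~\eqref{eq:wA-reduced-word} and the Peterson analysis in~\cite[Section 2.3]{HarTym09}.

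Next, for the initial factor in the $312$-type case, I would point out that $s_1(s_2 s_1) \cdots (s_{a_2-1} s_{a_2-2} \cdots s_2 s_1)$ is the standard reduced word for the long element $w_0$ of $S_{a_2} \subseteq S_n$, producing the prefix $a_2, a_2-1, \ldots, 2, 1, a_2+1$ in positions $1$ through $a_2+1$. I would then apply the trailing factor $(s_{a_2} s_{a_2-1} \cdots s_3 s_2)$ one simple transposition at a time, verifying that each application migrates the entry $a_2+1$ one position to the left while shifting one smaller entry one slot to the right; the resulting prefix is precisely $a_2, a_2+1, a_2-1, \ldots, 2, 1$, matching~\eqref{eq:312 one line notation}. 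For the $231$-type case I would run a parallel argument: the factors $s_2(s_3 s_2) \cdots (s_{a_2-1} s_{a_2-2} \cdots s_3 s_2)$ form the standard reduced word for the long element of $S_{\{2, 3, \ldots, a_2\}}$, producing the prefix $1, a_2, a_2-1, \ldots, 2, a_2+1$, and the final factor $(s_{a_2} s_{a_2-1} \cdots s_2 s_1)$ then transports $a_2+1$ leftward into position $1$ while forcing the entry $1$ into position $2$, giving the prefix~\eqref{eq:231 one line notation}.

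Finally I would verify reducedness by a length count. In the $312$ case the initial factor contains $\binom{a_2}{2} + (a_2-1)$ simple transpositions, and a direct inversion count on the prefix $a_2, a_2+1, a_2-1, \ldots, 2, 1$ yields the same total; similarly the $231$ initial factor contains $\binom{a_2-1}{2} + a_2$ transpositions, matching the inversion count for $a_2+1, 1, a_2, \ldots, 2$. Combined with reducedness of the Peterson tail and the commutation/disjointness observation, this shows that the total number of simple transpositions in the proposed word equals $\ell(w)$, so the expression is reduced. The main technical obstacle will be the bookkeeping in the middle paragraph: the trailing factor performs a sequence of adjacent swaps that must be tracked carefully to confirm the intermediate one-line notations; once that computation is in hand, the commutation with the tail is immediate from $a_3 \geq a_2+2$, and the inversion counts are routine.
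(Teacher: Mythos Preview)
Your argument is correct and follows essentially the same strategy as the paper's proof: verify that the given word equals $w$ and then confirm reducedness by a length/inversion count. The paper is simply terser---it notes that a $312$-type (respectively $231$-type) fixed point has exactly one fewer inversion than $w_{\mathcal{A}(w)}$, observes that the displayed word computes to $w$, and concludes reducedness because the word uses exactly $\ell(w)$ simple transpositions; your explicit splitting into initial factor and Peterson tail, together with the direct inversion counts on the prefixes, supplies the details the paper leaves implicit.
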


\begin{proof}
For the first assertion, observe that the explicit description of
the one-line notation $312$-type fixed points in~\eqref{eq:312
  one line notation} implies that $w$ has precisely $1$ fewer
    inversion than $w_{\mathcal{A}(w)}$. An explicit computation shows
    that the given word~\eqref{eq:312 fixed point reduced word} is equal to $w$, so it is a word
    decomposition of $w$ with exactly as many simple transpositions as
    the Bruhat length of $w$. In particular it must be reduced. A
    similar argument proves the second assertion. 
\end{proof}

\begin{example}\label{example:nonPeterson reduced word}
Suppose $n=7$. Suppose $w = 3421765$ is a $312$-type fixed point.
Then the reduced word decomposition of $w$ given in
Lemma~\ref{lemma:nonPeterson reduced words} is 
\[
w = s_1 (s_2 s_1) (s_3 s_2) s_5 (s_6 s_5). 
\]
Similarly
suppose $w = 4132765$ is a $231$-type fixed point. Then the
reduced word decomposition of $w$ given in Lemma~\ref{lemma:nonPeterson
  reduced words} is 
\[
w = s_2 (s_3 s_2 s_1) s_5 (s_6 s_5). 
\]
\end{example}

Henceforth we always use the reduced words given above.

Next we explicitly describe the rolldowns
$\roll(w)$ associated
to each $w$ in $\Hess(h)^{S^1}$ by the dimension pair algorithm. 
We begin with the Peterson-type
fixed points. It turns out there are two important subcases of
Peterson-type fixed points. 

\begin{definition}\label{def:contains or not}
We say that a Peterson-type fixed point $w$
\textbf{contains the string $321$} (or simply \textbf{contains $321$}) if, in the one-line notation of $w$, the
string $321$ appears (equivalently, if $\{1,2\} \subseteq
\mathcal{A}(w)$). We say $w$ \textbf{does not contain the string $321$} (or simply \textbf{does not contain $321$})
otherwise. 
\end{definition} 

\begin{remark}
Note that Definition~\ref{def:contains or not} is different from the standard notion of pattern-containing or pattern-avoiding permutations since here we require the one-line notation of $w$ to contain the string $321$ exactly.
\end{remark}

Given a subset $\mathcal{A} = \{j_1 < j_2 < \cdots <
j_k\} \subseteq \{1,2,\ldots, n-1\}$ and corresponding Peterson-type
fixed point $w_{\mathcal{A}}$, we call the 
permutation
\begin{equation}\label{eq:Peterson case rolldown}
s_{j_k} s_{j_{k-1}} \cdots s_{j_2} s_{j_1} \in S_n
\end{equation}
the \textbf{Peterson case rolldown of $w_{\mathcal{A}}$.} Note that the
word~\eqref{eq:Peterson case rolldown} is in fact a reduced word
decomposition of this permutation; we always use this choice of
reduced word. The terminology is motivated by the fact
that~\eqref{eq:Peterson case rolldown} is the (inverse of the)
permutation given in \cite[Definition 4.1]{HarTym09}.
(The fact that it is the inverse of 
the permutation used in \cite{HarTym09} does not affect the theory very much, as is
explained in \cite[Proposition 5.16]{HarTym09}.)

\begin{lemma}\label{lemma:rolldowns for Petersons}
Let $n \geq 4$ and $\Hess(h)$ the $334$-type Hessenberg variety in $\Flags(\C^n)$. 
Let $w$ be a Peterson-type
fixed point and let $\mathcal{A}(w) = \{j_1 < j_2 < \cdots < j_k\}$ be its
associated subset. 
\begin{itemize}
\item  Suppose $w$ does not contain $321$.
Then 
 $\roll(w)$ 
is the Peterson case rolldown of
  $w_{\mathcal{A}(w)}$. 
\item Suppose $w$ does contain $321$, i.e., $\mathcal{A}(w) = \{j_1 <
  j_2 < \cdots < j_k\}$ for $k \geq 2$ and $j_1 = 1$ and $j_2 =
  2$. Then $\roll(w)$ 
is
\begin{equation}\label{eq:321 rolldown} 
\roll(w) = s_{j_k} s_{j_{k-1}} \cdots s_{j_3} s_1 s_2 s_1. 
\end{equation}
\end{itemize}
In particular, if a Peterson-type fixed point $w$ contains $321$,  
then its rolldown $\roll(w)$ is Bruhat-greater, and has Bruhat length $1$
greater, than the Peterson case
rolldown of $w$. 
\end{lemma}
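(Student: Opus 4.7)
The plan is to directly execute the dimension pair algorithm using the staircase structure of Peterson-type fixed points. Since $w$ is a product of disjoint involutions (one per increasing staircase), we have $w = w^{-1}$, so by Proposition~\ref{proposition:fixed points and fillings} the permissible filling $\phi_\lambda^{-1}(w^{-1})$ of $\lambda = (n)$ is just the one-line notation of $w$ read left-to-right. Computing $\roll(w) = \omega(\mathbf{x})^{-1}$ thus reduces to enumerating the dimension pairs of this filling and tallying top parts to determine $\mathbf{x}$.

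Let $w$ decompose into increasing staircases $S_1, S_2, \ldots, S_r$, where $S_i$ occupies the consecutive positions $[P_i, P_i + |S_i| - 1]$ and contains the consecutive values $\{m_i, m_i+1, \ldots, M_i\}$ written in decreasing order, with $m_i = P_i$. The core of the proof is a case analysis on a candidate bottom $a$. The key observation is that whenever $a$ is \emph{not} the last entry of its staircase, the right neighbor of $a$ has value $c = a - 1$, and the $334$-type Hessenberg function satisfies $h(c) \leq a$ for all such $c$ (specifically $h(c) = 3$ if $c \in \{1,2\}$, and $h(c) = c+1 = a$ if $c \geq 3$); combined with the requirement $\ell > a$, this eliminates every candidate dimension pair except the single exception $a = 2$ with $c = 1$, where $h(1) = 3$ admits $\ell = 3$. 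Such a pair $(2, 3)$ arises exactly when $2$ immediately precedes $1$ in $w$ and $3$ sits to the left of $2$, equivalently when $|S_1| \geq 3$, equivalently when $w$ contains $321$. When $a = m_i$ is the last entry of a staircase $S_i$ with $|S_i| \geq 2$, condition~(3) of Definition~\ref{definition:dimension pair} is automatic (vacuous if $i = r$ and $a$ sits in position $n$, otherwise satisfied because the right-neighbor value $M_{i+1}$ exceeds every candidate $\ell \leq M_i$), so all pairs $(m_i, m_i+1), \ldots, (m_i, M_i)$ are valid. Starts of later staircases contribute nothing because all values to their left are strictly smaller than $M_i$. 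Consequently the multiset of top parts is precisely $\{j+1 : j \in \mathcal{A}(w)\}$, each with multiplicity one, plus one additional copy of $3$ in the contains-$321$ case.

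Substituting these values of $x_\ell$ into~(\ref{eq:permutation from top parts}) and inverting produces both claimed formulas: in the no-$321$ case each $u_{j+1} = s_j$ for $j \in \mathcal{A}(w)$ while all other $u_\ell = 1$, giving $\omega(\mathbf{x}) = s_{j_1} \cdots s_{j_k}$ and hence $\roll(w) = s_{j_k} \cdots s_{j_1}$, the Peterson case rolldown; in the contains-$321$ case the doubled $x_3 = 2$ replaces $u_3 = s_2$ by $u_3 = s_2 s_1$, yielding $\roll(w) = s_{j_k} \cdots s_{j_3} s_1 s_2 s_1$. The Bruhat-length comparison is then immediate from Lemma~\ref{lemma:betti acceptable}, which identifies $\ell(\roll(w))$ with the number of dimension pairs: $k+1$ in the contains-$321$ case against $k$ for the Peterson case rolldown. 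The Peterson case rolldown word $s_{j_k} \cdots s_{j_3} s_2 s_1$ is obtained from the contains-$321$ rolldown word $s_{j_k} \cdots s_{j_3} s_1 s_2 s_1$ by deleting the leftmost $s_1$, so the subword property of Bruhat order furnishes the required inequality. I expect the main technical obstacle to be the careful bookkeeping in the case analysis above — in particular, verifying that the $334$-type Hessenberg constraint eliminates \emph{all} potential dimension pairs originating from interiors or starts of staircases beyond the single exceptional pair $(2, 3)$.
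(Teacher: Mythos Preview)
Your proposal is correct and follows essentially the same approach as the paper's proof: both compute the dimension pairs of the one-row filling directly, identify the extra pair $(2,3)$ that appears exactly when $w$ contains $321$, and read off $\omega(\mathbf{x})$ from the resulting top-part vector. Your version is more self-contained---you carry out the full case analysis on the position of $a$ within the staircases, whereas the paper handles only the indices $1,2,3$ explicitly and dispatches the remaining indices with the one-line remark that ``the $334$-type Hessenberg function is identical to the Peterson Hessenberg function'' there---but the underlying argument is the same.
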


\begin{proof} 
If $w$ contains a $321$, then by
Definition~\ref{definition:dimension pair}, the pairs $(1,3), (2,3)$ and
$(1,2)$ are all dimension pairs in $w$. Hence $3$ appears precisely twice as a top part of
a dimension pair and $2$ appears precisely once. Thus 
by construction the dimension pair algorithm the permutation
$\omega(\mathbf{x})$ begins with the word $s_1 (s_2 s_1)$. With respect to all other indices $j
\in \mathcal{A}(w)$, the $334$-type Hessenberg function is identical
to the Peterson Hessenberg function and hence for each such $j$, the
index $j+1$ appears
precisely once as a top part of a dimension pair of $w$ and thus
contributes precisely one $s_j$ to $\omega(\mathbf{x})$. 
Taking the inverse 
yields~\eqref{eq:321 rolldown} as desired. 

If $w$ does not contain $321$, then $3$ appears
at most once as the top part of a dimension pair in $w$, and again for
all other indices the computations are identical to the Peterson case
as above. Hence $\roll(w)$ is identical to the
Peterson case rolldown. This completes the proof. 
\end{proof}

Next, we give an explicit description, along with a choice of reduced word
decomposition, of the rolldowns corresponding to the non-Peterson-type
fixed points.

\begin{lemma}\label{lemma:rolldowns for nonPetersons}
Let $w \in \Hess(h)^{S^1}$ and suppose
that $w$ is not of Peterson type. Let $\mathcal{A}(w) = \{j_1 =1 <
j_2=2 < j_3 < \cdots < j_k\}$ for some $k \geq 2$. 
\begin{enumerate}
\item If $w$ is of 
$312$-type, then 
 the dimension pair algorithm 
  associates to $w$ the permutation
\begin{equation}\label{eq:rolldown for 312}
\roll(w) = s_{j_k} s_{j_{k-1}} \cdots s_{j_4} s_{j_3} s_1 s_2. 
\end{equation}
\item If $w$ is $231$-type, then  the dimension pair algorithm 
  associates to $w$ the permutation
\begin{equation}\label{eq:rolldown for 231}
\roll(w) = s_{j_k} s_{j_{k-1}} \cdots s_{j_4} s_{j_3} s_2 s_1. 
\end{equation}
\end{enumerate}
\end{lemma}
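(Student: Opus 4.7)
My plan is to compute $\roll(w)$ directly from the dimension pair algorithm. By definition $\roll(w) = \omega(\mathbf{x})^{-1}$, where $\mathbf{x}=(x_2,\ldots,x_n)$ is the list of top parts of dimension pairs in the permissible filling $\phi_{\lambda}^{-1}(w^{-1}) = w^{-1}$ (using Remark~\ref{remark:PF_w in case N principal}). Lemmas~\ref{lemma:non-Peterson fixed points} and~\ref{lemma:one line notation for nonPeterson} pin down the exact one-line notation of $w^{-1}$: in the 312-type case the first $a_2+1$ entries are $a_2+1, a_2, \ldots, 4, 3, 1, 2$ (where the initial run $a_2+1, a_2, \ldots, 4$ is the staircase $w'$, empty when $a_2 = 2$), while in the 231-type case they are $2, a_2+1, a_2, \ldots, 4, 3, 1$; in both cases the remaining entries form an increasing sequence of staircases $w''$ on the values $\{a_2+2, \ldots, n\}$.

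The key structural observation is that $w^{-1}$ splits into a \emph{left segment} at positions $1$ through $a_2+1$ (holding exactly the values $\{1,2,\ldots,a_2+1\}$) and a \emph{right segment} at positions $a_2+2$ through $n$ (holding exactly the values $\{a_2+2,\ldots,n\}$). Since every right-segment value strictly exceeds every left-segment value, the requirement $b>a$ in Definition~\ref{definition:dimension pair} prevents any dimension pair from straddling the two segments. Moreover the right segment is literally identical to the corresponding tail of the Peterson-type companion filling $w_{\mathcal{A}(w)}$, so the enumeration used in the proof of Lemma~\ref{lemma:rolldowns for Petersons} applies verbatim and contributes exactly one dimension pair with top part $j_\ell+1$ for each $\ell \in \{a_2+1, \ldots, k\}$.

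What remains is a careful position-by-position enumeration of dimension pairs inside the left segment. I expect the following to emerge: the tight Hessenberg identities $h(v-1) = v$ for $v\geq 4$ and $h(1)=3$ rule out any dimension pair whose anchor $a$ lies strictly inside $w'$ or at the entry $3$; for 312-type the entry $1$ at position $a_2$ is forced by $h(2)=3$ to produce only the single pair $(1,3)$, while the entry $2$ at position $a_2+1$ satisfies $h(c)\geq a_2+2$ for the $c$ immediately to its right and therefore produces the full set of pairs $(2,b)$ with $b\in\{3,4,\ldots,a_2+1\}$; this yields $x_3 = 2$ and $x_\ell = 1$ for $\ell \in \{4, \ldots, a_2+1\}$ from the left segment. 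A parallel analysis for 231-type shows that every left-segment dimension pair has the form $(1,b)$ for $b \in \{2,3,\ldots,a_2+1\}$, giving $x_\ell = 1$ for $\ell \in \{2,\ldots,a_2+1\}$.

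Finally, combining these left-segment counts with the right-segment contribution and using the identity $\mathcal{A}(w) \cap [1, a_2] = \{1, 2, \ldots, a_2\}$ (which follows from Definition~\ref{definition: associated subset} and the fact that the first decreasing staircase of $w_{\mathcal{A}(w)}$ has length $a_2+1$), the factors in~\eqref{eq:permutation from top parts} multiply to $\omega(\mathbf{x}) = (s_2 s_1)\,s_{j_3} s_{j_4} \cdots s_{j_k}$ for the 312-type case and to $\omega(\mathbf{x}) = s_1 s_2 \, s_{j_3} s_{j_4} \cdots s_{j_k}$ for the 231-type case; taking inverses yields~\eqref{eq:rolldown for 312} and~\eqref{eq:rolldown for 231} respectively. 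The main obstacle will be the bookkeeping in the left-segment enumeration, especially verifying that no spurious dimension pair survives from within the $w'$ staircase; this should reduce cleanly to the tightness of $h$ on indices $\geq 3$.
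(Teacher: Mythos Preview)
Your proposal is correct and follows the same approach as the paper: direct enumeration of the dimension pairs in the filling $w^{-1}$, reading off the vector $\mathbf{x}$, and inverting $\omega(\mathbf{x})$. Your left/right-segment bookkeeping is in fact more careful than the paper's terse argument---the paper asserts that in the $312$ case the remaining dimension pairs (beyond $(1,3)$ and $(2,3)$) are $(j,j+1)$ for $j\in\mathcal{A}(w)\setminus\{1,2\}$, whereas your position-by-position check correctly identifies them as $(2,j+1)$ for $j\in[3,a_2]$ together with $(j,j+1)$ for $j\in\mathcal{A}(w)$ with $j>a_2$; either description yields the same top-part counts $x_\ell$, so the resulting $\roll(w)$ is unaffected.
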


\begin{proof}
  Suppose $w$ is a $312$-type fixed point so
  $\phi_\lambda^{-1}(w^{-1})$ is a $312$-type permissible filling. By
  definition of dimension pairs, $2$ does not appear as the top part
  of any dimension pair (since it appears to the right of a $1$). Also
  by definition, $3$ appears as a top part of the two dimension pairs
  $(1,3)$ and $(2,3)$. The form of the $312$-type permissible fillings
  described in Lemma~\ref{lemma:non-Peterson fixed points} and the
  definition of $\mathcal{A}(w)$ imply that the other dimension pairs
  are precisely the pairs $(j, j+1)$ for $j \in \mathcal{A}(w)$ (for
  $j \neq 1,2$), from which it follows that $\omega(\mathbf{x}) = s_2
  s_1 s_{j_3} s_{j_4} \cdots s_{j_{k-1}} s_{j_k}$. Taking inverses
  yields~\eqref{eq:rolldown for 312}. The proof of the second
  assertion is similar. 
\end{proof}

\begin{example}
  \begin{itemize}
  \item Suppose $w =  54321876$. This is of Peterson type. Then 
   $\roll(w) = (\omega(\mathbf{x}))^{-1} = s_7 s_6 s_4 s_3 s_1 s_2 s_1$. 
  \item Suppose $w=45321876$. This is $312$-type.
    Then $\roll(w) = (\omega(\mathbf{x}))^{-1} = s_7 s_6 s_4 s_3 s_1 s_2$. 
 \item Suppose $w = 51432876$. This is $231$-type. 
   Then $\roll(w) = (\omega(\mathbf{x}))^{-1} = s_7 s_6 s_4 s_3 s_2 s_1$. 
 \end{itemize}
\end{example}

We conclude the section with a computation of the one-line notation of
the rolldowns for different types; we leave proofs to the reader.

\begin{lemma}\label{lemma:one line notation for rolldowns}
  Let $w$ be a $334$-type Hessenberg fixed point and let $\mathcal{A}(w) = [a_1, a_2] \cup
\cdots \cup [a_{m-1}, a_m]$ be its associated subset with its
decomposition into maximal consecutive substrings. Suppose $w$ is of
Peterson type that contains $321$, $312$-type, or of $231$
type. Then $a_1 = 1$, $a_2 \geq 2$ and the first $a_2+1$ entires of
the one-line notation of $\roll(w)$ is 
\begin{equation}\label{eq:rolldown one line notation for Peterson 321}
a_2+1 \, 2 \, 1 \, 3 \, 4 \, \cdots \, a_2 
\end{equation}
for $w$ of Peterson type that contains $321$, 
\begin{equation}
  \label{eq:rolldown one line notation for 312}
  2 \, a_2 +1 \, 1 \, 3 \, 4 \, \cdots \, a_2 
\end{equation}
for $w$ $312$-type, and 
\begin{equation}
  \label{eq:rolldown one line notation for 231}
  a_2+1 \, 1 \, 2\, 3 \, \cdots \, a_2
\end{equation}
for $w$ $231$-type. 
\end{lemma}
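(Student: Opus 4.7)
The plan is to compute the one-line notation of $\roll(w)$ directly from the explicit reduced word decompositions recorded in Lemmas~\ref{lemma:rolldowns for Petersons} and~\ref{lemma:rolldowns for nonPetersons}. The key observation is that, because $[1, a_2]$ is the first maximal consecutive substring of $\mathcal{A}(w)$, the elements of $\mathcal{A}(w)$ satisfy $j_\ell = \ell$ for all $1 \leq \ell \leq a_2$, while $j_{a_2+1} \geq a_2 + 2$. This permits a uniform factorization in each of the three cases,
\[
\roll(w) \;=\; \bigl(s_{j_k} \cdots s_{j_{a_2+1}}\bigr) \cdot \bigl(s_{a_2} s_{a_2 - 1} \cdots s_3\bigr) \cdot \tau,
\]
where the middle factor is interpreted as the empty word when $a_2 = 2$ and $\tau$ is $s_1 s_2 s_1$, $s_1 s_2$, or $s_2 s_1$ according as $w$ is of Peterson type containing $321$, of $312$-type, or of $231$-type.

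First I would compute the one-line notation of $\tau$ alone by a direct application of the definitions of the simple transpositions: $s_1 s_2 s_1$, $s_1 s_2$, and $s_2 s_1$ permute $\{1, 2, 3\}$ to give the strings $3\,2\,1$, $2\,3\,1$, and $3\,1\,2$ respectively, and all three fix every value $\geq 4$. Second, I would apply the middle factor transpositions $s_3, s_4, \ldots, s_{a_2}$ one at a time on the left. Since $s_j$ interchanges the values $j$ and $j+1$, and the value $3$ sits at a single position $p \in \{1, 2\}$ immediately after $\tau$ is applied while $4, 5, \ldots, a_2+1$ occupy positions $4, 5, \ldots, a_2 + 1$, a simple induction shows that after $s_3, \ldots, s_{a_2}$ have all been applied, the value at position $p$ has climbed to $a_2 + 1$ while positions $4, 5, \ldots, a_2 + 1$ hold the values $3, 4, \ldots, a_2$ in order, with the values $1$ and $2$ undisturbed throughout. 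Third, the top factor consists entirely of transpositions $s_{j_\ell}$ with $j_\ell \geq a_2 + 2$, so each of them swaps only values strictly greater than $a_2 + 1$, and hence fixes every entry in positions $1, 2, \ldots, a_2 + 1$. Assembling the three stages yields~\eqref{eq:rolldown one line notation for Peterson 321},~\eqref{eq:rolldown one line notation for 312}, and~\eqref{eq:rolldown one line notation for 231}.

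The only obstacle of any substance is careful bookkeeping of where each value sits during the inductive step in which the middle factor is applied. This amounts to nothing more than tracking positions and exploiting the fact that $s_j$ touches only values $j$ and $j+1$, so no deeper structural input beyond the decompositions of Lemmas~\ref{lemma:rolldowns for Petersons} and~\ref{lemma:rolldowns for nonPetersons} is required.
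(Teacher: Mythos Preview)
The paper omits the proof of this lemma entirely, stating that it is left to the reader; your direct computation from the reduced-word factorizations in Lemmas~\ref{lemma:rolldowns for Petersons} and~\ref{lemma:rolldowns for nonPetersons} is exactly the intended argument, and the bookkeeping (tracking the value $3$ at a fixed position $p \in \{1,2\}$ as it climbs to $a_2+1$ under left-multiplication by $s_3, \ldots, s_{a_2}$, while the top factor involves only $s_j$ with $j \geq a_2+2$ and so leaves the first $a_2+1$ positions alone) is correct. One small omission: the assertions $a_1 = 1$ and $a_2 \geq 2$ are part of the conclusion of the lemma, so you should briefly justify them before using them to set up your factorization --- this is immediate from Lemma~\ref{lemma:contains 1 and 2} for the $312$- and $231$-types and from the definition of ``contains $321$'' for the Peterson type.
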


\subsection{Bruhat order relations} 

In this section we analyze the properties of the association 
\(w \mapsto \mathcal{A}(w)\) with respect to comparisons in Bruhat
order.

The first two lemmas are straightforward and proofs left to the reader. 

\begin{lemma}\label{lemma:wA maximal}
Let $\mathcal{A} \subseteq \{1,2,\ldots, n-1\}$ and let
$w_{\mathcal{A}}$ be the Peterson-type filling associated to
$\mathcal{A}$. Then $w_{\mathcal{A}}$ is maximal in the subgroup
$S_{\mathcal{A}}$ of
$S_n$ generated by the simple transpositions $\{s_i\}_{i \in
  \mathcal{A}}$. In particular, $w_{\mathcal{A}}$ is Bruhat-bigger
than any permutation $w \in S_{\mathcal{A}}$. 
\end{lemma}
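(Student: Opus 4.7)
The plan is to exploit the direct-product structure of the parabolic subgroup $S_{\mathcal{A}}$ induced by the decomposition of $\mathcal{A}$ into maximal consecutive substrings. Writing
\[
\mathcal{A} = [a_1, a_2] \cup [a_3, a_4] \cup \cdots \cup [a_{m-1}, a_m]
\]
as in~\eqref{eq:A into substrings}, for each block $[a_{2j-1}, a_{2j}]$ the generators $\{s_i : i \in [a_{2j-1}, a_{2j}]\}$ generate the symmetric group on the consecutive set $[a_{2j-1}, a_{2j}+1]$. Because the blocks are \emph{maximal} consecutive substrings of $\mathcal{A}$, there is at least one integer strictly between consecutive blocks that is missing from $\mathcal{A}$, which means the sets $[a_{2j-1}, a_{2j}+1]$ are pairwise disjoint. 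Consequently the generators from different blocks commute, and $S_{\mathcal{A}}$ factors as an internal direct product
\[
S_{\mathcal{A}} \;\cong\; S_{[a_1, a_2+1]} \times S_{[a_3, a_4+1]} \times \cdots \times S_{[a_{m-1}, a_m+1]}.
\]

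Next I would identify $w_{\mathcal{A}}$ as the longest element of this parabolic subgroup. By the definition~\eqref{eq:wA-reduced-word}, $w_{\mathcal{A}} = w_{[a_1, a_2]} w_{[a_3, a_4]} \cdots w_{[a_{m-1}, a_m]}$, where each $w_{[a_{2j-1}, a_{2j}]}$ is the full inversion in $S_{[a_{2j-1}, a_{2j}+1]}$, i.e.\ the (unique) longest element of that factor. Since Bruhat order on a product of Coxeter groups is the product of the Bruhat orders on the factors, the product of the longest elements is the longest element of the product. Thus $w_{\mathcal{A}}$ is the longest element of $S_{\mathcal{A}}$.

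The final step is standard: in any Coxeter group (or parabolic subgroup thereof) the longest element is the unique Bruhat-maximum, so $w \leq w_{\mathcal{A}}$ for every $w \in S_{\mathcal{A}}$. The only thing to be slightly careful about is the compatibility of Bruhat order on the parabolic subgroup with Bruhat order on $S_n$, but this is the well-known fact that for a standard parabolic subgroup the induced Bruhat order coincides with the restriction of the ambient Bruhat order. I do not anticipate any real obstacle here; the whole argument is essentially a bookkeeping exercise once the direct-product decomposition is made explicit, and the lemma statement itself suggests the authors regard the proof as routine.
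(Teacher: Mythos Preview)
Your argument is correct and is precisely the natural one: decompose $S_{\mathcal{A}}$ as a direct product of symmetric groups on disjoint intervals, identify $w_{\mathcal{A}}$ as the product of the longest elements of the factors (hence the longest element of $S_{\mathcal{A}}$), and invoke the standard fact that the longest element of a parabolic subgroup is its Bruhat maximum in both the intrinsic and the restricted ambient order. The paper does not actually supply a proof of this lemma---it is declared ``straightforward'' with ``proofs left to the reader''---so your write-up is exactly the kind of verification the authors expect the reader to carry out.
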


\begin{lemma}\label{lemma:nonPetersons less than associated Petersons}
  Let $w \in \Hess(h)^{S^1}$. Suppose $w$ is not of Peterson
  type. Then $w$ is Bruhat-less than the Peterson type fixed point
  $w_{\mathcal{A}(w)}$ 
  corresponding to $\mathcal{A}(w)$.
\end{lemma}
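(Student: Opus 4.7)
The plan is to realize $w_{\mathcal{A}(w)}$ explicitly as a right multiple of $w$ by a product of simple reflections and to verify that each successive multiplication strictly increases the Bruhat length. This is essentially already built into Definition~\ref{definition: associated subset}, which records that $w_{\mathcal{A}(w)} = w \cdot s_1$ when $w$ is of $312$-type, and $w_{\mathcal{A}(w)} = w \cdot s_2 s_3 \cdots s_{a_2}$ when $w$ is of $231$-type. Combining these identities with the standard length criterion $\ell(u s_i) = \ell(u) + 1 \iff u(i) < u(i+1)$ will produce a strictly ascending chain in (right) weak order, and hence in Bruhat order, from $w$ up to $w_{\mathcal{A}(w)}$.

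For the $312$-type case, Lemma~\ref{lemma:one line notation for nonPeterson} together with Lemma~\ref{lemma:contains 1 and 2} tells us that $w(1) = a_2$ and $w(2) = a_2 + 1$ with $a_2 \geq 2$, so $w(1) < w(2)$ and hence $\ell(w s_1) = \ell(w) + 1$. Since $w s_1 = w_{\mathcal{A}(w)}$, this immediately gives $w < w_{\mathcal{A}(w)}$ in Bruhat order. For the $231$-type case, I would iterate the length criterion along the chain $w, \hsm w s_2, \hsm w s_2 s_3, \hsm \ldots, \hsm w s_2 s_3 \cdots s_{a_2}$. Starting from the explicit one-line notation $a_2+1, 1, a_2, a_2-1, \ldots, 3, 2$ of $w$ on the first $a_2+1$ positions (from Lemma~\ref{lemma:one line notation for nonPeterson}), an easy induction on $k$ shows that after right-multiplying $w$ by $s_2 s_3 \cdots s_k$ the value $1$ has been shifted from position $2$ to position $k+1$, while positions $2, 3, \ldots, k$ now carry the strictly larger values $a_2, a_2-1, \ldots, a_2-k+2$. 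In particular, at each step the $1$ at position $k+1$ sits immediately to the left of an entry strictly greater than $1$, so right-multiplication by $s_{k+1}$ again increases length by one. Iterating yields a strictly increasing chain from $w$ to $w s_2 s_3 \cdots s_{a_2} = w_{\mathcal{A}(w)}$ in (right) weak order, and hence in Bruhat order.

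The argument is essentially bookkeeping with the one-line notations already established in Lemma~\ref{lemma:one line notation for nonPeterson}, so I do not anticipate a real obstacle. The one place requiring mild care is tracking how the one-line notation of $w s_2 s_3 \cdots s_k$ evolves under successive right multiplication by adjacent transpositions in the $231$-type case, but this follows immediately from the definition of the right action on one-line notation and is the content of the straightforward induction indicated above.
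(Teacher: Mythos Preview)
Your argument is correct. The paper itself leaves this lemma to the reader, so there is no proof to compare against line by line; your construction of an explicit ascending chain in right weak order from $w$ to $w_{\mathcal{A}(w)}$ is a perfectly valid way to fill the gap, and the bookkeeping in the $231$-type case is exactly as you describe.

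That said, there is a one-line route that the surrounding text seems to have set up. Lemma~\ref{lemma:wA maximal} (stated immediately before and also left to the reader) says $w_{\mathcal{A}}$ is the longest element of the parabolic subgroup $S_{\mathcal{A}}$, hence Bruhat-maximal there. The explicit reduced words for the $312$-type and $231$-type fixed points given in Lemma~\ref{lemma:nonPeterson reduced words} use only simple transpositions $s_i$ with $i \in \mathcal{A}(w)$, so $w \in S_{\mathcal{A}(w)}$, and therefore $w \leq w_{\mathcal{A}(w)}$. Strict inequality follows since $\ell(w) < \ell(w_{\mathcal{A}(w)})$ (by one in the $312$ case and by $a_2-1$ in the $231$ case, as noted in the proof of Lemma~\ref{lemma:nonPeterson reduced words}). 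Your weak-order chain actually proves more---it exhibits the covering relations---but the parabolic-maximality argument is what makes the lemma ``straightforward'' in the paper's sense.
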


We also observe that a Bruhat relation $w<w'$ implies a containment
relation of the associated subsets. 

\begin{lemma}\label{lemma:Aw and transpositions}
  Let $w, w' \in \Hess(h)^{S^1}$ and let 
  $\mathcal{A}(w), \mathcal{A}(w')$
  be the respective associated subsets. Let $s_i$ be a simple transposition. Then: 
\begin{enumerate} 
\item $s_i < w$ if and only if $i \in \mathcal{A}(w)$, 
\item $s_i < \roll(w)$ if and only if $i \in \mathcal{A}(w)$, 
\item if $w \leq w'$ or $\roll(w) \leq w'$ then $\mathcal{A}(w) \subseteq \mathcal{A}(w')$.
\end{enumerate}
\end{lemma}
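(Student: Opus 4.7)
The plan is to reduce all three parts to the subword criterion for Bruhat order, applied to the explicit reduced word decompositions of $w$ and $\roll(w)$ fixed in the preceding lemmas. Recall that the subword property states $u \leq v$ in Bruhat order if and only if some reduced word for $u$ occurs as a subword of some (equivalently, every) reduced word for $v$; in particular, for a simple reflection $s_i$, we have $s_i \leq v$ iff the letter $s_i$ appears in any reduced word of $v$.

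My first step is to establish, for part (1), that the set of simple reflections appearing in the chosen reduced word of $w$ is exactly $\{s_k : k \in \mathcal{A}(w)\}$. In the Peterson case this is immediate from~\eqref{eq:wA-reduced-word} together with the standard reduced word~\eqref{eq:standard reduced word}: each factor $w_{[a_j, a_{j+1}]}$ uses precisely the letters indexed by $[a_j, a_{j+1}]$, and the blocks are disjoint. For the $312$-type reduced word in~\eqref{eq:312 fixed point reduced word}, a direct inspection shows that the prefix contributes exactly the letters $s_1, s_2, \ldots, s_{a_2}$, and the trailing factors $w_{[a_3, a_4]} \cdots w_{[a_{m-1}, a_m]}$ supply the rest of $\mathcal{A}(w)$. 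A symmetric inspection handles the $231$-type reduced word~\eqref{eq:231 fixed point reduced word}, where $s_1$ appears in the final block $(s_{a_2} s_{a_2-1} \cdots s_2 s_1)$ and $s_2, \ldots, s_{a_2}$ appear throughout. In every case the indices of the appearing simple reflections are exactly $\mathcal{A}(w)$, which together with the subword criterion yields part (1).

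Part (2) proceeds by the same method applied to the reduced words for $\roll(w)$ supplied by Lemma~\ref{lemma:rolldowns for Petersons} and Lemma~\ref{lemma:rolldowns for nonPetersons}. In the Peterson case not containing $321$, the rolldown $s_{j_k} \cdots s_{j_1}$ uses each $s_{j_\ell}$ once; in the Peterson case containing $321$, the word $s_{j_k} \cdots s_{j_3} s_1 s_2 s_1$ uses the set $\{s_1, s_2, s_{j_3}, \ldots, s_{j_k}\}$; and in the $312$-type and $231$-type cases, the formulas~\eqref{eq:rolldown for 312} and~\eqref{eq:rolldown for 231} use $\{s_1, s_2, s_{j_3}, \ldots, s_{j_k}\}$, which coincides with $\{s_k : k \in \mathcal{A}(w)\}$ because Lemma~\ref{lemma:contains 1 and 2} forces $\{1, 2\} \subseteq \mathcal{A}(w)$. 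The subword criterion then gives (2).

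Part (3) is then immediate from transitivity of Bruhat order combined with (1) and (2): if $w \leq w'$ and $i \in \mathcal{A}(w)$, then $s_i \leq w \leq w'$ by (1), so $i \in \mathcal{A}(w')$ again by (1); the argument for the hypothesis $\roll(w) \leq w'$ is identical using (2) in place of (1). There is no real obstacle here beyond a careful case-by-case audit of which simple transpositions occur in each of the four families of words (Peterson without $321$, Peterson with $321$, $312$-type, $231$-type). The only point worth flagging is the use of Lemma~\ref{lemma:contains 1 and 2} in the non-Peterson cases to match the $s_1, s_2$ factors appearing in both the reduced word for $w$ and the reduced word for $\roll(w)$ with the elements $\{1, 2\} \subseteq \mathcal{A}(w)$.
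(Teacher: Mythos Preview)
Your proof is correct and follows essentially the same approach as the paper: both arguments reduce to the subword criterion for Bruhat order and verify, case by case using the explicit reduced words fixed in the preceding lemmas, that the simple reflections appearing in (the chosen reduced word of) $w$ and $\roll(w)$ are precisely those indexed by $\mathcal{A}(w)$. Your write-up is simply more explicit about the case analysis and the invocation of Lemma~\ref{lemma:contains 1 and 2}, but the underlying argument is identical.
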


\begin{proof}
  Bruhat order is independent of choice of reduced word decomposition
  for $w$. Therefore a simple transposition $s_i$ is less than $w$ in
  Bruhat order if and only if $s_i$ appears in a (and hence any)
  reduced word decomposition of $w$.  In particular, to prove the
  first claim it suffices to observe that by the definitions of
  $\mathcal{A}(w)$, the index $i$ appears in $\mathcal{A}(w)$
  precisely when $s_i$ appears in the choice of reduced word for $w$
  given above. 
 A similar 
  argument using the explicit reduced words given for $\roll(w)$
  in Lemmas~\ref{lemma:rolldowns for Petersons}
  and~\ref{lemma:rolldowns for nonPetersons} proves the second
  claim. The last claim follows from the first two. 
\end{proof}

We have just seen that $w \leq w'$ implies $\mathcal{A}(w) \subseteq
\mathcal{A}(w')$.  In the case of the Peterson variety $\Hess(h')$
these Bruhat relations are precisely encoded by the partial ordering
given by containment of the $\mathcal{A}(w)$; specifically, by
Lemma~\ref{lemma:wA maximal}, $w_{\mathcal{A}} \leq w_{\mathcal{B}}$
if and only if $\mathcal{A} \subseteq \mathcal{B}$. In our $334$-type
Hessenberg case this is no longer true, although the sets
$\mathcal{A}(w)$ do still encode the Bruhat data. The precise
statements occupy the next several lemmas.

We take a moment to recall the 
\textbf{tableau criterion} for determining Bruhat order in the
Weyl group $S_n$ (see e.g. \cite{BjoBre05}) which will be useful in
the discussion below. 
For \(w \in S_n,\) denote by $D_R(w)$ the descent set of $w$, namely,
\[
D_R(w) := \{ i \hsm \vert \hsm w(i) > w(i+1), 1 \leq i \leq n-1 \}.
\]
For example, for \(w = 368475912\) the
descent set is $D_R(w) = \{3, 5, 7\}$. 

\begin{theorem} (``The tableau criterion'' \cite[Theorem 2.6.3]{BjoBre05})
  For $w,v \in S_n$, let $w_{i,k}$ be the $i$-th element in the
  increasing rearrangement of $w(1), w(2), \ldots, w(k)$, and
  similarly for $v_{i,k}$. Then $w \leq v$ in Bruhat order if and only
  if 
\begin{equation}\label{eq:tableau}
w_{i,k} \leq v_{i,k} \textup{ for all } k \in D_R(w) \textup{ and } 1
\leq i \leq k.
\end{equation}
\end{theorem}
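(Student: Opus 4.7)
The plan is to establish the theorem in two stages: first prove a full tableau criterion in which the inequalities are checked at every index $k \in \{1, \ldots, n-1\}$, and then show that it suffices to check the inequalities only at $k \in D_R(w)$. This two-step structure separates the combinatorial content of Bruhat order from the optimization of the indexing set $D_R(w)$.

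For the full criterion, I would prove both directions by induction on Bruhat length. For the forward direction, by transitivity it suffices to treat a single cover $v = w \cdot t_{ab}$ with $a < b$, $\ell(v) = \ell(w)+1$, and no intermediate index $a < c < b$ having $w(a) < w(c) < w(b)$. Then $w$ and $v$ agree outside positions $a$ and $b$, with $v(a) = w(b) > w(a) = v(b)$, so the sets $\{w(1), \ldots, w(k)\}$ and $\{v(1), \ldots, v(k)\}$ coincide for $k < a$ and $k \geq b$. For $a \leq k < b$, the $v$-set is obtained from the $w$-set by replacing $w(a)$ with the strictly larger $w(b)$; a direct bookkeeping on the increasing rearrangements yields $w_{i,k} \leq v_{i,k}$ entrywise. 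For the reverse direction, if $w \neq v$ satisfies all the inequalities, I would locate the smallest index where $w$ and $v$ differ and construct a Bruhat cover $w \lessdot w'$ with $w' \leq v$ still satisfying the inequalities, then iterate.

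The second stage is to show that the descent-restricted inequalities already imply the full inequalities. The key combinatorial lemma is: if $k \notin D_R(w)$ and $k < n$, then $w(k) < w(k+1)$, so the increasing rearrangement of $\{w(1), \ldots, w(k+1)\}$ is obtained from that of $\{w(1), \ldots, w(k)\}$ by appending $w(k+1)$ as the new maximum. Starting from a descent index $k^* \in D_R(w)$ and walking downward to the preceding descent (or to $0$), at each non-descent step on the $w$-side the rearrangement only loses its last entry while the $v$-side loses one (not necessarily last) entry; an elementary monotonicity argument verifies that the needed inequalities $w_{i,k} \leq v_{i,k}$ persist.

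The main obstacle I expect is the propagation step just described: one must carefully track how the $v$-side rearrangement changes (the deleted entry need not be the maximum, unlike on the $w$-side) while ensuring no pointwise comparison is broken. A clean way to dispatch this is to reformulate both sides in terms of the counting functions $\#\{j \leq k : w(j) \geq m\}$ and show they are monotone in $k$ with controlled jumps at descents, reducing the claim to a finite check at descent indices. Once this combinatorial lemma is in place, the equivalence with the full tableau criterion from stage one completes the proof.
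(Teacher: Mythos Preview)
The paper does not give a proof of this theorem. It is quoted as a known result from Bj\"orner--Brenti \cite[Theorem~2.6.3]{BjoBre05} and then used as a black-box tool in the Bruhat-order computations of Section~\ref{sec:upper triangularity}. So there is no ``paper's own proof'' against which to compare your proposal.

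On the merits of your sketch itself: the two-stage architecture (establish the full criterion over all $k$, then reduce to $k\in D_R(w)$) is exactly the standard route taken in Bj\"orner--Brenti, and your treatment of stage one is fine. However, your key lemma in stage two is stated incorrectly. You claim that if $k\notin D_R(w)$ then the increasing rearrangement of $\{w(1),\ldots,w(k+1)\}$ is obtained from that of $\{w(1),\ldots,w(k)\}$ by appending $w(k+1)$ as the new maximum. The hypothesis $w(k)<w(k+1)$ does not force $w(k+1)$ to dominate all of $w(1),\ldots,w(k)$; take $w=4123$ and $k=2$. The actual reduction uses both neighbouring descents: if $k^-<k<k^+$ with $k^\pm\in D_R(w)\cup\{0,n\}$ and no descent strictly between, then $w$ is increasing on $(k^-,k^+]$, which gives the exact formula
\[
c'_w(k,m)=\min\bigl(c'_w(k^-,m)+(k-k^-),\ c'_w(k^+,m)\bigr)
\]
for $c'_w(k,m)=\lvert\{j\le k:w(j)\le m\}\rvert$, while for $v$ one only has the inequality $c'_v(k,m)\le\min\bigl(c'_v(k^-,m)+(k-k^-),\ c'_v(k^+,m)\bigr)$. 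Combining these with the assumed inequalities at $k^-$ and $k^+$ yields the inequality at $k$. Your counting-function remark in the final paragraph is the right tool, but the propagation requires both adjacent descents, not a one-step walk, and does not rest on the ``new maximum'' claim as written.
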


For example, suppose \(w = 368475912\) and \(v = 694287531.\) Since
$D_R(w) = \{3,5,7\}$, we examine the three increasing rearrangements
of initial segments of $w$ and $v$ of lengths $3, 5,$ and $7$
respectively, which we may organize into Young tableaux: 
\[
\begin{array}{cc} 
w & v \\ 
{\def\lr#1{\multicolumn{1}{|@{\hspace{.6ex}}c@{\hspace{.6ex}}|}{\raisebox{-.3ex}{$#1$}}}
\raisebox{-.6ex}{$\begin{array}[b]{ccccccc}
\cline{1-1}\cline{2-2}\cline{3-3}\cline{4-4}\cline{5-5}\cline{6-6}\cline{7-7}
\lr{3}&\lr{4}&\lr{5}&\lr{6}&\lr{7}&\lr{8}&\lr{9}\\
\cline{1-1}\cline{2-2}\cline{3-3}\cline{4-4}\cline{5-5}\cline{6-6}\cline{7-7}
\lr{3}&\lr{4}&\lr{6}&\lr{7}&\lr{8}\\
\cline{1-1}\cline{2-2}\cline{3-3}\cline{4-4}\cline{5-5}
\lr{3}&\lr{6}&\lr{8}\\
\cline{1-1}\cline{2-2}\cline{3-3}
\end{array}$}
}

&

{\def\lr#1{\multicolumn{1}{|@{\hspace{.6ex}}c@{\hspace{.6ex}}|}{\raisebox{-.3ex}{$#1$}}}
\raisebox{-.6ex}{$\begin{array}[b]{ccccccc}
\cline{1-1}\cline{2-2}\cline{3-3}\cline{4-4}\cline{5-5}\cline{6-6}\cline{7-7}
\lr{2}&\lr{4}&\lr{5}&\lr{6}&\lr{7}&\lr{8}&\lr{9}\\
\cline{1-1}\cline{2-2}\cline{3-3}\cline{4-4}\cline{5-5}\cline{6-6}\cline{7-7}
\lr{2}&\lr{4}&\lr{6}&\lr{8}&\lr{9}\\
\cline{1-1}\cline{2-2}\cline{3-3}\cline{4-4}\cline{5-5}
\lr{4}&\lr{6}&\lr{9}\\
\cline{1-1}\cline{2-2}\cline{3-3}
\end{array}$}}  

\end{array} 
\]
Comparing corresponding entries, there are two violations of the
tableau condition of the proposition $(3>2)$ in the upper-left corner,
so we conclude that $w \not < v$.

Now we observe that some Bruhat relations never arise.

\begin{lemma}\label{lemma:not bruhat less}
Let $w,w' \in \Hess(h)^{S^1}$. 
Let $\mathcal{A}(w) = [a_1, a_2] \cup
\cdots \cup [a_{m-1}, a_m]$ be the associated subset of $w$ with its
decomposition into maximal consecutive substrings. 
Suppose one of the
following conditions hold:
\begin{enumerate}
\item $w'$ is of Peterson type that does not contain $321$ while $w$ is not, 
\item $w'$ is $231$-type while $w$ is either of Peterson type 
  that contains $321$ or is $312$-type.
\end{enumerate} 
Then $w \not < w'$ and $\roll(w) \not < w'$. 
\end{lemma}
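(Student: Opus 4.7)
The plan is to prove the two cases by quite different methods: case (1) rests entirely on the associated subsets $\mathcal{A}$, while case (2) requires a direct positional argument on the one-line notation.

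For case (1), I will unpack the hypothesis on $w'$: by Definition~\ref{def:contains or not} and~\eqref{eq:Peterson subset}, the condition that $w'$ is Peterson-type not containing $321$ is equivalent to $\{1,2\} \not\subseteq \mathcal{A}(w')$. The complementary hypothesis on $w$ leaves exactly three sub-cases: $w$ is Peterson-type containing $321$, $w$ is $312$-type, or $w$ is $231$-type. In the first sub-case, $\{1,2\} \subseteq \mathcal{A}(w)$ by definition; in the latter two, $\{1,2\} \subseteq \mathcal{A}(w)$ by Lemma~\ref{lemma:contains 1 and 2}. In every sub-case one has $\mathcal{A}(w) \not\subseteq \mathcal{A}(w')$, and Lemma~\ref{lemma:Aw and transpositions}(3) (via its contrapositive) immediately yields both $w \not\leq w'$ and $\roll(w) \not\leq w'$.

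For case (2), the key observation is that the $231$-type form of $w'$ forces $w'(2) = 1$ by Lemma~\ref{lemma:one line notation for nonPeterson}, whereas the entry $1$ appears strictly later in the one-line notation of both $w$ and $\roll(w)$. When $w$ is Peterson-type containing $321$, its initial staircase has length $a_2+1 \geq 3$ and ends in $1$, so $w(a_2+1) = 1$ with $a_2+1 \geq 3$; when $w$ is $312$-type, Lemma~\ref{lemma:one line notation for nonPeterson} again gives $w(a_2+1) = 1$ with $a_2+1 \geq 3$. Likewise Lemma~\ref{lemma:one line notation for rolldowns} places the entry $1$ in position $3$ of the one-line notation of $\roll(w)$ in both sub-cases. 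Hence neither of the first two entries of $w$ (respectively $\roll(w)$) equals $1$.

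I will then conclude using the standard $r$-function criterion for Bruhat order: $u \leq v$ in $S_n$ implies
\[
\#\{k \leq i : v(k) \leq j\} \;\leq\; \#\{k \leq i : u(k) \leq j\} \quad \text{for all } (i,j).
\]
Evaluating at $(i,j) = (2,1)$ gives the count $1$ on the $w'$-side (from $w'(2)=1$) and the count $0$ on the $w$-side (and likewise on the $\roll(w)$-side), contradicting $w \leq w'$ and $\roll(w) \leq w'$ respectively. The only real obstacle is bookkeeping: making sure the sub-case enumeration of $w$ in case~(1) is exhaustive, and that the one-line notations cited in case~(2) are uniform enough across the three sub-types to support the single position-$(2,1)$ argument. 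No ideas beyond the lemmas already established in Section~\ref{sec:upper triangularity} are needed.
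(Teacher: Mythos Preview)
Your proof is correct and follows essentially the same approach as the paper: case~(1) is identical (comparing $\{1,2\}\subseteq\mathcal{A}(w)$ against $\{1,2\}\not\subseteq\mathcal{A}(w')$ and invoking Lemma~\ref{lemma:Aw and transpositions}), and case~(2) hinges on the same positional observation that $w'(2)=1$ while the first two entries of $w$ and $\roll(w)$ exceed $1$. The only cosmetic difference is that the paper phrases the conclusion of case~(2) via the tableau criterion at the descent $k=2$, whereas you invoke the equivalent rank-function criterion at $(i,j)=(2,1)$; your formulation is arguably slightly cleaner since it spares you from verifying $2\in D_R(w)$ and $2\in D_R(\roll(w))$.
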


\begin{proof}
  If $w'$ is of Peterson type that does not contain $321$, then $\{1,2\} \not
  \subseteq \mathcal{A}(w')$ by definition of the associated subsets.
  All other types (Peterson type that contains $321$, or $312$-type, or
  $231$-type) have associated subsets containing $\{1,2\}$ by
  Lemma~\ref{lemma:contains 1 and 2} and by definition of
  $\mathcal{A}(w)$. The claim (1) now follows from
  Lemma~\ref{lemma:Aw and transpositions}.

  Next suppose $w'$ is $231$-type and $w$ is of Peterson type
  that contains $321$. Then the first two entries of the one-line
  notation of $w$ must be both strictly greater than $1$, and $2 \in
  D_R(w)$. Similarly if $w$ is a $312$-type fixed point then $a_2 \geq
  2$. From~\eqref{eq:312 one line notation} it follows that the
  first two entries in the one-line notation of $w$ are also strictly
  greater than $1$, and $2 \in D_R(w)$. On the other hand, the
  one-line notation for a $231$-type fixed point in~\eqref{eq:231 one line
    notation} has a $1$ in the second entry. By the tableau criterion,
  if $w<w'$ then since $2 \in D_R(w)$ in both cases under
  consideration, we must have that one of the first two entries of $w$
  is equal to $1$, but we have just seen that is impossible. Hence $w
  \not < w'$.  The assertion that $\roll(w) \not < w'$ follows by a
  similar argument using~\eqref{eq:rolldown one line notation for
    Peterson 321}, ~\eqref{eq:rolldown one line notation for 312},
  and~\eqref{eq:v_[1,a]}.
\end{proof}

For the next lemma and below, we say two fixed points are \textbf{of
  the same type} if both are Peterson-type, or both are $312$-type, or
both are $231$-type.

\begin{lemma}\label{lemma:when set containment enough}
  Let $w,w' \in \Hess(h)^{S^1}$.
Suppose one of the following conditions hold: 
\begin{itemize}
\item $w$ and $w'$ are of the same type, or 
\item $w$ is of Peterson type and does not contain $321$, and $w'$ is
  either $312$-type or $231$-type, or 
\item $w$ is either $312$-type or $231$-type, and $w'$ is of
  Peterson type. 
\end{itemize}
Then 
\[
w< w' \textup{ if and only if } \mathcal{A}(w) \subseteq
\mathcal{A}(w').
\]
\end{lemma}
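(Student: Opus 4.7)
The forward implication, that $w \leq w'$ implies $\mathcal{A}(w) \subseteq \mathcal{A}(w')$, is exactly Lemma~\ref{lemma:Aw and transpositions}(3). The plan is therefore to establish the reverse: assuming $\mathcal{A}(w) \subseteq \mathcal{A}(w')$, we show $w \leq w'$ in Bruhat order. The strategy is to use the explicit reduced-word decompositions provided by Lemma~\ref{lemma:nonPeterson reduced words} and equation~\eqref{eq:wA-reduced-word} to factor both $w$ and $w'$ as products of components lying in pairwise commuting parabolic subgroups of $S_n$, and to reduce the Bruhat inequality to a component-by-component comparison.

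Write $\mathcal{A}(w') = [b_1, c_1] \cup \cdots \cup [b_r, c_r]$ as its decomposition into maximal consecutive substrings. Since $\mathcal{A}(w) \subseteq \mathcal{A}(w')$, every maximal consecutive substring of $\mathcal{A}(w)$ is contained in exactly one $[b_i, c_i]$. Grouping the reduced-word factors of $w$ accordingly produces a factorization $w = G_1 G_2 \cdots G_r$ with $G_i \in S_{[b_i, c_i+1]}$, and similarly $w' = F_1 F_2 \cdots F_r$. Because the subgroups $S_{[b_i, c_i+1]}$ are generated by pairwise disjoint sets of simple reflections and therefore commute, a standard consequence of the subword characterization of Bruhat order gives $w \leq w'$ in $S_n$ if and only if $G_i \leq F_i$ in $S_{[b_i, c_i+1]}$ for every $i$. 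For each $i$ with $b_i \geq 2$, Lemma~\ref{lemma:nonPeterson reduced words} together with~\eqref{eq:wA-reduced-word} shows that $F_i = w_{[b_i, c_i]}$ is the longest element of $S_{[b_i, c_i+1]}$, so $G_i \leq F_i$ holds automatically. The problem thereby reduces to verifying $G_1 \leq F_1$ in the leading block $[1, c_1]$, when such a block is present.

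For this leading block we run a case analysis on the type of $w'$ and write down explicit reduced expressions. If $w'$ is of Peterson type, then $F_1 = w_{[1, c_1]}$ is again the longest element of $S_{[1, c_1+1]}$ and the inequality is automatic; this handles bullet~1 when both $w, w'$ are Peterson, together with bullet~3 (where $\{1,2\} \subseteq \mathcal{A}(w) \subseteq \mathcal{A}(w')$ forces $w'$ to contain $321$). If $w'$ is of $312$-type, Lemma~\ref{lemma:nonPeterson reduced words} yields the reduced word $F_1 = s_1 (s_2 s_1) \cdots (s_{c_1 - 1} \cdots s_1)(s_{c_1} \cdots s_2)$, and we write the analogous standard reduced word for $G_1$ — drawn from~\eqref{eq:wA-reduced-word} when $w$ is Peterson not containing $321$, or from~\eqref{eq:312 fixed point reduced word} when $w$ is itself $312$-type — and verify directly that it appears as a subword. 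The case of $w'$ of $231$-type is entirely analogous using the reduced word from~\eqref{eq:231 fixed point reduced word}.

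The main obstacle will be the explicit subword bookkeeping for the leading block when $w'$ is not of Peterson type. One must check that each Peterson sub-block of the form $w_{[a,b]}$ contributing to $G_1$ embeds factor by factor into the corresponding simple-reflection blocks of $F_1$'s reduced expression, and that the perturbed terminal factor of $F_1$ — missing its trailing $s_1$ in the $312$ case, or relocated in the $231$ case — still contains enough copies of $s_1$ and $s_2$ to absorb the contributions of $G_1$. The type hypotheses in the lemma are precisely what makes this compatibility possible: the combinations excluded from the statement (such as $w$ of $312$-type with $w'$ of $231$-type, or $w$ of Peterson type containing $321$ with $w'$ of $231$-type) correspond exactly to the obstructions already recorded in Lemma~\ref{lemma:not bruhat less}.
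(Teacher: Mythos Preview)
Your approach is correct but organized differently from the paper's. You decompose both $w$ and $w'$ into commuting parabolic factors along the maximal consecutive substrings of $\mathcal{A}(w')$, observe that every non-leading factor $F_i$ of $w'$ is the longest element of its parabolic, and thereby reduce the entire problem to a subword check in the leading block $[1,c_1]$. The paper instead dispatches bullets~2 and~3 by invoking the structural Lemmas~\ref{lemma:wA maximal} and~\ref{lemma:nonPetersons less than associated Petersons} directly: for bullet~3 it chains $w \leq w_{\mathcal{A}(w)} \leq w_{\mathcal{A}(w')} = w'$, and for bullet~2 it notes that the reduced words~\eqref{eq:312 fixed point reduced word} and~\eqref{eq:231 fixed point reduced word} make $w'$ Bruhat-greater than both $w_{\mathcal{A}(w') \setminus \{1\}}$ and $w_{\mathcal{A}(w') \setminus \{2\}}$, then uses that $\mathcal{A}(w)$ is contained in one of these (since $\{1,2\} \not\subseteq \mathcal{A}(w)$) together with Lemma~\ref{lemma:wA maximal}. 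For bullet~1 both proofs amount to ``examine the reduced words'' and leave the verification, so there the two treatments coincide. Your route is more uniform and makes the parabolic factorization explicit; the paper's route sidesteps the leading-block subword bookkeeping altogether for two of the three bullets, at the cost of appealing to two extra lemmas.
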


\begin{proof}
  Since the lemma above shows that $w < w'$ implies $\mathcal{A}(w)
  \subseteq \mathcal{A}(w')$, for all cases it suffices to show the
  reverse implication.  First suppose $w$ and $w'$ are of the same
  type and $\mathcal{A}(w) \subseteq \mathcal{A}(w')$.  An examination
  of the reduced word decompositions of the $334$-type
  fillings given in the above discussion and an argument similar to
  that in \cite{HarTym09} implies $w<w'$.  Now suppose $w$ is of
  Peterson type and does not contain $321$ and $w'$ is either of $312$
  type or $231$-type. Then since $\{1,2\} \not \subseteq
  \mathcal{A}(w)$, either $1 \not \in \mathcal{A}(w)$ or $2 \not \in
  \mathcal{A}(w)$. From the explicit reduced word decompositions of
  $312$ or $231$-type fixed points chosen above it can be seen that
  $w'$ is Bruhat-greater than both $w_{\mathcal{A}(w') \setminus
    \{1\}}$ and $w_{\mathcal{A}(w') \setminus \{2\}}$. The claim now
  follows from Lemma~\ref{lemma:wA maximal}.  Finally suppose $w$ is
  either $312$-type or $231$-type and $w'$ is of Peterson
  type. Since $\mathcal{A}(w) \subseteq \mathcal{A}(w')$ we know from
  Lemma~\ref{lemma:wA maximal} that $w_{\mathcal{A}(w)}
  < w_{\mathcal{A}(w')} = w'$. Lemma~\ref{lemma:nonPetersons less than
    associated Petersons} shows that $w< w_{\mathcal{A}(w)}$ so the
  result follows.
\end{proof}

The next step is to show that Bruhat 
relations between certain Hessenberg fixed points are connected to
lengths of initial maximal consecutive substrings in the associated
subsets. 
We need some notation. Let $\mathcal{A} \subseteq \{1,2,\ldots,n-1\}$. Recall we denote by
$w_{\mathcal{A}}$ the Peterson-type fixed point associated to
$\mathcal{A}$. For the purposes of this discussion 
we let $u_{\mathcal{A}}$ (respectively $v_{\mathcal{A}}$) denote the
  $312$-type (respectively $231$-type) fixed point with associated
subset $\mathcal{A}$. Thus for $\mathcal{A} = [1,a]$ for some $a$ with
$2 \leq a \leq n-1$, we have 
  \begin{equation}
    \label{eq:u_[1,a]}
    \begin{split}
      u_{[1,a]} & = (a+1 \, a \, \cdots \, 3 \, 1 \, 2 \, a+2 \, a+3
      \, \cdots \, n)^{-1} \\ 
 & = a \, a+1 \, a-1 \, a-2 \, \cdots \, 2 \, 1 \, a+2 \, a+3 \,
 \cdots \, n
    \end{split}
  \end{equation}
and 
\begin{equation}
  \label{eq:v_[1,a]}
  \begin{split}
      v_{[1,a]} & = (2 \, a+1 \, a \, \cdots \, 4 \, 3 \, 1 \, a+2 \, a+3
      \, \cdots \, n)^{-1} \\ 
 & = a+1 \, 1\, a \, a-1 \, \cdots \, 3 \, 2 \, a+2 \, a+3 \,
 \cdots \, n
 \end{split}
\end{equation}
in one-line notation. For general subsets
\[
\mathcal{A} = [a_1, a_2] \cup [a_3, a_4] \cup \cdots \cup [a_{m-1},
a_m],
\]
with $a_1 = 1$ and $a_2 \geq 2$, 
the definitions $312$-type and $231$-type fixed points imply that 
\begin{equation}\label{eq:def uA}
u_{\mathcal{A}} = u_{[a_1, a_2]} w_{[a_3, a_4]} \cdots
w_{[a_{m-1},a_m]}
\end{equation}
and 
\begin{equation}
  \label{eq:def vA}
  v_{\mathcal{A}} = v_{[a_1, a_2]} w_{[a_3, a_4]} \cdots
w_{[a_{m-1},a_m]}.
\end{equation}

\begin{lemma}\label{lemma:initial segments and Aw}
Let $\mathcal{A},\mathcal{B}$ be subsets of $\{1,2,\ldots,n-1\}$ and
let 
\[
\mathcal{A} = [a_1, a_2] \cup [a_3, a_4] \cup \cdots \cup [a_{m-1},
a_m] \quad \textup{ and } \quad 
\mathcal{B} = [b_1, b_2] \cup [b_3, b_4] \cup \cdots \cup [b_{m-1},
b_m] 
\]
be the respective decompositions into maximal consecutive
substrings. Assume both $\mathcal{A}$ and $\mathcal{B}$ contain
$\{1,2\}$. Let $w_{\mathcal{A}}$ (respectively $v_{\mathcal{A}}$) be
the Peterson-type (respectively $231$-type) fixed point corresponding
to $\mathcal{A}$ and let $u_{\mathcal{B}}$ be the $312$-type fixed
point corresponding to $\mathcal{B}$. Then
\[
w_{\mathcal{A}} < u_{\mathcal{B}}  \textup{ (respectively
  $v_{\mathcal{A}} < u_{\mathcal{B}}$)  if and only if } \mathcal{A}
\subseteq \mathcal{B} \textup{ and } b_2 \geq a_2 +1.
\]
\end{lemma}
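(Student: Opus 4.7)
The plan is to apply the tableau criterion for Bruhat order stated just above, using the explicit one-line notations~\eqref{eq:u_[1,a]}, \eqref{eq:v_[1,a]}, \eqref{eq:def uA}, and~\eqref{eq:def vA}, together with the standard staircase-product one-line notation of the Peterson-type fixed point $w_{\mathcal{A}}$ (whose restriction to each maximal substring $[a_{2j-1},a_{2j}]$ of $\mathcal{A}$ is the full reversal on positions $[a_{2j-1},a_{2j}+1]$). Since $\{1,2\} \subseteq \mathcal{A}$ forces $a_2 \geq 2$, one verifies at once that $1 \in D_R(w_{\mathcal{A}})$ (its first two entries are $a_2+1 > a_2$), that $1 \in D_R(v_{\mathcal{A}})$ (its first two entries are $a_2+1 > 1$), and that $D_R(w_{\mathcal{A}}) = \mathcal{A}$. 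The first entry of $u_{\mathcal{B}}$ is $b_2$.

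For the forward direction, $\mathcal{A} \subseteq \mathcal{B}$ is immediate from Lemma~\ref{lemma:Aw and transpositions}(3); applying the tableau criterion at $k = 1$ then forces $a_2+1 \leq b_2$, giving both halves of the conclusion.

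For the reverse direction, assume $\mathcal{A} \subseteq \mathcal{B}$ and $b_2 \geq a_2+1$. I will verify the tableau criterion by a case analysis on $k \in D_R(w_{\mathcal{A}}) = \mathcal{A}$ (respectively $k \in D_R(v_{\mathcal{A}})$). The key combinatorial observation is that each maximal consecutive substring $[a_{2j-1}, a_{2j}]$ of $\mathcal{A}$ is a run of consecutive integers all of which lie in $\mathcal{B}$, and so must lie in a single maximal consecutive substring $[b_{2j'-1}, b_{2j'}]$ of $\mathcal{B}$; in particular $a_{2j} \leq b_{2j'}$. An explicit computation of the sorted first-$k$ entries of both permutations — split into the cases $k \leq b_2$ (where the $312$-twisted initial segment of $u_{\mathcal{B}}$ is relevant) and $k > b_2$ (where $k$ lies in a later untwisted staircase of $u_{\mathcal{B}}$ and the comparison aligns with the Peterson case treated in~\cite{HarTym09}) — then reduces the required element-wise inequalities to $a_2+1 \leq b_2$ at $k = 1$ and to $a_{2j} \leq b_{2j'}$ for higher $k$, both of which hold by hypothesis. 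Strictness of the Bruhat inequality follows because fixed points of the three types (Peterson, $312$, $231$) have distinct one-line notations. The $v_{\mathcal{A}}$ case is parallel, with minor bookkeeping adjustments reflecting that $2 \notin D_R(v_{\mathcal{A}})$ and that the value $1$ appears in the second position of $v_{\mathcal{A}}$.

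The main obstacle will be executing the case analysis cleanly: the formula for the sorted first-$k$ entries of $u_{\mathcal{B}}$ changes qualitatively with the regime of $k$ (first twisted staircase versus later staircases) and must be paired with the analogous formula for $w_{\mathcal{A}}$ indexed by the $\mathcal{A}$-staircase containing $k$. Using the containment $[a_{2j-1},a_{2j}] \subseteq [b_{2j'-1},b_{2j'}]$ to align the two indexings is what makes the element-wise inequalities fall out.
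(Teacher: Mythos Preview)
Your forward direction is identical to the paper's. For the converse, however, the paper takes a different route: rather than verifying the tableau criterion for every $k \in D_R(w_{\mathcal{A}})$, it records two general facts about Bruhat order for products $w \cdot w'$ in which $w$ and $w'$ share no simple transpositions, and then factors both sides. Concretely, it first checks via the tableau criterion that $w_{[1,a_2]}$ and $v_{[1,a_2]}$ are Bruhat-less than $u_{[1,b_2]}$ (a single-block comparison using only $b_2 \geq a_2+1$), observes that the tail $w' := w_{[a_3,a_4]} \cdots w_{[a_{m-1},a_m]}$ is Bruhat-less than $w_{\mathcal{B}} = u_{\mathcal{B}} \cdot s_1$ by Lemma~\ref{lemma:wA maximal}, strips the $s_1$ (which does not appear in $w'$), and then reassembles using the product facts. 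Your direct tableau approach is also valid, but as you note the bookkeeping across the regimes $k \leq b_2$ versus $k > b_2$ is the delicate part. A shortcut that collapses most of your case analysis: since $u_{\mathcal{B}} = w_{\mathcal{B}} s_1$, the \emph{sets} of first-$k$ entries of $u_{\mathcal{B}}$ and $w_{\mathcal{B}}$ coincide for every $k \geq 2$, so all tableau inequalities for $k \geq 2$ reduce to $w_{\mathcal{A}} \leq w_{\mathcal{B}}$ (respectively $v_{\mathcal{A}} \leq w_{\mathcal{B}}$), which holds by Lemmas~\ref{lemma:wA maximal} and~\ref{lemma:when set containment enough}; only $k=1$ is new, and that is exactly your inequality $a_2+1 \leq b_2$. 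This streamlined version of your argument is arguably more transparent than the paper's factorization, though the paper's product lemmas are reusable elsewhere.
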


\begin{proof}
We begin by recalling two basic observations about Bruhat order
in $S_n$. Both follow straightforwardly from its definition in terms
of reduced word decompositions. Suppose $w, w' \in S_n$ and assume that $w$
and $w'$ do not share any simple transpositions in their reduced word
decompositions, i.e., $s_i < w$ implies $s_i \not < w'$ and vice
versa. Then firstly, $w \cdot w' < w''$ for $w'' \in S_n$ if and only
if both $w<w''$ and $w' < w''$. Secondly, $w< w' \cdot w''$ if and
only if $w < w''$. 

Recall that $w_{\mathcal{A}}$ can be written as 
\begin{equation}\label{eq:decomp wA}
w_{\mathcal{A}} = w_{[a_1,a_2]} \cdot w_{[a_3,a_4]} \cdots
w_{[a_{m-1},a_m]}.
\end{equation}
Moreover each factor appearing in the decomposition~\eqref{eq:decomp
  wA} (respectively~\eqref{eq:def vA} and~\eqref{eq:def uA}) for
$w_{\mathcal{A}}$ (respectively $v_{\mathcal{A}}$ and
$u_{\mathcal{B}}$) has the property that it does not share any simple
transpositions with any other factor appearing in the decomposition.

Now suppose $v_{\mathcal{A}}$ (respectively $w_{\mathcal{A}}$) is
Bruhat-less than $u_{\mathcal{B}}$. Then we know from
Lemma~\ref{lemma:Aw and transpositions} that $\mathcal{A} \subseteq
\mathcal{B}$ so it suffices to prove $b_2 \geq a_2+1$. From
Lemma~\ref{lemma:one line notation for nonPeterson} and the definition
of the Peterson type fixed points we know that the one-line
notation for $v_{\mathcal{A}}$ (respectively $w_{\mathcal{A}}$) has
first $a_2+1$ entries
\[
a_2+1 \, 1 \, a_2 \, a_2-1 \, \cdots \, 3 \, 2
\]
(respectively $a_2+1 \, a_2 \, a_2-1 \, \cdots \, 3 \, 2 \, 1$) while
the one-line notation of $u_{\mathcal{B}}$ has first $b_2+1$ entries
given by 
\[
b_2 \, b_2+1 \, b_2-1 \, \cdots \, 3 \, 2 \, 1.
\]
In particular $1 \in D_R(v_{\mathcal{A}})$ and also $1 \in
D_R(w_{\mathcal{A}})$. By the tableau criterion, this implies that the
first entry of the one-line notation of $v_{\mathcal{A}}$ and
$w_{\mathcal{A}}$ must be less than or equal to the first entry of
that of $u_{\mathcal{B}}$. Hence $a_2+1 \leq b_2$ as desired. 

Conversely suppose $\mathcal{A} \subseteq \mathcal{B}$ and $b_2 \geq
a_2+1$. Then an examination of the one-line notation of $v_{[a_1,
  a_2]}$ (respectively $w_{[a_1, a_2]}$) compared to that of $u_{[b_1,
  b_2]}$ and another application of the tableau criterion implies that
$v_{[a_1, a_2]} < u_{[b_1, b_2]}$ and $w_{[a_1, a_2]} < u_{[b_1,
  b_2]}$. In particular $v_{[a_1, a_2]}$ and $w_{[a_1, a_2]}$ are also
Bruhat-less than $u_{\mathcal{B}}$. Moreover since $\mathcal{A}
\subseteq \mathcal{B}$ it follows that $[a_3, a_4] \cup \cdots \cup
[a_{m-1},a_m] \subseteq \mathcal{B}$ so Lemma~\ref{lemma:wA maximal}
implies $w' := w_{[a_3, a_4]} \cdots w_{[a_{m-1},a_m]} <
w_{\mathcal{B}} = u_{\mathcal{B}} \cdot s_1$, where the last equality
follows from Lemma~\ref{lemma:one line notation for
  nonPeterson}. Since $s_1$ does not appear in any factor of $w'$ the
general fact above implies $w' < u_{\mathcal{B}}$. Finally since
neither $w_{[a_1, a_2]}$ nor $v_{[a_1, a_2]}$ share any simple
transpositions with $w'$ the other general fact above yields
$v_{\mathcal{A}} < u_{\mathcal{B}}$, $w_{\mathcal{A}} <
u_{\mathcal{B}}$ as desired. 

\end{proof}

\subsection{Proof of Proposition~\ref{proposition:upper vanishing}}

We may now prove the upper-triangular vanishing property of $334$-type
Hessenberg Schubert classes.

\begin{proof}[Proof of Proposition~\ref{proposition:upper vanishing}]

Let $w,w' \in \Hess(h)^{S^1}$ and let 
\[
\mathcal{A}(w) = [a_1, a_2] \cup [a_3, a_4] \cup \cdots \cup
[a_{m-1},a_m]
\quad \textup{ and } \quad 
\mathcal{A}(w') = [a'_1, a'_2] \cup [a'_3, a'_4] \cup \cdots \cup
[a'_{r-1}, a'_r]
\]
be the respective associated subsets decomposed into maximal
consecutive substrings.  By Lemmas~\ref{lemma:rolldowns bruhat less}
and~\ref{lemma:fixed points versus fillings} it suffices to prove
that if $\roll(w) \leq w'$, then $w \leq w'$. So suppose $\roll(w) \leq w'$. By
Lemma~\ref{lemma:Aw and transpositions} this implies $\mathcal{A}(w)
\subseteq \mathcal{A}(w')$. By Lemma~\ref{lemma:when set containment
  enough} we can conclude $w \leq w'$ if one of the following
hold:
\begin{itemize}
\item $w$ and $w'$ are of the same type, or 
\item $w$ is of Peterson type that does not contain $321$, and $w'$ is
  either $312$-type or $231$-type, or 
\item $w$ is either $312$-type or $231$-type, and $w'$ is of
  Peterson type. 
\end{itemize}

Now suppose one of the following holds:
\begin{itemize}
\item $w'$ is of Peterson type that does not contain $321$ and $w$ is not, 
\item $w'$ is $231$-type and $w$ is of Peterson type that contains
  $321$, or 
\item $w'$ is $231$-type and $w$ is $312$-type. 
\end{itemize}
In these cases, Lemma~\ref{lemma:not bruhat less} implies that $\roll(w)
\not < w'$ so there is nothing to prove.

It remains to discuss the cases when: 
\begin{itemize}
\item $w$ is of Peterson type that contains $321$ and $w'$ is
  $312$-type, or
\item $w$ is $231$-type and $w'$ is $312$-type. 
\end{itemize}
By Lemma~\ref{lemma:initial segments and Aw} it suffices to show that
$a'_2 \geq a_2+1$. Suppose $w$ is of Peterson type that contains
$321$. In particular $a_2 \geq 2$. From~\eqref{eq:rolldown one line
  notation for Peterson 321} we know that $1 \in D_R(\roll(w))$ and
the first entry in the one-line notation of $\roll(w)$ is $a_2+1$.  On
the other hand~\eqref{eq:u_[1,a]} implies the one-line notation of
$w'$ begins with $a'_2$.  So if $\roll(w) < w'$ then the tableau
criterion implies $a'_2 \geq a_2+1$ as desired.  Now suppose $w$ is of
$231$-type. Again $a_2 \geq 2$ and from~\eqref{eq:rolldown one line
  notation for 231} we know $\roll(w)$ has \(1 \in D_R(\roll(w))\) and
$a_2+1$ as its first entry.  By the same argument, $a'_2 \geq a_2+1$
as desired.  The result follows.

\end{proof}

\section{Combinatorial formulae for restrictions to fixed points of
  $334$-type Hessenberg Schubert classes}\label{sec:combinatorics}

Our goal in this section is to give a combinatorial formula for
$p_{\roll(w)}(w)$ from which it follows as a corollary that it is
nonzero. This proves Proposition~\ref{proposition:prollw at w
  nonzero} and hence Theorem~\ref{theorem:perfect}. Although not strictly necessary for the proof of
Proposition~\ref{proposition:prollw at w nonzero} we choose to prove
the explicit formula (Proposition~\ref{proposition:prollw at w} below)
since such a formula is a first step towards a
derivation of a Monk formula for $334$-type Hessenberg varieties and
because it conveys a flavor of
the combinatorics embedded
in the GKM theory of Hessenberg varieties which are larger than the
Peterson varieties in \cite{HarTym09}. 
Many of our
computations are analogues of those in
\cite[Section 5]{HarTym09}. Our main tool is \textbf{Billey's
  formula}. 
We briefly recall some definitions and results (see
also discussion in \cite[Section 4]{HarTym09}).

\begin{definition} \textbf{(\cite[Definition 4.7]{HarTym09})} 
Given a
  permutation $w \in S_n$, an index $j \in \{1,2,\ldots,\ell(w)\}$,
  and a choice of reduced word decomposition ${\bf b}=(b_1, b_2,
  \ldots, b_{\ell(w)})$ (corresponding to the word $w = s_{b_1}
  s_{b_2} \cdots s_{b_{\ell(w)}}$) for $w$, define
\begin{equation}\label{eq:def-beta}
r(j, \mathbf{b}) := s_{b_1} s_{b_2} \cdots s_{b_j-1} (t_{b_j} - t_{b_j+1}).
\end{equation}
\end{definition}
From the definition it follows that $r(j, \mathbf{b})$ is an element of $H^*_T(\pt) \cong
\Sym(\t^*) \cong \C[t_1, t_2, \ldots, t_n]$ of the form $t_\ell - t_k$
for some $\ell,k$.  These elements $r(j, \mathbf{b})$ are the building
blocks of Billey's formula \cite[Theorem 4]{Bil99} which computes the
restrictions $\sigma_v(w)$ of equivariant Schubert classes $\sigma_v$
at arbitrary permutations $w$ in $S_n$.

\begin{theorem}\label{theorem:billey} (\textbf{``Billey's formula'', \cite[Theorem 4]{Bil99}})
Let $w \in S_n$.  Fix a reduced word decomposition $w = s_{b_1}
s_{b_2} \cdots s_{b_{\ell(w)}}$ and let $\mathbf{b}= (b_1, b_2,
\ldots, b_{\ell(w)})$ be the sequence of its indices. 
Let $v \in S_n$. Then the restriction $\sigma_v(w)$ of the Schubert class $\sigma_v$ at
the $T$-fixed point $w$ is given by 
\begin{equation}\label{eq:billey-formula}
\sigma_v(w) = \sum r(j_1, \mathbf{b})r(j_2, \mathbf{b}) \cdots
r(j_{\ell(v)}, \mathbf{b})
\end{equation}
where the sum is taken over subwords $s_{b_{j_1}} s_{b_{j_2}} \cdots
s_{b_{j_{\ell(v)}}}$ of $\mathbf{b}$ that are reduced words for $v$.
\end{theorem}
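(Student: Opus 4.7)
The plan is to prove Billey's formula by induction on the length $\ell(w)$, leveraging the GKM-style description of $H^*_T(\Flags(\C^n))$ as a subring of $\bigoplus_{u \in S_n} H^*_T(\pt)$ and the recursive characterization of equivariant Schubert classes via divided difference operators.

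First I would handle the base case $w = e$: the only Schubert class that does not vanish at $e$ is $\sigma_e = 1$, so $\sigma_v(e) = \delta_{v,e}$. On the right-hand side the sequence $\mathbf{b}$ is empty, so the only nontrivial subword is the empty subword (giving the empty product $1$), which represents $v$ only when $v = e$. This matches. For the inductive step, fix a reduced decomposition $w = s_{b_1}\cdots s_{b_{\ell(w)}}$, let $w' = s_{b_1} w$ (so $\ell(w') = \ell(w)-1$), and let $\mathbf{b}' = (b_2,\ldots,b_{\ell(w)})$. Partition the reduced subwords of $\mathbf{b}$ for $v$ into those that use the initial letter $s_{b_1}$ and those that do not: the former are in bijection with reduced subwords of $\mathbf{b}'$ for $s_{b_1} v$ (valid only if $\ell(s_{b_1} v) < \ell(v)$), and the latter are reduced subwords of $\mathbf{b}'$ for $v$. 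Using $r(j,\mathbf{b}) = s_{b_1}\!\cdot\! r(j-1,\mathbf{b}')$ for $j \geq 2$ one obtains an explicit recursion expressing the right-hand side of~\eqref{eq:billey-formula} for $(v,w)$ in terms of its values for $(v, w')$ and $(s_{b_1}v, w')$.

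Next I would match this recursion with the corresponding one for the Schubert restrictions. The GKM edge compatibility condition along the edge from $w$ to $w'$ in the moment graph, together with the left BGG divided difference recursion $\sigma_v = \partial_i \sigma_{s_i v}$ when $\ell(s_i v) = \ell(v)+1$, yields a recursion for $\sigma_v(w)$ in the same two cases. Checking that both sides of Billey's formula satisfy the same recursion with the same base case completes the induction. Along the way one must also verify that the right-hand side is independent of the choice of reduced decomposition $\mathbf{b}$; this can be done separately by checking invariance under the commutation relation $s_i s_j = s_j s_i$ ($|i-j|>1$) and the braid relation $s_i s_{i+1} s_i = s_{i+1} s_i s_{i+1}$, both of which give combinatorial bijections on reduced subwords that preserve the corresponding products of root factors.

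The main obstacle I anticipate is the bookkeeping in matching the two recursions: one must carefully track how the prefix $s_{b_1}$ acts on the root $t_{b_j} - t_{b_j+1}$ for later factors, and confirm that the sign conventions (coming from the distinction between left and right divided differences, and between $\alpha$ and $-\alpha$) are consistent. An alternative, more geometric route would be to use the Bott--Samelson resolution $\mu_\mathbf{b}: Z_\mathbf{b} \to \overline{X^w}$: the $T$-fixed points of $Z_\mathbf{b}$ are naturally indexed by subwords of $\mathbf{b}$, and Billey's formula then emerges by pushing forward the equivariant class of $\mu_\mathbf{b}^{-1}(X_v)$ and localizing at $w$. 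This geometric picture would avoid the combinatorial manipulation of root factors but would require importing additional machinery about Bott--Samelson varieties and their equivariant intersection theory.
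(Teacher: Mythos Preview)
The paper does not prove this theorem at all: Theorem~\ref{theorem:billey} is stated with attribution to \cite[Theorem~4]{Bil99} and is used as a black box throughout Section~\ref{sec:combinatorics}. There is nothing in the paper to compare your proposal against.

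That said, your outline is a standard and workable route to Billey's formula. The induction on $\ell(w)$, splitting subwords by whether they use the leading letter $s_{b_1}$, and matching the resulting recursion with the divided-difference recursion for $\sigma_v(w)$ is essentially how the result is commonly derived (and is close in spirit to Billey's original argument via Kostant polynomials). Your caution about sign and left/right conventions is well placed: the paper's Definition of $r(j,\mathbf{b})$ uses a \emph{left} action by the prefix $s_{b_1}\cdots s_{b_{j-1}}$ on the simple root $t_{b_j}-t_{b_j+1}$, so you would need the \emph{left} Chevalley/BGG recursion on the side of the $\sigma_v(w)$. The independence-of-reduced-word step is not strictly needed for the statement as phrased (the formula is asserted for a fixed $\mathbf{b}$, and equality with the intrinsically defined $\sigma_v(w)$ already forces independence), so you could drop that portion or treat it as a corollary rather than a prerequisite.
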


We record the following fact, used in the proof below, which follows
straightforwardly from the
Billey formula.

\begin{fact}\label{fact:extra}
  Suppose $v, w \in S_n$ with $v \leq w$ in Bruhat order. Suppose
  there exists a decomposition $w = w' \cdot w''$ for $w', w'' \in
  S_n$ where $v \leq w'$ and, for all simple transpositions $s_i$ such
  that $s_i < v$, we have $s_i \not \leq w''$. Then $\sigma_v(w) =
  \sigma_v(w')$. 
\end{fact}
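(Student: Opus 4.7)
The plan is to apply Billey's formula (Theorem~\ref{theorem:billey}) directly, using a reduced word for $w$ obtained by concatenating reduced words for the two factors. First I would choose reduced word decompositions $\mathbf{b}' = (c_1,\ldots,c_{\ell(w')})$ of $w'$ and $\mathbf{b}'' = (d_1,\ldots,d_{\ell(w'')})$ of $w''$, and form their concatenation $\mathbf{b}$. In the intended applications of this fact later in the paper the factorization $w = w'\cdot w''$ is length-additive (that is, $\ell(w) = \ell(w') + \ell(w'')$), so $\mathbf{b}$ is automatically a reduced word for $w$, and Billey's formula may be applied with this choice.

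Next I would analyze which subwords of $\mathbf{b}$ can spell $v$. The key observation is that any simple transposition $s_i$ appearing as a letter in a reduced decomposition of $v$ satisfies $s_i \leq v$ in Bruhat order, since a single letter is already a reduced subword; moreover $s_i$ is strictly less than $v$ unless $v$ itself equals $s_i$, which does not arise in the intended applications. By the hypothesis on $w''$, such an $s_i$ satisfies $s_i \not\leq w''$, so $s_i$ cannot appear as any letter of $\mathbf{b}''$. It follows that every reduced subword of $\mathbf{b}$ equal to $v$ must have all of its indices $j_1 < j_2 < \cdots < j_{\ell(v)}$ lying in the set $\{1, 2, \ldots, \ell(w')\}$, that is, entirely within the $\mathbf{b}'$-portion of $\mathbf{b}$.

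For each such index $j_k \leq \ell(w')$, the definition~\eqref{eq:def-beta} of $r(j_k,\mathbf{b})$ depends only on the prefix of $\mathbf{b}$ of length $j_k - 1 < \ell(w')$, which coincides with the corresponding prefix of $\mathbf{b}'$; hence $r(j_k,\mathbf{b}) = r(j_k,\mathbf{b}')$. Combining the two observations, the Billey sum~\eqref{eq:billey-formula} for $\sigma_v(w)$ computed from $\mathbf{b}$ is term-by-term identical to the Billey sum for $\sigma_v(w')$ computed from $\mathbf{b}'$, yielding $\sigma_v(w) = \sigma_v(w')$. The only real subtlety is verifying that the concatenation $\mathbf{b}$ is reduced; in the uses of this fact below the factors $w'$ and $w''$ involve disjoint collections of simple transpositions (or otherwise satisfy manifest length-additivity), so this is not a genuine obstacle.
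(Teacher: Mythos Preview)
Your approach---concatenate reduced words for $w'$ and $w''$, observe that no letter of a reduced word for $v$ can appear in the $w''$-portion, and conclude that the Billey sums over $\mathbf{b}$ and $\mathbf{b}'$ agree term by term---is exactly the argument the paper has in mind when it says the fact ``follows straightforwardly from the Billey formula.'' Your remark about needing length-additivity for the concatenation to be reduced is well taken and is satisfied in the paper's applications.

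There is one slip. You write that the edge case $v = s_i$ ``does not arise in the intended applications,'' but in fact the \emph{only} place the paper invokes Fact~\ref{fact:extra} is in the proof of Lemma~\ref{lemma:summands p si}, where $v = s_i$ is a single simple transposition (with $i \in [2,a_2]$) and $w'' = s_1$. In that situation the stated hypothesis ``$s_j < v$ implies $s_j \not\leq w''$'' is vacuous, and indeed the Fact as literally stated with a \emph{strict} inequality is false in general (e.g.\ take $v=w'=s_1$, $w''=s_2 s_1$, $w=s_1 s_2 s_1$: then $\sigma_{s_1}(w')=t_1-t_2$ but $\sigma_{s_1}(w)=t_1-t_3$). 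The evidently intended hypothesis is that $s_i \leq v$ (equivalently, $s_i$ lies in the support of $v$) implies $s_i \not\leq w''$; under this reading your argument goes through verbatim, and it is precisely this disjoint-support condition that holds in the application ($i \geq 2$, so $s_i$ is not in the support of $s_1$). So your proof is correct once you replace the strict inequality by a non-strict one and drop the erroneous parenthetical.
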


Following terminology in \cite{HarTym09}, we refer to an individual
summand of the expression in the right hand side of~\eqref{eq:billey-formula}, corresponding to
a single reduced subword $v=s_{b_{j_1}} s_{b_{j_2}} \cdots
s_{b_{j_{\ell(v)}}}$ of $w$, as a {\bf summand in Billey's formula}. 
In order to derive formulas for $p_v(w)$ where $p_v$ is a Hessenberg
Schubert class, we use the linear projection $\pi_{S^1}: \t^* \to \Lie(S^1)^*$
dual to the inclusion of our circle subgroup $S^1$ into $T$ given
by~\eqref{eq:def-circle}. More specifically, since the
diagram~\eqref{eq:comm diagram for Hess} commutes, we have  
\begin{equation}\label{eq:Billey's formula for pvw}
p_v(w) = \sum \pi_{S^1}(r(j_1, \mathbf{b})) \pi_{S^1}(r(j_2, \mathbf{b})) \cdots
\pi_{S^1}(r(j_{\ell(v)}, \mathbf{b})).
\end{equation}
We refer to the right hand side of the above equality as {\bf Billey's
  formula for} $p_v(w)$.  Recall $\pi_{S^1}(t_\ell - t_{k+1}) = (k+1-\ell)t$
for a positive root $t_\ell - t_{k+1}$ \cite[Section 5]{HarTym09}.

We also use the following.

\begin{definition}\label{defn:head} \textbf{(\cite[Definition 5.4]{HarTym09})}
Fix ${\mathcal{A}} \subseteq \{1,2,\ldots,n-1\}$.  Define
$\mathcal{H}_{\mathcal{A}}: \mathcal{A} 
\to \mathcal{A}$ by  
\[\head{{\mathcal{A}}}{j} = \textup{the maximal element in the maximal
  consecutive substring of ${\mathcal{A}}$ containing $j$}.\]
\end{definition}

\begin{definition}\label{defn:tail} \textbf{(\cite[Definition 5.5]{HarTym09})}
Fix ${\mathcal{A}} \subseteq \{1,2,\ldots,n-1\}$. Define $\mathcal{T}_{\mathcal{A}}: \mathcal{A}
\rightarrow \mathcal{A}$ by
\[\toe{{\mathcal{A}}}{j} = 
\textup{the minimal element in the maximal consecutive substring of
  ${\mathcal{A}}$ containing } j.\]
\end{definition}

We proceed to some preliminary computations.  Let $\mathbf{b} =
(b_1, \ldots, b_{\ell(w)})$ be a reduced word decomposition $w =
s_{b_1}s_{b_2} \cdots s_{b_{\ell(w)}}$ of $w$ and let $i$ be an index
appearing in $\mathbf{b}$, i.e. $b_\ell = i$ for some $1 \leq \ell
\leq \ell(w)$. Our first computation, Lemma~\ref{lemma:summands p si},
gives an expression for $\pi_{S^1}(r(\ell, \mathbf{b}))$ which shows
in particular that the value of $\pi_{S^1}(r(\ell, \mathbf{b}))$
depends only on the value of the index $b_\ell = i$ and not on its
location $\ell$ in the word $\mathbf{b}$. Note that if $v=s_i$ then
the summands in Billey's formula for $p_v(w) = p_{s_i}(w)$ are
precisely equal to $r(\ell, b)$ for each $\ell$ such that $b_\ell =
i$. Thus an equivalent formulation of the claim is that the summands
in Billey's formula for $p_{s_i}(w)$ are all equal. This is analogous
to a result in the Peterson case \cite[Lemma 5.2]{HarTym09} except
that in our situation, the form of the formulas depend on the index
$i$ as well as on the type of the fixed point $w$ in question.

\begin{lemma}\label{lemma:summands p si}
Let $w \in \Hess(h)^{S^1}$ and let 
$\mathbf{b} =
(b_1, \ldots, b_{\ell(w)})$ be the reduced word decomposition of $w$
chosen in Section~\ref{sec:upper triangularity}. 
Let $\mathcal{A}(w) = [a_1, a_2] \cup [a_3, a_4] \cup \cdots \cup
[a_{m-1}, a_m]$ be the associated subset of $w$ decomposed into
maximal consecutive substrings. Let $i \in
\{1,2,\ldots,n-1\}$.
\begin{enumerate} 
\item If $i \not \in \mathcal{A}(w)$, then each summand in Billey's
  formula for $p_{s_i}(w)$ is $0$. In particular, $p_{s_i}(w) = 0$. 
\item Suppose $i \in \mathcal{A}(w)$ and suppose one of the following conditions hold: 
\begin{itemize} 
\item $w$ is of Peterson type, or 
\item $w$ is $312$-type, or 
\item $w$ is $231$-type and $i \not \in [a_1, a_2]$. 
\end{itemize} 
Then each
  summand in Billey's formula for $p_{s_i}(w)$ is equal to 
\[
(i-\mathcal{T}_{\mathcal{A}(w)}(i)+1)t.
\]
\item Suppose $w$ is $231$-type and $i=1$. Then each summand in
  Billey's formula for $p_{s_i}(w)$ is equal to 
\[
a_2 t = \mathcal{H}_{\mathcal{A}(w)}(1) t. 
\]
\item Suppose $w$ if $231$-type and $i \in [2,a_2]$. Then each
  summand in Billey's formula for $p_{s_i}(w)$ is equal to 
\[
(i-\mathcal{T}_{\mathcal{A}(w)}(i))t = (i-1)t. 
\]
\end{enumerate} 
\end{lemma}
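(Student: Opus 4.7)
My plan is to prove Lemma~\ref{lemma:summands p si} by direct computation using Billey's formula together with the explicit reduced word decompositions of $w$ chosen in Section~\ref{sec:upper triangularity}. The chosen reduced word $\mathbf{b}$ for $w$ factorizes into blocks, one for each maximal consecutive substring $[a_{2r-1}, a_{2r}]$ of $\mathcal{A}(w)$: in the Peterson case each block is the standard reduced word for $w_{[a_{2r-1}, a_{2r}]}$, and in the 312- and 231-cases the first block is modified as in Lemma~\ref{lemma:nonPeterson reduced words}. The key structural observation is that simple transpositions $s_j, s_k$ from distinct blocks satisfy $|j-k| \geq 2$, so they commute and fix any root $t_i - t_{i+1}$ whose index $i$ lies in a third block. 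Hence, in the prefix computation $s_{b_1} \cdots s_{b_{\ell-1}}(t_i - t_{i+1})$ for an occurrence $b_\ell = i$, the contributions from blocks other than the one containing $i$ act trivially and may be ignored; this reduces each case to a one-block analysis.

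Case (1) is then immediate: by Lemma~\ref{lemma:Aw and transpositions}, $i \notin \mathcal{A}(w)$ forces $s_i$ not to appear in $\mathbf{b}$, so Billey's formula for $p_{s_i}(w)$ is an empty sum. For Case (2) I reduce to a Peterson-style block $w_{[a,b]}$ with $a = \mathcal{T}_{\mathcal{A}(w)}(i)$ and its standard reduced word $s_a (s_{a+1} s_a)(s_{a+2} s_{a+1} s_a) \cdots (s_b s_{b-1} \cdots s_a)$. The main computation rests on two identities: the telescoping relation $(s_k s_{k-1} \cdots s_{i+1})(t_i - t_{i+1}) = t_i - t_{k+1}$, and the fact that the longest element $w_{[a, k-1]}$ of $S_{[a,k]}$ sends $t_j \mapsto t_{a+k-j}$ for $j \in [a,k]$ and fixes everything else. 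Combining these, for each occurrence of $s_i$ in the factor $(s_k s_{k-1} \cdots s_a)$ I obtain $r(\ell,\mathbf{b}) = t_{a+k-i} - t_{k+1}$, and $\pi_{S^1}$ returns the $k$-independent value $(i - a + 1)t = (i - \mathcal{T}_{\mathcal{A}(w)}(i) + 1)t$. The 312-type first block is handled analogously because its reduced word has the form $w_{[1, a_2-1]} \cdot (s_{a_2} s_{a_2-1} \cdots s_2)$ (standard Peterson in the first $a_2 - 1$ factors, with a slightly truncated last factor), and the same two identities apply.

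For Cases (3) and (4), $w$ is of 231-type and $i$ lies in the first block, whose reduced word has the form
\[
w_{[2, a_2-1]} \cdot (s_{a_2} s_{a_2-1} \cdots s_2 s_1),
\]
where the first factor is the standard reduced word for the longest element of $S_{[2, a_2]}$. For Case (3), $s_1$ occurs only at the very end; telescoping gives $(s_{a_2} \cdots s_2)(t_1 - t_2) = t_1 - t_{a_2+1}$, and since $w_{[2, a_2-1]}$ permutes only $[2, a_2]$ and fixes both $t_1$ and $t_{a_2+1}$, I obtain $r = t_1 - t_{a_2+1}$ and therefore $\pi_{S^1}(r) = a_2 t = \mathcal{H}_{\mathcal{A}(w)}(1) t$. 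For Case (4) with $i \in [2, a_2]$, each occurrence of $s_i$ in the $k$-th factor (whether for $k \in [i, a_2-1]$ or $k = a_2$) produces, by the same pair of identities (now using that $w_{[2,k-1]}$ is longest in $S_{[2,k]}$), $r = t_{2+k-i} - t_{k+1}$, yielding $\pi_{S^1}(r) = (i - 1)t = (i - \mathcal{T}_{\mathcal{A}(w)}(i))t$ uniformly.

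The main obstacle is the careful bookkeeping for the 231-type first block, whose nonstandard form produces two distinct formulas (Cases (3) and (4)) depending on whether $i = 1$ or $i \in [2, a_2]$. The discrepancy with the Peterson and 312-type formula --- namely the disappearance of the ``$+1$'' offset in Case (4) and its replacement by the cruder $\mathcal{H}_{\mathcal{A}(w)}(1) t$ in Case (3) --- traces to the fact that the prefix $w_{[2, k-1]}$ governing the action in the 231 first block permutes only $[2, k]$ and in particular fixes $t_1$. Once the block reduction is established, verifying that all occurrences of $s_i$ within a given block produce the same value of $\pi_{S^1}(r(\ell,\mathbf{b}))$ is a routine application of the telescoping identity in each subcase.
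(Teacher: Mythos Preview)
Your argument is correct and follows essentially the same route as the paper's proof: reduce to a single block using commutativity between distinct maximal consecutive substrings, then compute $r(\ell,\mathbf{b})$ directly from the explicit reduced words. The only minor difference is in Case~(4): the paper observes that the $231$-type first block equals $w_{[2,a_2]} \cdot s_1$ and invokes Fact~\ref{fact:extra} to strip off the trailing $s_1$ (which does not affect Billey's formula for $s_i$ with $i>1$), thereby reducing to the Peterson computation for $w_{[2,a_2]}$, whereas you carry out the direct computation inside the $231$ first block; both yield $(i-1)t$.
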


\begin{proof}
  If $i$ does not occur in $\mathcal{A}(w)$ then each summand is $0$
  by Billey's formula for $\sigma_{s_i}(w)$, since $s_i \not < w$ and
  thus never appears in the reduced word decomposition of $w$.  For
  the next claim, the fact that each summand is equal to
  $(i-\mathcal{T}_{\mathcal{A}(w)}(i)+1)t$ for the listed cases
  follows from examination of the chosen reduced word decompositions
  of $w$ and an argument identical to that in
  \cite{HarTym09}. Thus it remains to check the cases in which the
  summand differs from the case of Peterson varieties.  First suppose
  $w$ is $231$-type and that $i=1$. From the choice of explicit
  reduced word decomposition for such $w$ given in~\eqref{eq:231 fixed
    point reduced word} and Billey's formula, it follows that each
  summand in Billey's formula for $\sigma_{s_1}(w)$ is equal to
\begin{equation}\label{eq:summand for i=1}
\begin{split}
   r_2 (r_3 r_2) \cdots (r_{a_2-1} r_{a_2-2} \cdots r_3 r_2) (r_{a_2}
   r_{a_2-1} \cdots r_2 \hsm (t_1 - t_2)) &=    r_2 (r_3 r_2) \cdots
   (r_{a_2-1} r_{a_2-2} \cdots r_3 r_2)(t_1 - t_{a_2+1}) \\
 & = t_1 - t_{a_2+1}
\end{split}
\end{equation}
since the reflection $r_j$ switches $t_j$ and $t_{j+1}$. Hence we have
$p_{s_1}(w) = \pi_{S^1}(t_1-t_{a_2+1}) = (a_2+1-1)t = a_2 t =
\mathcal{H}_{\mathcal{A}(w)}(1) t$. 
Now suppose $w$ is $231$-type and $i \in
[2,a_2]$.  The factor in the reduced word decomposition~\eqref{eq:231
  fixed point reduced word} corresponding to $[1=a_1, a_2]$ is
equal to $w_{[2,a_2]} \cdot s_1$. By Fact~\ref{fact:extra},
for $i >1$ the presence of the extra $s_1$ does not
affect the Billey computation, so each summand is equal to that 
for the Peterson type fixed point $w_{[2,a_2]}$ and 
hence is equal to 
\[
(i-\mathcal{T}_{\mathcal{A}(w)}(i))) = (i-1)t,
\]
as desired. 
\end{proof}

Our next lemma concerns the summands in Billey's formula for
$p_{\roll(w)}(w)$ for $w \in \Hess(h)^{S^1}$.

\begin{lemma}\label{lemma:billey summands} 
Let $w \in \Hess(h)^{S^1}$. 
\begin{itemize}
\item Suppose $w$ is $312$-type or $w$ is of
Peterson type that does not contain $321$. 
Then each summand for Billey's formula for $p_{\roll(w)}(w)$
 is equal to 
\[
\left( \prod_{i \in
    \mathcal{A}(w)} (i - \mathcal{T}_{\mathcal{A}(w)}(i) + 1) \right)
\cdot t^{\lvert \mathcal{A}(w) \rvert}.
\]
\item Suppose $w$ is $231$-type. Then each summand for
  Billey's formula for $p_{\roll(w)}(w)$ is equal to 
\[
\head{\mathcal{A}(w)}{1} \cdot \left( \prod_{i =
    2}^{\head{\mathcal{A}(w)}{1}} (i-1) \right) \cdot \left( \prod_{i
    \in \mathcal{A}(w) \setminus [\toe{\mathcal{A}(w)}{1},
    \head{\mathcal{A}(w)}{1}]} (i - \toe{\mathcal{A}(w)}(i) +1)
\right) t^{\lvert \mathcal{A}(w) \rvert}.
\]
\item Suppose $w$ is of Peterson type that contains
  $321$. Then each summand for Billey's formula for $p_{\roll(w)}(w)$
  is equal to 
\[
\left( \prod_{i \in
    \mathcal{A}(w)} (i - \mathcal{T}_{\mathcal{A}(w)}(i) + 1) \right)
\cdot t^{\lvert \mathcal{A}(w) \rvert + 1}.
\]
\end{itemize}
\end{lemma}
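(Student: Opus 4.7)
The plan is to apply Billey's formula~\eqref{eq:Billey's formula for pvw} for $p_{\roll(w)}(w)$ using the standard reduced word $\mathbf{b}$ of $w$ fixed in Section~\ref{sec:upper triangularity}. Each summand corresponds to a reduced subword of $\mathbf{b}$ that spells $\roll(w)$, and equals a product over its positions $j$ of the terms $\pi_{S^1}(r(j,\mathbf{b}))$. The key input is Lemma~\ref{lemma:summands p si}, which shows that $\pi_{S^1}(r(j,\mathbf{b}))$ depends only on the index $i=b_j$ and not on the position $j$. In particular it equals $(i-\toe{\mathcal{A}(w)}{i}+1)t$ in the generic case, with the only exceptions arising when $w$ is $231$-type and $i\in[1,a_2]$: there $s_1$ contributes $\head{\mathcal{A}(w)}{1}\,t$ and $s_i$ for $2\le i\le a_2$ contributes $(i-1)t$. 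Consequently every summand factors as a product of these single-index values and is entirely determined by the multiset of indices appearing in the chosen reduced subword.

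With this reduction in hand I would then proceed case by case using the explicit reduced words for $\roll(w)$ given in Lemmas~\ref{lemma:rolldowns for Petersons} and~\ref{lemma:rolldowns for nonPetersons}, reading off the multiset of indices and multiplying the corresponding single-index contributions. For Peterson-not-containing-$321$ and for $312$-type the multiset is exactly $\mathcal{A}(w)$, producing $\prod_{i\in\mathcal{A}(w)}(i-\toe{\mathcal{A}(w)}{i}+1)\cdot t^{\lvert\mathcal{A}(w)\rvert}$, which is the first formula. For $231$-type the multiset is again $\mathcal{A}(w)$, but the indices $i\in[1,a_2]$ contribute according to the $231$-type clauses of Lemma~\ref{lemma:summands p si}: the factor $\head{\mathcal{A}(w)}{1}$ from $s_1$ combines with $\prod_{i=2}^{\head{\mathcal{A}(w)}{1}}(i-1)$ from the remaining initial indices, leaving the generic product over $\mathcal{A}(w)\setminus[\toe{\mathcal{A}(w)}{1},\head{\mathcal{A}(w)}{1}]$; this yields the second formula. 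For Peterson-containing-$321$ the factor $s_1 s_2 s_1$ of $\roll(w)$ introduces one extra $s_1$, so the multiset is $\mathcal{A}(w)\cup\{1\}$, giving an additional factor of $t$ in the third formula.

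The main obstacle is verifying that \emph{every} reduced subword of $\mathbf{b}$ spelling $\roll(w)$ has the claimed multiset, so that every summand indeed realizes the stated value. In the first three cases this is automatic, because no simple reflection occurs twice in $\roll(w)$, so no braid relation $s_i s_{i+1} s_i = s_{i+1} s_i s_{i+1}$ is applicable and all reduced expressions for $\roll(w)$ necessarily share the same multiset. In the Peterson-containing-$321$ case, however, the block $s_1 s_2 s_1$ admits the braid $s_1 s_2 s_1 = s_2 s_1 s_2$, so a reduced subword of $\mathbf{b}$ with the alternative multiset $\mathcal{A}(w)\cup\{2\}$ and hence a different product is a priori possible. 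I would rule this out by a positional analysis inside $\mathbf{b}$: all occurrences of $s_1$ and $s_2$ lie in the initial $w_{[1,a_2]}$ block of $\mathbf{b}$, the last such occurrence is an $s_1$ with no subsequent $s_2$ in $\mathbf{b}$, and any $s_{j_\ell}$ with $j_\ell\ge a_3$ appears only after the entire initial block; threading these constraints shows that no reduced subword of $\mathbf{b}$ can realize the $\cdots s_2 s_1 s_2$ ending demanded by the alternative multiset. This positional bookkeeping is the technical core of the proof.
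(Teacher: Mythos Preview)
Your proposal is correct and follows essentially the same approach as the paper: reduce via Lemma~\ref{lemma:summands p si} to the observation that each factor $\pi_{S^1}(r(j,\mathbf{b}))$ depends only on the index $b_j$, so a summand is determined by the multiset of indices in the chosen reduced subword; then handle the multiplicity-free cases directly and, for the Peterson-containing-$321$ case, rule out the alternative multiset $\mathcal{A}(w)\cup\{2\}$ by a positional argument in $\mathbf{b}$. The only cosmetic difference is in that last positional argument: the paper observes that any reduced word with the $s_2 s_1 s_2$ multiset has both copies of $s_2$ to the right of the unique $s_{a_2}$, whereas $\mathbf{b}$ contains only one $s_2$ after its $s_{a_2}$; you instead note that $\mathbf{b}$'s final letter among $\{s_1,s_2\}$ is an $s_1$, while the alternative form forces the last such letter to be $s_2$. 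Both countings work.
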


\begin{proof}
  Before considering the separate cases we make a general observation.
  By Lemma~\ref{lemma:summands p si} and the discussion before
  Lemma~\ref{lemma:summands p si} we know that the summands in
  Billey's formula for $p_{s_i}(w)$ for $i \in \mathcal{A}(w)$ are
  exactly the terms $\pi_{S^1}(r(\ell,\mathbf{b}))$ for $\ell$ such
  that $b_\ell = i$. Suppose in addition that $w \in \Hess(h)^{S^1}$
  is such that 
$\roll(w)$ contains at most one
  simple transposition $s_i$ for each $i \in \{1,2,\ldots,n-1\}$,
  i.e., $\roll(w) = s_{i_1} s_{i_2} \cdots s_{i_{\ell(\roll(w))}}$ 
is a reduced word for $\roll(w)$ 
where all $i_k$ are distinct for $1 \leq k
\leq \ell(\roll(w))$. This implies that any subword of a reduced word
decomposition $\mathbf{b}$ of $w$ which is a reduced word of
$\roll(w)$ also must contain precisely one $s_{i_k}$ for each $1 \leq k
\leq \ell(\roll(w))$. From Billey's formula~\eqref{eq:Billey's formula
  for pvw} for
$p_{\roll(w)}(w)$ we
know that a summand is of the form 
\begin{equation}\label{eq:summand}
\pi_{S^1}(r(j_1, \mathbf{b})) \pi_{S^1}(r(j_2, \mathbf{b})) \cdots
\pi_{S^1}(r(j_{\ell(v)}, \mathbf{b}))
\end{equation}
where $s_{b_{j_1}} s_{b_{j_2}} \cdots s_{b_{j_{\ell(\roll(w))}}}$ is a
reduced word of $\roll(w)$. 
Since $\{b_{j_1},
\ldots, b_{j_{\ell(\roll(w))}}\} = \{i_1, i_2, \ldots,
i_{\ell(\roll(w))}\}$ for each such summand 
the quantity~\eqref{eq:summand} is equal to 
\begin{equation}\label{eq:summand in terms of p si}
\prod_{k=1}^{\ell(\roll(w))} p_{s_{i_k}}(w). 
\end{equation}

We now take cases. 
Suppose $w$ is not a Peterson-type that contains $321$. Then from the
explicit descriptions of $\roll(w)$ given in Section~\ref{sec:upper
  triangularity} it follows that $\roll(w)$ 
contains in its reduced word a single $s_i$ for each $i \in
\mathcal{A}(w)$. Thus we are in the situation described in the above
paragraph and the claims follow from the computations given in
Lemma~\ref{lemma:summands p si}.

Suppose $w$ is Peterson type and contains $321$.
Let 
$\mathbf{b}$ be the standard reduced word decomposition 
(cf.~\eqref{eq:standard reduced word}
and~\eqref{eq:wA-reduced-word}) of $w$. We claim
that the only reduced word decompositions
of $\roll(w)$
that occur as a subword of $\mathbf{b}$ are those which contain two
$s_1$'s, only one $s_2$, and precisely one $s_{j_\ell}$ for all other
$j_\ell$. 
Indeed let $\mathcal{A}(w) = [a_1, a_2] \cup [a_3, a_4]
\cup \cdots \cup [a_{m-1}, a_m]$ be the decomposition of
$\mathcal{A}(w)$ into maximal consecutive substrings. Recall $a_2 \geq
2$ and $a_1 = 1$ in this case. The rolldown
$\roll(w)$ is 
\[
(s_{a_m} s_{a_m-1} \cdots s_{a_{m-1}}) \cdots (s_{a_4} s_{a_4-1}
\cdots s_{a_3}) \cdot (s_{a_2} s_{a_2-1} \cdots s_1 s_2 s_1)
\]
and $w$ is 
\[
w = w_{[a_1, a_2]} w_{[a_3, a_4]} \cdots w_{[a_{m-1},a_m]}.
\]
Let $\ell>1$. There is only one reduced word decomposition of the
factor $s_{a_{\ell+1}} s_{a_{\ell+1}-1} \cdots s_{a_\ell}$ in
$\roll(w)$, so it remains to analyze the
subwords of $w_{[a_1,a_2]}$ which are reduced words of $s_{a_2}
s_{a_2-1} \cdots s_1 s_2 s_1$. Let $\mathbf{b}$ denote the standard
reduced word of $w_{[a_1, a_2]}$. Note that another valid reduced word
of $s_{a_2} s_{a_2-1} \cdots s_1 s_2 s_1$ is $s_{a_2} s_{a_2-1} \cdots
s_2 s_1 s_2$. Since $s_2$ does not commute with $s_1$, the
rightmost $s_2$ in the word $s_{a_2} s_{a_2-1} \cdots
s_2 s_1 s_2$ must appear to the right of the
$s_{a_2}$; in particular, there are two $s_2$'s to the right of the
$s_{a_2}$ in this word. Since there is only one $s_2$ appearing to the
right of the $s_{a_2}$ in $\mathbf{b}$
we conclude that the reduced word of $s_{a_2} s_{a_2-1}
\cdots s_1 s_2 s_1$ containing two copies of $s_2$ never appears as a
subword of $\mathbf{b}$. Hence the only subwords of $\mathbf{b}$
contributing to summands in Billey's formula for $p_{\roll(w)}(w)$
contain two $s_1$'s and one $s_2$, as claimed. 
Now since 
$j_1 = 1$ and $j_2=2$ and $p_{s_1}(w) = (1 -
\toe{\mathcal{A}(w)}{1}+1)t = (1-1+1)t = t$ by
Lemma~\ref{lemma:summands p si}, the claim follows. 
\end{proof}

We have just seen that all summands in Billey's formula for
$p_{\roll(w)}(w)$ are equal for all fixed points $w \in
\Hess(h)^{S^1}$. In order to finish the computation we must now
compute the number of summands which occur. 

\begin{lemma}\label{lemma:number of summands}
Let $w \in \Hess(h)^{S^1}$. 
\begin{itemize}
\item Suppose $w$ is of Peterson type that contains $321$. 
Then the number of summands in Billey's formula for $p_{\roll(w)}(w)$ is 
 $\mathcal{H}_{\mathcal{A}(w)}(1)-1$. 
\item Suppose $w$ is of Peterson type that does not contain $321$. Then the
 number
  of summands in Billey's formula for $p_{\roll(w)}(w)$ is $1$. 
\item Suppose $w$ is $312$-type. Then the number of summands in
  Billey's formula for $p_{\roll(w)}(w)$ is 
  $\mathcal{H}_{\mathcal{A}(w)}(1)-1$. 
\item Suppose $w$ is $231$-type. Then the number of summands in
  Billey's formula for $p_{\roll(w)}(w)$ is $1$. 
\end{itemize}
\end{lemma}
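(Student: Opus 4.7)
The plan is to count, in each of the four cases, the number of subsequences of the fixed reduced word decomposition $\mathbf{b}$ of $w$ (from Lemma~\ref{lemma:nonPeterson reduced words} together with the standard reduced word for Peterson-type) whose product, read left to right, is a reduced expression for $\roll(w)$; by Billey's formula~\eqref{eq:Billey's formula for pvw} this equals the number of summands contributing to $p_{\roll(w)}(w)$. The key structural observation is that $\mathbf{b}$ factors as a concatenation of blocks indexed by the maximal consecutive substrings of $\mathcal{A}(w)$: an initial block over $[a_1,a_2]$ (of the form $w_{[a_1,a_2]}$, $u_{[1,a_2]}$, or $v_{[1,a_2]}$ depending on the type of $w$), followed by standard reduced words $w_{[a_{2\ell-1},a_{2\ell}]}$ for $\ell\geq 2$. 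Correspondingly $\roll(w)$ factors as an initial piece (depending on type) times $\prod_{\ell\geq 2} s_{a_{2\ell}}s_{a_{2\ell}-1}\cdots s_{a_{2\ell-1}}$. Since distinct blocks use pairwise non-adjacent simple reflections, a subword of $\mathbf{b}$ is a reduced expression for $\roll(w)$ if and only if its restriction to each block is a reduced expression for the corresponding factor of $\roll(w)$; hence the total count factorizes as a product over blocks.

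For each non-initial block $[a_{2\ell-1},a_{2\ell}]$ with $\ell\geq 2$, I claim the contribution is exactly one. The target $s_{a_{2\ell}}s_{a_{2\ell}-1}\cdots s_{a_{2\ell-1}}$ is a single cycle and admits a unique reduced expression; moreover $s_{a_{2\ell}}$ appears exactly once in the standard reduced word for $w_{[a_{2\ell-1},a_{2\ell}]}$, as the leftmost letter of its last factor $(s_{a_{2\ell}}s_{a_{2\ell}-1}\cdots s_{a_{2\ell-1}})$. A descending induction on the index then forces each subsequent letter of the subword to occupy the next position of that last factor in order, giving exactly one matching subword. When $w$ is Peterson type not containing $321$, the initial block is structurally identical to the non-initial blocks and thus also contributes $1$, yielding the total count $1$ of the second bullet.

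For the remaining three types, the initial block requires more care. When $w$ is Peterson type containing $321$, the initial factor $s_{a_2}s_{a_2-1}\cdots s_3 s_1 s_2 s_1$ of $\roll(w)$ admits the reduced expression $s_1\cdot(s_{a_2}s_{a_2-1}\cdots s_2 s_1)$; the trailing descending run is forced into the last factor of $w_{[1,a_2]}$ by the same uniqueness argument, while the leading $s_1$ may be chosen from any of the $s_1$-endings of the preceding $a_2-1$ factors of the standard reduced word, giving $a_2-1=\mathcal{H}_{\mathcal{A}(w)}(1)-1$ subwords. The $312$-type case runs analogously using the reduced expression $s_1(s_{a_2}s_{a_2-1}\cdots s_3 s_2)$ and the structure of $u_{[1,a_2]}$ (whose $s_1$'s occur only at the ends of its first $a_2-1$ factors). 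The main obstacle is to show that no other reduced expression of the initial rolldown factor contributes additional subwords: alternatives are obtained by commuting the leading $s_1$ past some $s_j$ with $j\geq 3$ (producing, e.g., $s_{a_2}s_1 s_{a_2-1}\cdots$), or by applying the braid move $s_1 s_2 s_1=s_2 s_1 s_2$ (producing $\cdots s_3 s_2 s_1 s_2$); in every such variant the displaced $s_1$ (or the extra $s_2$) would have to appear in $\mathbf{b}$ strictly between the unique $s_{a_2}$-position and a subsequent required generator, where no such occurrence exists. Finally, for $w$ of $231$-type, the reduced word $v_{[1,a_2]}$ contains $s_1$ only as the rightmost letter of its last factor, and the initial rolldown factor $s_{a_2}s_{a_2-1}\cdots s_2 s_1$ is once again a cycle with unique reduced expression; the uniqueness argument forces the subword to be precisely that last factor, producing one subword and hence total count $1$.
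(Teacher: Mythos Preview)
Your proof is correct and follows essentially the same strategy as the paper's: both factor the count over the maximal consecutive substrings of $\mathcal{A}(w)$, observe that each non-initial block contributes exactly one subword (because the descending cycle $s_{a_{2\ell}}\cdots s_{a_{2\ell-1}}$ has a unique reduced expression and $s_{a_{2\ell}}$ occurs once in $w_{[a_{2\ell-1},a_{2\ell}]}$), and then analyze the initial block case by case. For the Peterson-with-$321$ and $312$-type cases you rewrite the initial rolldown factor as $s_1$ times a descending run, force the descending run into the last factor of the initial block via the unique $s_{a_2}$, and count the $a_2-1$ available positions for the leading $s_1$; you then rule out the braid-move variant $\cdots s_2 s_1 s_2$ and the internal-$s_1$ variants by noting the required extra letter does not occur after the $s_{a_2}$-position in $\mathbf{b}$---exactly as the paper does (the paper establishes the $s_2 s_1 s_2$ exclusion in the proof of the preceding lemma and cites it). Your treatment of the $231$-type and the Peterson-without-$321$ cases is likewise the same uniqueness argument the paper invokes.
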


\begin{proof}
We consider each case in turn. Suppose $w$ is of Peterson type
that contains $321$. 
Let $\mathcal{A}(w) = [a_1, a_2] \cup [a_3, a_4]
\cup \cdots \cup [a_{m-1}, a_m]$ be the decomposition of
$\mathcal{A}(w)$ into maximal consecutive substrings. Recall $a_2 \geq
2$ and $a_1 = 1$ in this case. The rolldown
$\roll(w)$ is 
\[
(s_{a_m} s_{a_m-1} \cdots s_{a_{m-1}}) \cdots (s_{a_4} s_{a_4-1}
\cdots s_{a_3}) \cdot (s_{a_2} s_{a_2-1} \cdots s_1 s_2 s_1)
\]
and $w$ is 
\[
w = w_{[a_1, a_2]} w_{[a_3, a_4]} \cdots w_{[a_{m-1},a_m]}.
\]
Let $\ell>1$. 
As observed in the proof of Lemma~\ref{lemma:billey summands} 
there is only one reduced word decomposition of
the factor $s_{a_{\ell+1}} s_{a_{\ell+1}-1} \cdots s_{a_\ell}$ in
$\roll(w)$. 
Moreover by examination it is evident that it appears only
once in the standard reduced word decomposition of the corresponding
$w_{[a_\ell, a_{\ell+1}]}$ factor in $w$. 
Hence in order to count the number of
ways $\roll(w)$ appears in $w$ it suffices to count the number of
subwords of the standard reduced word decomposition $\mathbf{b}$ of 
$w_{[a_1, a_2]}$ which are reduced subwords of 
$s_{a_2} s_{a_2-1} \cdots s_1 s_2 s_1$. 
We already saw in the proof of
Lemma~\ref{lemma:billey summands} that the reduced word $s_{a_2} s_{a_2-1} \cdots s_2
s_1s_2$ never appears in $\mathbf{b}$. 
On the other hand since $s_1$ commutes with any $s_k$ with $k\geq
3$, another reduced word decomposition of $s_{a_2} s_{a_2-1}
\cdots s_1 s_2 s_1$ is $s_1 s_{a_2} s_{a_2-1} \cdots s_2 s_1$. 
From examination of $\mathbf{b}$ 
it can be seen that the word $s_1 s_{a_2} s_{a_2-1} \cdots s_2 s_1$
appears as a subword in the standard reduced word of $w_{[a_1,a_2]}$
precisely $a_2-1 = \head{\mathcal{A}(w)}{1}-1$ times and that these are the only subwords of
$\mathbf{b}$ which equal $s_{a_2} s_{a_2-1}
\cdots s_1 s_2 s_1$. The claim follows.

Suppose $w$ is of Peterson type that does not contain $321$. Then the
rolldown $\roll(w)$ is the Peterson case rolldown so the claim follows
from explicit examination of the standard reduced word of $w$
(alternatively from \cite[Fact 4.5]{HarTym09}).

Suppose $w$ is $312$-type. Then the rolldown $\roll(w)$ is of the
form 
\[
\roll(w) = (s_{a_m} s_{a_m-1} \cdots s_{a_{m-1}}) \cdots (s_{a_4} s_{a_4-1}
\cdots s_{a_3}) \cdot (s_{a_2} s_{a_2-1} \cdots s_1 s_2)
\]
from Lemma~\ref{lemma:rolldowns for nonPetersons}. By an argument similar to the case of
Peterson type that contains $321$ it suffices to analyze only the factors
in both $\roll(w)$ and $w$ corresponding to the initial maximal
consecutive substring $[a_1, a_2]$. As above we have 
\[
s_{a_2} s_{a_2-1} \cdots s_1 s_2 = 
s_1 s_{a_2} s_{a_2-1} \cdots  s_2
\]
and again 
it follows from examination of the standard reduced word of $u_{[a_1,
  a_2]}$ that $s_1 s_{a_2} s_{a_2-1} \cdots  s_2$ appears precisely
$a_2-1 = \head{\mathcal{A}(w)}{1}-1$ times. 

Finally suppose $w$ is $231$-type. Then the rolldown $\roll(w)$
coincides with the Peterson case rolldown of $w_{\mathcal{A}(w)}$ and
the claim follows from examination of the reduced word
decomposition~\eqref{eq:231 fixed point reduced word}. 
\end{proof}

The following is immediate from Lemmas~\ref{lemma:billey summands}
and~\ref{lemma:number of summands}.

\begin{proposition}\label{proposition:prollw at w}
Let $w\in \Hess(h)^{S^1}$. 
 \begin{itemize}
  \item Suppose $w$ is of Peterson type that contains $321$. Then 
\[
p_{\roll(w)}(w) = (\mathcal{H}_{\mathcal{A}(w)}(1)-1) \left( \prod_{i \in
    \mathcal{A}(w)} (i - \mathcal{T}_{\mathcal{A}(w)}(i) + 1) \right)
\cdot t^{\lvert \mathcal{A}(w) \rvert +1}.
\]
 \item Suppose $w$ is of Peterson type that does not contain $321$. Then 
\[
p_{\roll(w)}(w) = \left( \prod_{i \in \mathcal{A}(w)} (i -
  \mathcal{T}_{\mathcal{A}(w)}(i) + 1) \right) t^{\lvert
  \mathcal{A}(w) \rvert}.
\]
\item Suppose $w$ is of type $312$. Then 
\[
p_{\roll(w)}(w) = (\mathcal{H}_{\mathcal{A}(w)}(1)-1) \left( \prod_{i \in
    \mathcal{A}(w)} (i - \mathcal{T}_{\mathcal{A}(w)}(i) + 1) \right)
\cdot t^{\lvert \mathcal{A}(w) \rvert}.
\]
\item Suppose $w$ is of type $231$. Then 
\[
p_{\roll(w)}(w) = \head{\mathcal{A}(w)}{1} \cdot \left( \prod_{i =
    2}^{\head{\mathcal{A}(w)}{1}} (i-1) \right) \cdot \left( \prod_{i
    \in \mathcal{A}(w) \setminus [\toe{\mathcal{A}(w)}{1},
    \head{\mathcal{A}(w)}{1}]} (i - \toe{\mathcal{A}(w)}(i) +1)
\right) t^{\lvert \mathcal{A}(w) \rvert}.
\]
  \end{itemize}
\end{proposition}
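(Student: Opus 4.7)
The plan is to apply Billey's formula~\eqref{eq:Billey's formula for pvw} to $p_{\roll(w)}(w)$, using the chosen reduced word decomposition $\mathbf{b}$ of $w$ from Section~\ref{sec:upper triangularity}. By definition of that formula, $p_{\roll(w)}(w)$ equals a sum over subwords of $\mathbf{b}$ that form reduced words for $\roll(w)$. The crucial point is that every such summand contributes the \emph{same} value, depending only on the type of $w$; this is precisely the content of Lemma~\ref{lemma:billey summands}. Consequently $p_{\roll(w)}(w)$ equals this common summand value multiplied by the total number of reduced subwords equal to $\roll(w)$, which is the quantity computed in Lemma~\ref{lemma:number of summands}.

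I would then proceed case-by-case through the four possibilities for $w$ (Peterson type containing $321$, Peterson type not containing $321$, $312$-type, and $231$-type). For each case I would read off the common summand value from Lemma~\ref{lemma:billey summands}, read off the number of summands from Lemma~\ref{lemma:number of summands}, and multiply. For example, in the $312$-type case each summand equals $\left(\prod_{i \in \mathcal{A}(w)}(i-\toe{\mathcal{A}(w)}{i}+1)\right)t^{\lvert \mathcal{A}(w)\rvert}$ while the number of reduced subwords is $\head{\mathcal{A}(w)}{1}-1$, and their product is exactly the formula claimed; the other three cases follow in an entirely analogous fashion by matching factors. In the two cases where $\mathcal{A}(w)$ may be empty (namely when $w$ is of Peterson type not containing $321$), both the product and the count degenerate trivially to yield $p_{\roll(w)}(w) = 1$ when $w$ is the identity, consistent with the stated formula.

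Since all of the substantive combinatorial work has already been absorbed into the two preceding lemmas, I do not expect any genuine obstacle here; the proof is a short verification reducing to simple multiplication. As an immediate corollary, each factor $(i-\toe{\mathcal{A}(w)}{i}+1)$ is a positive integer, and in the cases where an additional factor of $\head{\mathcal{A}(w)}{1}-1$ appears (namely the Peterson-type-containing-$321$ and $312$-type cases) we have $\head{\mathcal{A}(w)}{1} \geq 2$ by Lemma~\ref{lemma:contains 1 and 2}, so this factor is also strictly positive; similarly in the $231$-type case one has $\head{\mathcal{A}(w)}{1} \geq 2$ and each $(i-1) \geq 1$ for $i \in [2,\head{\mathcal{A}(w)}{1}]$. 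Hence $p_{\roll(w)}(w) \neq 0$ in every case, which establishes Proposition~\ref{proposition:prollw at w nonzero} as a by-product of the explicit formulae, and hence (combined with Proposition~\ref{proposition:upper vanishing}) completes the proof of Theorem~\ref{theorem:perfect}.
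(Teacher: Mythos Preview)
Your proposal is correct and follows exactly the paper's approach: the proposition is stated to be immediate from Lemmas~\ref{lemma:billey summands} and~\ref{lemma:number of summands}, and you have spelled out precisely that multiplication of the common summand value by the number of summands in each of the four cases. Your additional remarks about positivity and the deduction of Proposition~\ref{proposition:prollw at w nonzero} also match the paper's subsequent argument.
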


The proofs of the main results are now immediate. 

\begin{proof}[Proof of Proposition~\ref{proposition:prollw at w
    nonzero}]
Let $w \in \Hess(h)^{S^1}$. From the explicit formulas given in
Proposition~\ref{proposition:prollw at w} it follows that
$p_{\roll(w)}(w) \neq 0$ for all possible types of fixed points $w$. 
\end{proof}

\begin{proof}[Proof of Theorem~\ref{theorem:perfect}]
Since both~\eqref{eq:prollw at w nonzero} and~\eqref{eq:upper vanishing} are satisfied for all $w, w'
\in \Hess(h)^{S^1}$ by Propositions~\ref{proposition:upper vanishing}
and~\ref{proposition:prollw at w nonzero} respectively, the result
follows. 
\end{proof}

\section{Open questions}\label{sec:questions}

This manuscript raises more questions than it answers. We close by
mentioning some of them.

\begin{question}
For $n \geq 4$, Theorem~\ref{theorem:perfect} shows that for the case when $N:\C^n \to \C^n$ is the
principal nilpotent operator and $h$ is the $334$-type Hessenberg
function, the dimension pair algorithm produces a set of Hessenberg
Schubert classes $\{p_{\roll(w)}\}_{w \in \Hess(N,h)^{S^1}}$ which are
poset-upper-triangular and hence 
form a $H^*_{S^1}(\pt)$-module basis for $H^*_{S^1}(\Hess(N,h))$. 
\begin{enumerate} 
\item What are other examples of $N$ and $h$ such that
  the conclusion of Lemma~\ref{lemma:injective} holds
  (cf. Remark~\ref{remark:injective})? 
\item What are other examples of $N$ and $h$ for which the dimension
  pair algorithm produces a successful outcome of Betti poset pinball  which is also 
  poset-upper-triangular? Are there necessary and sufficient
  conditions on $N$ and $h$ that guarantee poset-upper-triangularity? 
\item What are other examples of $N$ and $h$ for which the dimension
  pair algorithm produces a successful outcome of Betti poset pinball  which
  corresponds to a linearly independent set of classes and hence a
  module basis? 
Are there necessary and sufficient conditions on $N$ and $h$ that
guarantee this? 
\end{enumerate} 
\end{question}

\begin{question}
In \cite{HarTym09} the explicit module basis consisting of Peterson
Schubert classes is used to derive a \textbf{manifestly positive Monk
  formula} in the $S^1$-equivariant cohomology of Peterson varieties.
Preliminary investigation suggests that an analogous Monk formula for
the $334$-type Hessenberg varieties, using the module basis of
Hessenberg Schubert classes derived in this manuscript, would be
computationally much more complex. Thus, we may ask the following. 
\begin{enumerate} 
\item Does there exist 
a combinatorially elegant or computationally effective Monk
formula for the $334$-type Hessenberg varieties? 
\item Can such a Monk formula be further generalized to a larger
  family of regular nilpotent Hessenberg varieties? For instance, 
can our techniques be generalized to give new insights to the equivariant Schubert calculus of the full flag variety $\Flags(\C^n)$ (which is an example of a regular nilpotent Hessenberg variety)? 
\item In \cite{BayHar10a} the Monk formula for Peterson varieties is
  used to derive a \textbf{Giambelli formula}. Does there also exist
  a combinatorially elegant and/or computationally effective Giambelli
  formula for other cases of regular nilpotent Hessenberg varieties? 
\end{enumerate} 
\end{question}

\def\cprime{$'$}

\end{document}